\pgfplotsset{compat=1.14}
\newcommand{\slant}[2]{{\raisebox{.2em}{$#1$}/\raisebox{-.2em}{$#2$}}}
\newcommand{\al}{{\alpha}}
\newcommand{\Z}{{\mathbb{Z}}}
\newcommand{\C}{{\mathbb{C}}}
\newcommand{\N}{{\mathbb{N}}}
\renewcommand{\k}{{\mathbf{k}}}
\renewcommand{\sl}{{\mathfrak{sl}}}
\newcommand{\fg}{{\mathfrak{g}}}
\newcommand{\ft}{{\mathfrak{t}}}
\newcommand{\fu}{{\mathfrak{u}}}
\newcommand{\cD}{{\mathcal{D}}}
\newcommand{\cN}{{\mathcal{N}}}
\newcommand{\cO}{{\mathcal{O}}}
\newcommand{\Ad}{{\operatorname{Ad}}}
\newcommand{\Int}{{\operatorname{Int}}}
\newcommand{\ad}{{\operatorname{ad}}}
\newcommand{\Lie}{{\operatorname{Lie}}}
\newcommand{\GL}{{\operatorname{GL}}}
\newcommand{\SL}{{\operatorname{SL}}}
\newcommand{\Gad}{G_{\operatorname{ad}}}
\newcommand{\id}{{\operatorname{id}}}
\newcommand{\Hom}{{\operatorname{Hom}}}
\newcommand{\im}{{\operatorname{im}}}
\newcommand{\chara}{{\operatorname{char}}}
\renewcommand{\leq}{\leqslant}
\renewcommand{\geq}{\geqslant}
\newcommand{\btri}{\blacktriangle}
\newcommand{\wtri}{\vartriangle}
\newcommand{\ra}{\rightarrow}
\newcommand{\lra}{\longrightarrow}
\newcommand{\+}{\times}
\newcommand{\inv}[1]{\ensuremath{#1^{-1}}}
\newtheorem{thm}{Theorem}[section]
\newtheorem{cor}[thm]{Corollary}
\newtheorem{lem}[thm]{Lemma}
\newtheorem{prop}[thm]{Proposition}
\newtheorem{conj}[thm]{Conjecture}
\theoremstyle{definition}
\newtheorem{defn}[thm]{Definition}
\theoremstyle{remark}
\newtheorem{rem}[thm]{Remark}
\numberwithin{equation}{section}
\begin{document}

\title[nilpotent pieces in bad characteristic]{On the computation of the nilpotent pieces in bad characteristic for algebraic groups of type $G_2$, $F_4$, and $E_6$}
\author{Laura Voggesberger}
\address{Ruhr-Universität Bochum, Fakultät für Mathematik, Postfach 69, 44780 Bochum, Germany}
\email{laura.voggesberger@rub.de}
\keywords{algebraic groups, nilpotent elements, Lie algebras}
\begin{abstract}
Let $G$ be a connected reductive algebraic group over an algebraically closed field $\k$, and let $\Lie(G)$
be its associated Lie algebra. In his series of papers on unipotent elements in small characteristic, Lusztig defined a partition of the unipotent variety of
$G$. This partition is very useful when working with representations of $G$. Equivalently, one can
consider certain subsets of the nilpotent variety of $\fg$ called pieces. This approach appears in
Lusztig’s article from 2011.
The pieces for the exceptional groups of type $G_2, F_4, E_6, E_7$, and $E_8$ in bad characteristic have not
yet been determined. 
This article presents a solution, relying on computational techniques, to this problem for groups of type $G_2$, $F_4$, and $E_6$.
\end{abstract}
\maketitle
\section{Introduction}
Let $G$ be a connected reductive algebraic group over an algebraically closed field $\k$. There has been a lot of work on both the unipotent orbits of the conjugation action of $G$ on itself and the nilpotent orbits given by the adjoint action of $G$, when $G$ is simple, on its Lie algebra $\Lie(G)=\fg$. A full list of these orbits can be found, for example, in the book of Liebeck and Seitz, \cite{LiebeckSeitz}. One notes that the parametrisation of the orbits is different in certain characteristics: if $\chara(\k)=p$, where $p$ is a prime number, we say that $p$ is \textit{bad} for a simple group $G$ if $p=2$ and the root system of $G$ is not of type $A_n$, if $p=3$ and $G$ has type $G_2, F_4, E_6$ or $E_7$, and if $p=5$ and $G$ is of type $E_8$. In the case where $p$ is a bad prime, the number and structure of the nilpotent orbits may differ from the orbits found in other (\textit{good}) characteristic, for instance $\chara(k)=0$. In his series of papers on unipotent elements in small characteristic \cite{Lusztig1},\cite{Lusztig2},\cite{Lusztig3}, and \cite{Lusztig4}, Lusztig defines a partition of the unipotent variety and in \cite{Lusztig3} of the nilpotent variety into so-called \textit{pieces}. The pieces are parametrized by the orbits in good characteristic and were explicitly computed for $G$ of classical type, i.e. $A_n,B_n,C_n$ and $D_n$ in \cite{Lusztig3}.

Interestingly, there exist different definitions of partitions of the nilpotent variety. In \cite{Hesselink}, Hesselink defines a stratification of the nilpotent variety. Clarke and Premet define their own nilpotent pieces in \cite{ClarkePremet} and show that this leads to the same stratification as proposed by Hesselink. In \cite{Xue14}, Xue computes nilpotent pieces in $\fg^*$ for groups of type $F_4$ and $G_2$ using the definition for the nilpotent pieces in $\fg^*$ proposed by Clarke and Premet in \cite{ClarkePremet}. In \cite{Xue17}, Xue describes the Springer correspondence for the types $G_2$ and $F_4$ and uses it to compute nilpotent orbit representatives in $\fg$. We cannot find the nilpotent pieces from $\fg^*$ under this correspondence, as they are computed by using the definition by Clarke and Premet.

One would hope that the definitions of the nilpotent pieces as given by Lusztig and Clarke--Premet lead to the same object, and this is indeed the case for algebraic groups of classical type. We expect the equality to hold in the exceptional cases as well.

Additionally, we do not yet know if the nilpotent pieces (defined by Lusztig) form a partition of the nilpotent variety in these cases. In this paper we are able to prove with the help of computations done in {\sc Magma},\cite{Magma}, that this is the case for $p=2,3$ in $G$ of type $G_2,F_4$ and $E_6$, resulting in the following theorem.

In this paper we work with the definition of the nilpotent pieces as proposed by Lusztig unless otherwise specified.

\begin{thm}[Nilpotent pieces in $G_2,F_4$, and $E_6$]\label{main_theorem}
Let $G$ be a simple algebraic group of type $G_2$, $F_4$ or $E_6$ over an algebraically closed field $\k$ with $\chara(\k)=2$ or $3$.
Let $\tilde{G}$ be of the same type as $G$ in good characteristic.
We can choose a list of representatives $x_i$, $i=1,\ldots,m$ for the nilpotent orbits in the Lie algerba $\fg$ of $G$ such that
\begin{enumerate}
	\item $x_i=\sum_{\al\in\Phi^+}\lambda_{i,\al}e_{\al},\quad \lambda_{i,\al}\in\{0,1\},\quad \langle e_{\al}\rangle=\fg_{\al}$, and
	\item $\tilde{x}_i=\sum_{\al\in\Phi^+}\tilde{\lambda}_{i,\al}\tilde{e}_{\al}$ in good characteristic such that\\
	 $\tilde{\lambda}_{i,\al}=0\Leftrightarrow\lambda_{i,\al}=0$ and $\tilde{\lambda}_{i,\al}=1\Leftrightarrow\lambda_{i,\al}=1$.
\end{enumerate}
Furthermore, let $\tilde{\cO}_{\delta}$ be the nilpotent orbit in good characteristic, described by a weighted Dynkin diagram $\delta$.\\
Then the nilpotent piece with respect to the weighted Dynkin diagram $\delta$ and the group $G$ is given by
\begin{align*}
\cN_{\fg}^{\btri_{\delta}}=\bigsqcup_{\tilde{x}_i\in\tilde{\cO}_{\delta}}\cO_{x_i}.
\end{align*}
In particular, the nilpotent pieces $\cN_{\fg}^{\btri_{\delta}}$ form a partition of $\cN_{\fg}$ and are in bijection with the nilpotent orbits in good characteristic.
\end{thm}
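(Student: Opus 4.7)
The plan is to proceed entirely computationally in {\sc Magma}, handling each group and each bad prime separately, and verifying the theorem one weighted Dynkin diagram $\delta$ at a time.

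First I would fix a Chevalley basis of $\fg$ and a parallel basis of $\tilde{\fg}$, so that the structure constants are the integral Chevalley ones reduced mod $p$ on one side and evaluated in characteristic $0$ (or any good characteristic) on the other. Using the Bala--Carter classification in good characteristic together with the Liebeck--Seitz tables in bad characteristic, I would build matched lists $(x_i,\tilde{x}_i)$ satisfying conditions (1)--(2): each representative is a $\{0,1\}$-combination of positive root vectors, and a root vector appears in $x_i$ if and only if it appears in $\tilde{x}_i$. Where a published bad-characteristic representative is not already in this normal form, I would search its $G(\k)$-orbit for a conjugate that is; since the candidate support sets are subsets of $\Phi^+$, this is a finite search, feasible by a Weyl-group sweep combined with root-subgroup conjugation.

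Next, for each $\delta$, I would compute $\cN_\fg^{\btri_\delta}$ directly from Lusztig's inductive definition. Concretely: choose a cocharacter $\lambda\colon\k^\times\to G$ realising $\delta$, form the grading $\fg=\bigoplus_n\fg(n)$ and the parabolic $P=P(\lambda)$ with Levi $L=Z_G(\lambda)$, and then implement Lusztig's prescription for the subset of $\fg(\geq 2)$ cut out at each stage. The building blocks (intersecting weight spaces, computing $L$-orbit representatives on $\fg(2)$, passing to smaller Levis, and iterating) all have direct analogues in {\sc Magma} once $\fg$ is represented over $\F_p$. Certification of the claimed equality $\cN_\fg^{\btri_\delta}=\bigsqcup_{\tilde{x}_i\in\tilde{\cO}_\delta}\cO_{x_i}$ then splits into two checks. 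On one hand, for each $x_i$ with $\tilde{x}_i\in\tilde{\cO}_\delta$ I would exhibit a $G$-conjugate of $x_i$ lying inside the computed piece. On the other hand, for every representative of every $L$-orbit appearing in the piece I would recognise it, via orbit invariants (centraliser dimension in $G$, Jordan type on a faithful low-dimensional module, nilpotency class), as one of the listed $x_i$. Summing $\dim\cO_{x_i}$ over $i$ reproduces $\dim\cN_\fg$, and the partition statement follows because the bad-characteristic nilpotent orbits are disjoint and cover $\cN_\fg$.

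The hard part will be the faithful implementation of Lusztig's iterative definition in bad characteristic, rather than any single algebraic step. The recursion in \cite{Lusztig3} passes between cocharacters of successive Levi subgroups and their weight-space decompositions, and it is essential to respect the precise prescription there instead of the good-characteristic shortcut; subtleties such as non-smooth stabilisers and the possibility that several bad-characteristic orbits correspond to a single good-characteristic one must be tracked carefully. A secondary obstacle is orbit recognition for $F_4$ and $E_6$ in characteristic $2$ and $3$: centraliser dimensions alone can fail to distinguish bad-characteristic orbits, so supplementary invariants (e.g.\ the action on the reductive quotient of the centraliser, or Jordan type on several fundamental modules) must be precomputed and matched against the Liebeck--Seitz tables before the final equality of sets can be asserted.
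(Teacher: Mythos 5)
Your proposal is computational in spirit, like the paper, but the central step — \enquote{compute $\cN_{\fg}^{\btri_\delta}$ directly from Lusztig's inductive definition\ldots passing to smaller Levis, and iterating} — is not the definition the theorem is about, and this is a genuine gap. The paper works with Lusztig's definition from \cite[A.6]{Lusztig3}, which is \emph{non}-recursive: $\cN_{\fg}^{\btri_\delta}$ is assembled from the sets $\sigma^{\wtri}=\fg_2^{\delta!}\oplus\fg_{\geq 3}^{\wtri}$, where $\fg_2^{\delta!}=\{x\in\fg_2^{\delta}\mid G_x\subseteq G_{\geq 0}^{\delta}\}$ is a direct stabiliser condition. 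The Levi-descent recursion you describe is the Clarke--Premet/Hesselink prescription (the CP-pieces), and the paper explicitly flags, as an open issue it does \emph{not} assume, that Lusztig's pieces and the CP-pieces are not yet known to coincide for exceptional groups in bad characteristic. So as written, your algorithm would compute the CP-pieces, not Lusztig's $\cN_{\fg}^{\btri_\delta}$; proving their agreement is one of the \emph{consequences} of the paper's theorem (via Remark~\ref{rem_CP-pieces}), not something you can presuppose. To compute Lusztig's pieces you need a concrete, finite procedure to certify the stabiliser condition $G_x\subseteq G_{\geq 0}^{\delta}$ over an algebraically closed field, and the proposal supplies none.

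The paper's actual route is orbit-by-orbit rather than piece-by-piece: fix a representative $x$ of each nilpotent $G$-orbit, and for each $\delta$ decide whether some $\Ad(g)(x)$ lands in $\fg_{\geq 2}^{\delta}$ with $[\Ad(g)(x)]_{\fg_2^{\delta}}\in\fg_2^{\delta!}$. The crux is Section~\ref{computation}: writing $g$ in Bruhat form $u'tn_wu$, reducing to a finite sweep over a right transversal of $W_0^{\delta}$ in $W$ (Lemmas~\ref{checkg2d}, \ref{nw_u}, \ref{nw.u.x} and the Proposition that follows), and solving the resulting polynomial membership/centraliser conditions by Gröbner bases over a function field. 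These reductions are precisely what makes the search finite and tractable, and your proposal has no substitute for them. The diagonal cases are handled theoretically by Lemma~\ref{diagcase} (via Liebeck--Seitz) and the regular piece by Proposition~\ref{regpiece}, so the Gröbner computations only need to cover the off-diagonal pairs. By contrast, your plan to recognise orbits \enquote{via orbit invariants (centraliser dimension in $G$, Jordan type\ldots)} never enters the paper's argument: the paper never needs to identify which orbit a computed element lies in, because it starts from a fixed orbit representative.

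Finally, your closing remark that \enquote{the partition statement follows because the bad-characteristic nilpotent orbits are disjoint and cover $\cN_\fg$} is not enough. Orbits are always disjoint; what must be shown is that no orbit lies in two distinct pieces and that every orbit lies in at least one. A dimension count over the union does not rule out an orbit occurring in two pieces. In the paper this disjointness-and-covering is read off from the explicit tables: the computation assigns each orbit to exactly one weighted Dynkin diagram. If you keep a top-down approach, you must separately verify that the computed pieces are pairwise disjoint as sets of orbits.
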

In order to understand and prove Theorem \ref{main_theorem}, we will give a short introduction to the definition of the nilpotent pieces in section \ref{introduction}. This will be followed by some auxiliary results on pieces and the sets associated to them. In section \ref{computation} we will finally introduce a computational approach where we describe how to explicitly compute the nilpotent orbits contained in each piece before stating our results in section \ref{results}. While a similar statement has yet to be proved for groups of type $E_7$ and $E_8$, we expect similar results to hold:
\begin{conj}\label{conjectureA}
The results stated in Theorem \ref{main_theorem} should hold for simple algebraic groups of type $E_7$ and $E_8$ in bad characteristic.
\end{conj}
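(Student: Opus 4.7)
The plan is to replicate, in the setting of $E_7$ and $E_8$, the computational strategy that established Theorem \ref{main_theorem} for $G_2$, $F_4$, and $E_6$. The structural ingredients of that argument — a list of nilpotent orbit representatives in $\fg$ whose supports in a Chevalley basis match representatives in good characteristic, together with the pieces computed from their weighted Dynkin diagrams — are in principle available for $E_7$ and $E_8$ as well, so the bulk of the work is in extending the {\sc Magma} implementation of section \ref{computation} to handle the larger root systems and, for $E_8$, the additional bad prime $p=5$.

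Concretely, I would proceed in four stages. First, for each pair $(G, p)$ with $G$ of type $E_7$ or $E_8$ and $p$ bad, fix a Chevalley basis $\{e_\al\}_{\al \in \Phi^+}$ of $\fg$ and assemble the algebraic group data (root datum, parabolic subgroups, cocharacters) needed by the routines of section \ref{computation}. Second, starting from a tabulated set of orbit representatives such as those in \cite{LiebeckSeitz}, rewrite each nilpotent orbit representative in the form $x_i = \sum_{\al\in\Phi^+}\lambda_{i,\al}e_\al$ with $\lambda_{i,\al}\in\{0,1\}$ and simultaneously exhibit a good-characteristic representative $\tilde x_i$ with identical support, fulfilling conditions (1) and (2) of Theorem \ref{main_theorem}. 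Third, for every weighted Dynkin diagram $\delta$, compute the nilpotent piece $\cN_{\fg}^{\btri_\delta}$ and determine which orbits $\cO_{x_i}$ it contains. Finally, check that the pieces are pairwise disjoint and cover $\cN_\fg$, and verify that the bijection with good-characteristic orbits is the expected one.

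The hard part will be computational scale rather than new conceptual content. The Lie algebras of $E_7$ and $E_8$ have dimensions $133$ and $248$, with $63$ and $120$ positive roots respectively, and the number of nilpotent orbits grows accordingly; the piece-defining data involve intersections with subspaces cut out by cocharacters and images under parabolic subgroups, and naive implementations blow up quickly in rank. A secondary issue is the selection of the compatible $\{0,1\}$-representatives in (1) and (2): in low rank this can be done essentially by inspection, but for $E_7$ and $E_8$ the orbits whose Jordan-type or component group changes upon reduction mod $p$ — especially the distinguished ones and the ones that split or merge — will require careful case-by-case bookkeeping. The additional prime $p=5$ for $E_8$ also merits attention, since several routines in section \ref{computation} are tuned to the small-prime phenomena at $p=2,3$ and may need to be revisited. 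Beyond these implementation and memory-management hurdles, the logical structure of the proof should mirror that of Theorem \ref{main_theorem} step for step.
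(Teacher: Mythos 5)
This statement is labeled as a conjecture in the paper, and the paper offers no proof of it; indeed, the whole point of stating it as a conjecture is that the author has not carried out the computations for $E_7$ and $E_8$, presumably because of the scale issues you yourself identify. Your submission is therefore not a proof but a research plan: it describes the four stages one would execute (assemble root data, normalise orbit representatives to $\{0,1\}$-support matching good characteristic, run the piece-membership algorithm for each weighted Dynkin diagram, verify disjointness and coverage), and it correctly flags the main obstacles (dimensions $133$ and $248$, $63$ and $120$ positive roots, orbits that split in bad characteristic, the extra prime $p=5$ for $E_8$). All of this is a sensible and faithful reading of Section \ref{computation}, but none of it constitutes a verification. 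Until the Gröbner-basis computations of Section \ref{computation} are actually run to completion for every orbit and every weighted Dynkin diagram of $E_7$ and $E_8$ in each bad characteristic — or until some genuinely new structural argument replaces them — the conjecture remains open, and "the logical structure should mirror Theorem \ref{main_theorem} step for step" is an expectation, not an argument. In short: you have correctly identified \emph{how} one would try to prove this, but you have not proved it, and neither has the paper.
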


 We hope that these results can provide a deeper understanding of the nilpotent variety and help to study representations of algebraic groups. For instance, knowledge of the nilpotent pieces should help to work out specifics for generalised Gelfand--Graev representations in small characteristics, see \cite{Geck20}. In this paper, Geck describes a way to define generalised Gelfand--Graev representations in small characteristic. This relies heavily on a linear map $\lambda$ being in ``sufficiently general position'' which can be checked with the knowledge of the nilpotent pieces.
\section{Preliminaries}\label{introduction}
Let $\k$ be an algebraically closed field with char$(\k)=p\geq 0$.
Let $G$ be a connected reductive algebraic group over $\k$ with Lie algebra $\fg$. We fix a maximal torus $T\subseteq G$ and denote by $\Phi\subseteq X(T)$ the corresponding root system consisting of characters of $T$. For the basics see also \cite[Chapters 3 and 6-9]{MalleTesterman}.
\subsection{Root systems and algebraic groups}
 Let $\Pi\subseteq \Phi$ be the simple roots with respect to a Borel subgroup $T\subseteq B$ of $G$. Every root in $\Phi$ can be written either as a non-negative or non-positive integral linear combination of the simple roots in $\Pi$. Therefore we can define
\begin{equation*}
\medmath{
\Phi^+:=\left.\left\{\alpha=\sum_{\beta\in\Pi} c_{\beta}\beta\in\Phi~\right|~ c_{\beta}\in\Z_{\geq 0}\right\},\quad
\Phi^-:=\left.\left\{\alpha=\sum_{\beta\in\Pi} c_{\beta}\beta\in\Phi~\right|~c_{\beta}\in\Z_{\leq 0}\right\}.
}
\end{equation*}

The algebraic group $G$ acts on its Lie algebra $\fg$ via the adjoint map $\Ad: G\ra\GL(\fg)$.
The root system $\Phi$ of $G$ can be characterised by its Dynkin diagram.

\begin{center}
\begin{figure}[h]
\begin{tikzpicture}[scale=.3]
	\draw (-0.5,0) node {$A_n$};
    \draw[thick] (1 cm,0) circle (.2cm);
    \draw[thick] (2.5 cm,0) circle (.2cm);
    \draw[thick] (5 cm,0) circle (.2cm);
    \draw[thick] (6.5 cm,0) circle (.2cm);
    
    \draw[thick] (1.2 cm,0) -- +(1.1 cm,0);
    \draw[thick,dotted] (2.7 cm,0)--+(2.1cm,0);
    \draw[thick] (5.2 cm,0) -- +(1.1 cm,0);
    \draw (10.5,0) node {$B_n$};
    \draw[thick] (12 cm,0) circle (.2cm);
    \draw[thick] (13.5 cm,0) circle (.2cm);
    \draw[thick] (16 cm,0) circle (.2cm);
    \draw[thick] (17.5 cm,0) circle (.2cm);
    
    \draw[thick] (12.2 cm,0) -- +(1.1 cm,0);
    \draw[thick,dotted] (13.7 cm,0)--+(2.1cm,0);
    \draw[thick] (16.2 cm,0.1) -- +(1.1 cm,0);
    \draw[thick] (16.2 cm,-0.1) -- +(1.1 cm,0);
    \draw[thick] (16.6cm, 0.2cm) --(16.95 cm,0)--(16.6cm, -0.2cm);
    \draw (21.5,0) node {$C_n$};
    \draw[thick] (23 cm,0) circle (.2cm);
    \draw[thick] (24.5 cm,0) circle (.2cm);
    \draw[thick] (27 cm,0) circle (.2cm);
    \draw[thick] (28.5 cm,0) circle (.2cm);
    
    \draw[thick] (23.2 cm,0) -- +(1.1 cm,0);
    \draw[thick,dotted] (24.7 cm,0)--+(2.1cm,0);
    \draw[thick] (27.2 cm,0.1) -- +(1.1 cm,0);
    \draw[thick] (27.2 cm,-0.1) -- +(1.1 cm,0);
    \draw[thick] (27.95 cm,0.2)--(27.6cm, 0cm) --(27.95 cm,-0.2);
    \draw (-1.5,-4) node {$D_n$};
    \draw[thick] (0 cm,-4) circle (.2cm);
    \draw[thick] (1.5 cm,-4) circle (.2cm);
    \draw[thick] (4 cm,-4) circle (.2cm);
    \draw[thick] (5.5 cm,-2.5) circle (.2cm);
    \draw[thick] (5.5 cm,-5.5) circle (.2cm);
    
    \draw[thick] (0.2 cm,-4) -- +(1.1 cm,0);
    \draw[thick,dotted] (1.7 cm,-4)--+(2.1cm,0);
    \draw[thick] (5.4cm, -2.6)--(4.1 cm,-3.8);
    \draw[thick](4.1 cm,-4.2)-- (5.4cm, -5.4);
    \draw (8.5,-4) node {$G_2$};
    \draw[thick] (10 cm,-4) circle (.2cm);
    \draw[thick] (11.5 cm,-4) circle (.2cm);

    \draw[thick] (10.2 cm,-4) -- +(1.1 cm,0);
    \draw[thick] (10 cm,-3.8) -- +(1.5 cm,0);
    \draw[thick] (10 cm,-4.2) -- +(1.5 cm,0);
    \draw[thick] (10.85cm, -3.8cm) --(10.5 cm,-4)--(10.85cm, -4.2cm);
    \draw (14.5,-4) node {$F_4$};
    \draw[thick] (16 cm,-4) circle (.2cm);
    \draw[thick] (17.5 cm,-4) circle (.2cm);
    \draw[thick] (19 cm,-4) circle (.2cm);
    \draw[thick] (20.5 cm,-4) circle (.2cm);

    \draw[thick] (16.2 cm,-4) -- +(1.1 cm,0);
    \draw[thick] (17.7 cm,-3.9) -- +(1.1 cm,0);
    \draw[thick] (17.7 cm,-4.1) -- +(1.1 cm,0);
    \draw[thick] (18.05cm, -3.8cm) --(18.4 cm,-4)--(18.05cm, -4.2cm);
    \draw[thick] (19.2 cm,-4) -- +(1.1 cm,0);
    \draw (23.5,-4) node {$E_6$};
    \draw[thick] (25 cm,-4) circle (.2cm);
    \draw[thick] (26.5 cm,-4) circle (.2cm);
    \draw[thick] (28 cm,-4) circle (.2cm);
    \draw[thick] (29.5 cm,-4) circle (.2cm);
    \draw[thick] (31 cm,-4) circle (.2cm);
    \draw[thick] (28 cm,-5.5) circle (.2cm);
    
    \draw[thick] (25.2 cm,-4) -- +(1.1 cm,0);
    \draw[thick] (26.7 cm,-4) -- +(1.1 cm,0);
    \draw[thick] (28.2 cm,-4) -- +(1.1 cm,0);
    \draw[thick] (29.7 cm,-4) -- +(1.1 cm,0);
    \draw[thick] (28 cm,-4.2) -- +(0,-1.1cm);
    \draw (2.5,-8) node {$E_7$};
    \draw[thick] (4 cm,-8) circle (.2cm);
    \draw[thick] (5.5 cm,-8) circle (.2cm);
    \draw[thick] (7 cm,-8) circle (.2cm);
    \draw[thick] (8.5 cm,-8) circle (.2cm);
    \draw[thick] (10 cm,-8) circle (.2cm);
    \draw[thick] (11.5 cm,-8) circle (.2cm);
    \draw[thick] (7 cm,-9.5) circle (.2cm);
    
    \draw[thick] (4.2 cm,-8) -- +(1.1 cm,0);
    \draw[thick] (5.7 cm,-8) -- +(1.1 cm,0);
    \draw[thick] (7.2 cm,-8) -- +(1.1 cm,0);
    \draw[thick] (8.7 cm,-8) -- +(1.1 cm,0);
    \draw[thick] (10.2 cm,-8) -- +(1.1 cm,0);
    \draw[thick] (7 cm,-8.2) -- +(0,-1.1cm);
    \draw (15.5,-8) node {$E_8$};
    \draw[thick] (17 cm,-8) circle (.2cm);
    \draw[thick] (18.5 cm,-8) circle (.2cm);
    \draw[thick] (20 cm,-8) circle (.2cm);
    \draw[thick] (21.5 cm,-8) circle (.2cm);
    \draw[thick] (23 cm,-8) circle (.2cm);
    \draw[thick] (24.5 cm,-8) circle (.2cm);
    \draw[thick] (26 cm,-8) circle (.2cm);
    \draw[thick] (20 cm,-9.5) circle (.2cm);
    
    \draw[thick] (17.2 cm,-8) -- +(1.1 cm,0);
    \draw[thick] (18.7 cm,-8) -- +(1.1 cm,0);
    \draw[thick] (20.2 cm,-8) -- +(1.1 cm,0);
    \draw[thick] (21.7 cm,-8) -- +(1.1 cm,0);
    \draw[thick] (23.2 cm,-8) -- +(1.1 cm,0);
    \draw[thick] (24.7 cm,-8) -- +(1.1 cm,0);
    \draw[thick] (20 cm,-8.2) -- +(0,-1.1cm);
\end{tikzpicture}
\small
 \label{fig:dynkindiagrams}\caption{The Dynkin diagrams of indecomposable root systems}
 \normalsize
\end{figure}
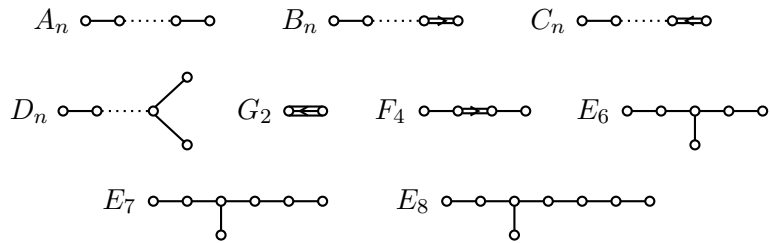
\end{center}
\vspace{-0.8cm}
\begin{rem}
All possibilities for Dynkin diagrams of indecomposable root systems are displayed in figure \ref{fig:dynkindiagrams} above, see for example \cite[Theorem 11.4]{Humphreys80}.
\end{rem}
For each root $\alpha\in\Phi$ there exists an isomorphism of algebraic groups onto a closed subgroup $U_{\al}$ of $G$
\begin{align}\label{ualpha}
u_{\alpha}:(\k,+)\lra U_{\alpha}\subseteq G.
\end{align}
These maps are unique up to multiplication by constants in $\k^{\+}$, \cite[Theorem 8.17]{MalleTesterman}.
The images of the maps $u_{\alpha}$ are called the \textbf{root subgroups} $U_{\alpha}$. This also means that every element $u\in U_{\alpha}$ can be written as $u=u_{\al}(c_{\al})$ for some uniquely determined $c_{\alpha}\in\k$.\\
In the Lie algebra $\fg$ we have the corresponding one-dimensional subspaces $\fg_{\alpha}:=\Lie(U_{\al}$. Then $\fg:=\bigoplus_{\al\in\Phi}\fg_{\al}\oplus\Lie(T)$. We fix a Chevalley basis $$\{e_{\al}\mid\al\in\Phi\}\cup\{h_i\mid i\in\{1,\ldots,|\Pi|\}$$ where $\{h_i\mid i\in\{1,\ldots,|\Pi|\}$ forms a basis of $\Lie(T)$ and the elements $e_{\alpha}$ generate the spaces $\fg_{\alpha}$ for all $\al\in\Phi$.\\
The \textbf{Weyl group} $W$ of $G$ is defined by $W:=\slant{N_G(T)}{T}$.\\
Furthermore, the \textbf{Bruhat decomposition} of elements of $G$, is defined as follows, see for example \cite[Theorem 11.17]{MalleTesterman} or \cite[28.3 and 28.4]{Humphreys75}.
\begin{defn}[Bruhat decomposition]\label{bruhatdecomp}
Use the same notation as before.
The \textbf{Bruhat decomposition} of an element $g\in G$ is given by $g=u'tn_wu$ where
\begin{enumerate}[itemsep=8pt]
\item $n_w\in N_G(T)$ is a representative of $w\in W$ in $N_G(T)$. Note that in order to ensure uniqueness, the $n_w$ are fixed once chosen.
\item $u\in\prod_{\alpha\in\Phi^+}U_{\alpha}=:U$, $U$ is a subgroup of $G$
\item $t\in T$, and
\item $u'\in\prod_{\substack{\alpha\in\Phi^+\\w.\alpha\in\Phi^-}}U_{\alpha}=:U^-_w$, $U^-_w$ is a subgroup of $G$.
\end{enumerate}
This decomposition is uniquely defined. In particular, we have $$G=\bigcup_{w\in W} Bn_wB,$$
where $B$ is a Borel subgroup such that $T\subseteq B$ and $\Phi^+$ are the positive roots with respect to $B$.
\end{defn}
The Weyl group $W$ of $G$ is generated by the simple reflections $s_{\alpha}$ for $\alpha\in\Pi$. The representative of $s_{\alpha}$ in $G$, denoted by $n_{\alpha}$, can be chosen as $n_{\alpha}=u_{\al}(1)u_{-\al}(-1)u_{\al}(1)$, see \cite[Section 8.4 and Theorem 8.17]{MalleTesterman} or \cite[1.9, p.19]{Carter85}.
\subsection{The set \texorpdfstring{$\cD_G$}{}}\label{cDG}
Consider a homomorphism of algebraic groups $\delta:\k^{\+}\lra G$ mapping any element $c\in \k^{\+}$ to an element $t_c\in T\subseteq G$. Then $\delta(\k^{\+})\subseteq T$ and we can apply the roots in $\Phi$ to $\im(\delta)$.\\
For $\alpha\in \Phi$, the map $\alpha\circ\delta:\k^{\+}\lra \k^{\+}$ is a homomorphism of algebraic groups and therefore we have $(\alpha\circ\delta)(c)=c^n$ for all $c\in \k^{\+}$ and some $n\in\Z$. Let $\langle \alpha,\delta\rangle:=n$. This defines a bilinear form. By abuse of notation we can define a linear map
\begin{align}\label{eqdelta}
\delta:\Phi\lra\Z,\quad \alpha\longmapsto\langle \alpha,\delta\rangle.
\end{align}
In the following text we are interested in a particular subset of these maps $\delta$ as above. We follow the construction of this subset as given in \cite[1.1]{Lusztig3}.\\
Let $G'$ be a connected reductive algebraic group of the same type as $G$, that is $G'$ has the same root system as $G$, but defined over $\C$. We recall that $T\subseteq G$ is a maximal torus in $G$ and let $Y_G:=\Hom(\k^{\+},T)$ be its cocharacter group. Similarly, let $T'\subseteq G'$ be a maximal torus and $Y_{G'}:=\Hom(\C^{\+},T')$. Both $N_G(T)$ and $N_{G'}(T')$ act on $Y_G$ and $Y_{G'}$ by conjugation respectively. Let $W$ be the Weyl group of $G$. As $G'$ is of the same type as $G$, $W$ is (up to isomorphism) also the Weyl group of $G'$. Now $W=\slant{N_G(T)}{T}$ acts on $T$ by conjugation and therefore $W$ also acts on $Y_G$ and by the same argument on $Y_{G'}$. As all maximal tori are conjugate in $G$ and $\im(\delta)\subseteq \tilde{T}$ for any map $\delta\in \Hom(\k^{\+},G)$ and a maximal torus $\tilde{T}$, we find a bijection between the set of orbits $G\backslash\Hom(\k^{\+},G)$ and $W\backslash Y_G$. As $W\backslash Y_G=W\backslash Y_{G'}$ (up to isomorphism), we can find a bijection between $G\backslash\Hom(\k^{\+},G)$ and $G'\backslash\Hom(\C^{\+},G')$.\\
Let
$$\cD_{G'}:=\left\{
\begin{array}{l|l}
\multirow{2}*{$f\in\Hom(\C^{\+},G')$} & \text{there exists } h\in\Hom(\SL_2(\C),G')\text{ s.t. }\\
 & h\begin{pmatrix}
a& 0\\0&\inv{a}
\end{pmatrix}= f(a)\text{ for all }a\in \C^{\+}
\end{array}
\right\}.$$
For the group $G$, we define the set $\cD_G\subseteq\Hom(\k^{\+},G)$ as follows. Let $\delta\in \Hom(\k^{\+},G)$. Then the element $\delta$ is contained in $\cD_G$ if and only if there exists an element $\delta'\in\cD_{G'}$ which corresponds to $\delta$ under the bijection of the orbits in $G'\backslash\Hom(\C^{\+},G')$ and  $G\backslash\Hom(\k^{\+},G)$.
\subsection{Weighted Dynkin diagrams}
In the following paragraph let $G$ be defined over an algebraically closed field $\k$, such that the characteristic of $\k$ is good for $G$. In this section, we follow the work of Carter \cite{Carter85}. Note that the results there only hold for the characteristic $p$ large enough. Nevertheless, the results are still true in general for good characteristic, see for example By Pommerening \cite{Pommerening1} and \cite{Pommerening2}, as well as Premet \cite{Premet}.

Let $0\neq e\in\fg=\Lie(G)$ be a nilpotent element. We can embed $e$ in a 3-dimensional subalgebra $\langle e,h,f \rangle$ of $\fg$ isomorphic to $\sl_2(\k)$ (\cite[Theorem 5.3.2]{Carter85}). This subalgebra determines a map $\gamma:\k^{\+}\ra G$ as follows: By \cite[Theorem 5.4.8]{Carter85} $\fg$ is a direct sum of irreducible $\sl_2(\k)$-modules with basis $x_1,\ldots,x_j$ and representation $\rho_j$ of $\sl_2(\k)$ such that
\begin{align*}
\hspace{1.5cm}ex_i&=x_{i+1}, &i&=1,2,\ldots,j-1, \hspace{1cm} &ex_j&=0,\hspace{1.5cm}\\
hx_i&=(2i-j-1)x_i,  &i&=1,2,\ldots,j,\tag{\theequation}\label{repsl2}\\
fx_{i+1}&=i(j-i)x_i, &i&=1,2,\ldots,j-1,\, &fx_j&=0,
\end{align*} as defined in \cite[Section 5.4]{Carter85}. Let $c\in\k$ and $x_1,\ldots,x_j$ be the basis of an irreducible $\sl_2(\k)$-module in $\fg$ as above. Then $\gamma$ acts on this basis by
\begin{align}\label{gammasl2}
\gamma(c).x_i=c^{2i-j-1}x_i,
\end{align} 
and $\gamma(c)$ describes an action of $\begin{pmatrix}
c & 0\\0&\inv{c}
\end{pmatrix}\in\SL_2(\k)$ on $\fg$, see \cite[Proposition 5.5.6]{Carter85}.\\
One can choose a maximal torus $T\subseteq G$ such that $\im(\gamma)\subseteq T$ and $\Pi\subseteq\Phi$ such that $\gamma(\Pi)\subseteq \{0,1,2\}$ (cf \cite[5.6.7]{Carter85}).
\begin{rem}\label{remgamma}~
For each map $\delta\in\cD_G$ one can find a map $\gamma$ as defined above and vice versa.\\
If $e=0$ we define the trivial map $\delta(c)=1$ for all elements $c\in\k^{\+}$.
\end{rem}
\begin{defn}[Weighted Dynkin diagrams]
Let $\gamma$ be as above. Then we can define the linear map $\gamma:\Phi\ra\Z$, where $\gamma(\alpha)=\langle \gamma,\alpha\rangle$, as in \eqref{eqdelta}.\\
As $\gamma$ is a linear map, it is determined by its values on the set of simple roots. This means that, instead of giving $\gamma$, we can take the Dynkin diagram corresponding to $\Phi$ and assign to the node for the root $\alpha\in\Pi$ the value $\gamma(\alpha)$. By Remark \ref{remgamma} we can find a system of simple roots such that the nodes are labelled by 0,1 or 2.
The resulting diagram is called the \textbf{weighted Dynkin diagram} of $\gamma$.\\
Let $\gamma$ be as in \eqref{gammasl2},and let $\delta$ correspond to $\gamma$ under the bijection in section \ref{cDG}. We will call $\delta$ the map \textbf{arising} from the weighted Dynkin diagram of $\gamma$ if $\delta(\alpha)=\gamma(\alpha)$ for all $\alpha\in\Pi$ (and hence all $\al\in\Phi$).
\end{defn}
\begin{rem}
Even though the weighted Dynkin diagrams arise from the above construction in good characteristic, we can define corresponding maps $\eta_{\delta}:\Phi\lra\Z$ such that $\eta_{\delta}(\alpha)$ corresponds to the weight of the node belonging to the simple root $\alpha\in\Pi$ in every characteristic. If $\delta\in\cD_G$ and $\eta_{\delta}:\Phi\ra\Phi, \al\mapsto\langle\delta,\al\rangle$ is the corresponding linear map on $\Phi$, we will also write $\delta$ instead of $\eta_{\delta}$ by abuse of notation.
\end{rem}
\subsection{The sets \texorpdfstring{$\fg_i^{\delta},~\fg_{\geq i}^{\delta}$}{}, and \texorpdfstring{$\fg_2^{\delta!}$}{}} 
Following \cite[Section 1]{Lusztig3}, we will define certain subsets of the Lie algebra $\fg$ which will eventually lead us to the definition of the nilpotent pieces.\\
From now on, let the characteristic of $\k$ be arbitrary. Let  $\delta\in \cD_G$ be a cocharacter, that is $\delta(\k^{\+})\subseteq T$, and $i\in \Z$. We can define subspaces of the Lie algebra $\fg$ depending on the weighted Dynkin diagram corresponding to $\delta$. These subspaces are crucial in the definition of the \textit{nilpotent pieces} whose union will -- in good characteristic -- prove to be the nilpotent variety.\\
As $G$ is a connected reductive group we have $\fg=\ft\oplus\bigoplus_{\alpha\in\Phi}\fg_{\alpha}$ where $\ft=$Lie$(T)$ and the $\fg_{\alpha}=\{x\in\fg\mid\Ad(t)(x)=\alpha(t)x~\text{ for all }t\in T\}=\Lie(U_{\al})$ are the one-dimensional rootspaces, see \cite[Theorem 8.17]{MalleTesterman}.\\
Define $$\fg_i^{\delta}:=\{x\in\fg\mid\Ad(\delta(a))(x)=a^ix~\text{ for all }a\in \k^{\+}\}.$$
Clearly, we have $\fg_i^{\delta}=\bigoplus_{\delta(\alpha)=i}\fg_{\alpha}$ for all $i\in\Z\setminus\{0\}$. If $i=0$ we have $\fg_0^{\delta}=\ft\oplus\bigoplus_{\delta(\alpha)=0}\fg_{\alpha}$. Note that for $i\neq 0$ this is not a Lie algebra: We have $[\fg_{\al},\fg_{\beta}]\subseteq \fg_{\al+\beta}$ for roots $\al,\beta\in\Phi$ such that $\al+\beta\in\Phi$. In particular, $\delta(\alpha+\beta)=2i$ if $\fg_{\al},\fg_{\beta}\subseteq\fg_i^{\delta}$, see also \cite[Proposition 5.5.7]{Carter85}.\\
Similarly, for $i\in\Z$ define the sets $$\fg_{\geq i}^{\delta}:=\bigoplus_{j\geq i}\fg_j^{\delta}.$$
By the same argument as above, we can see that for $i\geq 0$ the $\fg_{\geq i}^{\delta}$ are in fact Lie subalgebras.\\
We can define the corresponding subgroups $G_{\geq i}^{\delta}$, $i\geq 0$, such that Lie($G_{\geq i}^{\delta})=\fg_{\geq i}^{\delta}$ by
\begin{align*}
G_{\geq i}^{\delta}=\langle U_{\alpha}\mid \ \alpha\in\Phi, \langle \alpha,\delta\rangle\geq i\rangle\quad\text{if } i\neq 0,
\end{align*}
and 
\begin{align*}
G_{\geq 0}^{\delta}=\langle T,U_{\alpha}\mid \ \alpha\in\Phi, \langle \alpha,\delta\rangle\geq 0\rangle.
\end{align*}
In particular, $G_{\geq 0}^{\delta}$ is a parabolic subgroup of $G$ with the Levi subgroup $G_{0}^{\delta}=\langle T,U_{\alpha}\mid \ \alpha\in\Phi, \langle \alpha,\delta\rangle= 0\rangle$ (\cite[1.2]{Lusztig3}).\\
We have an equivalence relation on the set $\cD_G$ where $\delta\sim\delta'$ if $\fg_{\geq i}^{\delta}=\fg_{\geq i}^{\delta'}$ for all $i\in\Z$. We denote by 
$$\wtri_{\delta}:=\{\delta'\in\cD_G\mid \fg_{\geq i}^{\delta}=\fg_{\geq i}^{\delta'}\text{ for all } i\in\Z\}$$
 the equivalence class of a map $\delta\in\cD_G$.
\begin{defn}[$\fg_2^{\delta!}$]
Let $x\in\fg$. Then $G_x=\{g\in G\mid \Ad(g)x=x\}$ is the stabilizer of $x$ in $G$.\\
Now we can define 
$$\fg_2^{\delta!}:=\{x\in\fg_2^{\delta}\mid G_x\subseteq G_{\geq 0}^{\delta}\}.$$
Note that in general, $0\notin \fg_2^{\delta!}$, so $\fg_2^{\delta!}$ is not a subspace of $\fg$.
\end{defn}
\section{The Nilpotent Pieces}\label{thenilpieces}
From now on write $\fg_i^{\wtri},~ \fg_{\geq i}^{\wtri}$ instead of $\fg_i^{\delta},~ \fg_{\geq i}^{\delta}$ where $\wtri=\wtri_{\delta}$.\\
We have the obvious isomorphism of vector spaces
\begin{align*}
\fg^{\wtri}_2\xrightarrow{~\sim~} \slant{\fg_{\geq 2}^{\wtri}}{\fg_{\geq 3}^{\wtri}}.
\end{align*}
Let $\Sigma^{\wtri}$ be the image of $\fg_2^{\delta!}$ under this isomorphism. Furthermore, using the natural map
\begin{align*}
\pi:\fg_{\geq 2}^{\wtri}\lra \slant{\fg_{\geq 2}^{\wtri}}{\fg_{\geq 3}^{\wtri}}
\end{align*} define $\sigma^{\wtri}:=\pi^{-1}(\Sigma^{\wtri})$.
We let $\blacktriangle_{\delta}$  be the $G$-orbit of $\wtri_{\delta}$ via the conjugation action of $G$. We will sometimes also refer to this orbit by $\btri$.
The following definition is due to Lusztig, \cite[A.6.]{Lusztig3}.
\begin{defn}[Nilpotent Pieces]\label{nilpotentpieces}
The sets
$$\cN_{\fg}^{\btri_{\delta}}=\bigcup_{\wtri\in\btri_{\delta}}\sigma^{\wtri}$$
where $\btri_{\delta}$ runs over all $G$-orbits represented by the set $\{\wtri_{\delta}\mid \delta\in\cD_G\}$, are the \textbf{nilpotent pieces} in $\fg=$Lie$(G)$.
\end{defn}
\begin{thm}[see {\cite[A.6]{Lusztig3}}]
The pieces $\cN_{\fg}^{\blacktriangle}$ form a partition of the nilpotent variety $\cN_{\fg}$ of $\fg$ if $G$ is simple of classical type $A,B,C$ or $D$ in any characteristic.
\end{thm}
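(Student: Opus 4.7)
The plan is to verify the two defining properties of a partition separately: (i) every nilpotent $x \in \cN_\fg$ lies in at least one piece $\cN_\fg^{\btri_\delta}$, and (ii) distinct pieces are disjoint. Condition (i) reduces to producing, for every nilpotent $x$, a cocharacter $\delta \in \cD_G$ with $x \in \fg_2^{\delta!}$; condition (ii) reduces to showing that $x$ pins down the $G$-orbit of $\wtri_\delta$.

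I would first dispose of the easy cases. In type $A$ all primes are good, and for types $B$, $C$, $D$ with $\chara(\k) \neq 2$ we are in good characteristic, so the Jacobson--Morozov theorem together with the results of Pommerening and Premet cited earlier furnishes, for every nonzero nilpotent $x$, an associated cocharacter $\delta \in \cD_G$ with $x \in \fg_2^\delta$. The containment $G_x \subseteq G_{\geq 0}^\delta$ is standard: the reductive part of $G_x$ can be arranged to centralise $\im(\delta)$ and its unipotent radical lies in $G_{\geq 1}^\delta$, so $x \in \fg_2^{\delta!}$. Since weighted Dynkin diagrams are complete invariants of nilpotent orbits in good characteristic, different $G$-orbits of $\delta$ correspond to different nilpotent orbits, which yields disjointness.

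The substantive case is $\chara(\k) = 2$ for types $B$, $C$, $D$, where $\sl_2$-theory breaks down. Here I would pass to the natural module $V$ equipped with its bilinear or quadratic form and, for each nilpotent $x$, construct a cocharacter $\delta$ directly from a Jordan-type basis of $V$ adapted simultaneously to $x$ and to the form (Hesselink- or Spaltenstein-style distinguished bases). One verifies that $\delta \in \cD_G$ by identifying it with the corresponding complex cocharacter under the bijection of Section~\ref{cDG}, while the conditions $x \in \fg_2^\delta$ and $G_x \subseteq G_{\geq 0}^\delta$ become explicit linear-algebra statements about operators that commute with $x$ and preserve the form, checked directly on the distinguished basis. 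For disjointness one uses that the filtration $(\fg_{\geq i}^\delta)_i$ determines $\wtri_\delta$, so the invariants read off from any $x \in \sigma^{\wtri_\delta}$ (the partition of $x$ on $V$ together with the refined characteristic-$2$ data) pin $\delta$ down up to $G$-conjugacy.

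The main obstacle is precisely this bad-characteristic analysis. Without an $\sl_2$-triple one loses the conceptual source of the cocharacter, the orbit parametrisation is finer than a partition decorated with type data, and one is forced into a lengthy case-by-case study of $B$, $C$, $D$ organised around Hesselink-type strata. This is essentially what occupies most of \cite{Lusztig3}, and I see no conceptual shortcut that avoids the explicit classification.
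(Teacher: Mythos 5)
This theorem is cited in the paper from \cite[A.6]{Lusztig3} and is not reproved there, so there is no paper-internal argument to compare against; your sketch should be measured against Lusztig's proof itself. As a high-level roadmap of that proof your account is accurate: in good characteristic (type $A$ always; types $B$, $C$, $D$ for $p\neq 2$) the Jacobson--Morozov/Pommerening--Premet theory gives, for each nonzero nilpotent $x$, a cocharacter $\delta\in\cD_G$ with $x\in\fg_2^{\delta}$ and $G_x\subseteq G_{\geq 0}^{\delta}$, and in fact $\fg_2^{\delta!}$ is the dense $G_0^{\delta}$-orbit in $\fg_2^{\delta}$ so that $\cN_{\fg}^{\btri_{\delta}}$ is a single nilpotent $G$-orbit, giving coverage and disjointness simultaneously (this is precisely what the paper's remark immediately following the theorem records). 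In characteristic $2$ for types $B$, $C$, $D$, Lusztig indeed passes to the natural module with its form, attaches to each nilpotent element a canonical filtration (the analogue of your distinguished Jordan-type basis) and verifies both defining properties of a partition through an extended type-by-type computation; and you are right that this is the bulk of \cite{Lusztig3} and that no conceptual shortcut is known. Two small points of precision worth flagging: first, the disjointness claim in good characteristic rests on the equality $\sigma^{\wtri_{\delta}}=\Ad(G_{\geq 0}^{\delta})(e)$ (Premet), which is what makes each piece exactly one $G$-orbit, rather than merely on weighted Dynkin diagrams being complete invariants; second, the assertion that a candidate $\delta$ produced from a distinguished basis actually lies in $\cD_G$ is not automatic from the bijection of Section~\ref{cDG} but is itself part of what must be checked case by case, since $\cD_G$ is defined by matching against $\operatorname{SL}_2$-homomorphisms over $\C$. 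Neither point affects the overall correctness of your outline, only its level of detail.
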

\begin{rem}[The pieces in good characteristic]
In good characteristic the nilpotent orbits in $\fg$ correspond bijectively to the weighted Dynkin diagrams, see \cite[5.6, 5.11]{Carter85}.\\
Let $\k$ be of good characteristic for $G$ and let the element $e\in\fg$ be in the nilpotent orbit corresponding to the weighted Dynkin diagram arising from a map $\delta\in\wtri_{\delta}\in\btri$. 
By \cite[5.5.7]{Carter85} we have $e\in\fg_2^{\delta}$ and by \cite[5.6.2]{Carter85} the orbit of $e$ under the action of $C_G(\delta(\k^{\+}))=G_0^{\delta}$ is a dense open subset of $\fg_2^{\delta}$. Let $\cO_e$ be this orbit. By \cite[1.2.(a)]{Lusztig3} it follows that $\cO_e=\fg_2^{\delta!}$. This means that in good characteristic each piece $\cN_{\fg}^{\btri}$ is given by the nilpotent orbit in $G$ of an element corresponding to the weighted Dynkin diagram arising from $\delta\in\wtri_{\delta}\in\btri$. Therefore the above theorem continues to hold for exceptional type in good characteristic.
It is not yet known whether this is true for $G$ of exceptional type in bad characteristic.
\end{rem}
The pieces are explicitly known for groups of type $A,B,C$ and $D$ in all characteristics (see \cite{Lusztig3}). However, we do not yet know the pieces in bad characteristic for simple groups of exceptional type, i.e. $G=G_2,F_4, E_6$ or $E_7$ for $p=2,3$ and $G=E_8$ for $p=2,3$ or $5$.\\
For further computations we note that it is enough to assume that $G$ is a semisimple adjoint group.
\begin{prop}
Let $G$ be a connected reductive algebraic group. Then the pieces of $G$ are the same as those of the semisimple adjoint group of the same type.
\end{prop}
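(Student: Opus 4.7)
The plan is to exploit the canonical central quotient $\pi : G \twoheadrightarrow H$, where $H := \Gad$ is the semisimple adjoint group of the same type and $\ker \pi = Z(G)$, and to verify that each ingredient of Definition~\ref{nilpotentpieces} is preserved by $\pi$. Write $\mathfrak{h} := \Lie(H)$. Fix a maximal torus $T \subseteq G$; then $\pi(T)$ is a maximal torus of $H$, and since $G$ and $H$ share the root system $\Phi$, the map $\pi$ identifies root subgroups $U_\al^G \xrightarrow{\sim} U_\al^H$ and root spaces $\fg_\al \xrightarrow{\sim} \mathfrak{h}_\al$ via $d\pi$, whose kernel is $\fz(\fg) \subseteq \ft$. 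Because the construction of $\cD_G$ in Section~\ref{cDG} through the $\C$-model depends only on the root datum, and because every root $\al$ is trivial on $Z(G)$, postcomposition $\delta \mapsto \pi \circ \delta$ defines a bijection $\cD_G \xrightarrow{\sim} \cD_H$ preserving all pairings $\langle \al,\delta \rangle$; consequently the weighted Dynkin diagrams, the equivalence classes $\wtri_\delta$, and (since the $G$-conjugation action on cocharacters factors through $H$) the orbits $\btri_\delta$ correspond bijectively.

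With this identification, for $i \neq 0$ the map $d\pi$ restricts to isomorphisms $\fg_i^\delta \xrightarrow{\sim} \mathfrak{h}_i^{\pi\delta}$ and $\fg_{\geq i}^\delta \xrightarrow{\sim} \mathfrak{h}_{\geq i}^{\pi\delta}$, and in particular identifies the quotient $\fg_{\geq 2}^\wtri / \fg_{\geq 3}^\wtri$ with $\mathfrak{h}_{\geq 2}^{\pi\wtri} / \mathfrak{h}_{\geq 3}^{\pi\wtri}$. The parabolic $G_{\geq 0}^\delta$ contains $T$ and hence $Z(G) = \ker \pi$, so $G_{\geq 0}^\delta = \pi^{-1}(H_{\geq 0}^{\pi\delta})$; and because $Z(G)$ acts trivially on $\fg$, the stabilizer satisfies $G_x = \pi^{-1}(\pi(G_x))$ for every $x \in \fg$. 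The condition $G_x \subseteq G_{\geq 0}^\delta$ is therefore equivalent to $\pi(G_x) \subseteq H_{\geq 0}^{\pi\delta}$.

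To close the argument, I would prove $\pi(G_x) = H_y$, where $y := d\pi(x)$; this yields $\fg_2^{\delta!} = (d\pi)^{-1}\bigl(\mathfrak{h}_2^{(\pi\delta)!}\bigr)$ and identifies the sets $\sigma^\wtri$ with their $H$-analogues. The hard part will be the reverse inclusion $H_y \subseteq \pi(G_x)$: for $\bar g \in H_y$ and a lift $g_0 \in \pi^{-1}(\bar g)$, one has $\Ad(g_0)\, x - x \in \ker(d\pi) = \fz(\fg) \subseteq \ft$, and no choice of lift within $Z(G)$ can cancel this central perturbation. My plan is to use Jordan decomposition: writing $\Ad(g_0)\, x = x + z$ with $x$ nilpotent, $z \in \fz(\fg) \subseteq \ft$ toral (hence ad-semisimple), and $[x,z] = 0$ (since $z$ is central), this is the Jordan decomposition of $\Ad(g_0)\, x$. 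Since $\Ad(g_0)\, x$ is $G$-conjugate to the nilpotent $x$, it is itself nilpotent, forcing its semisimple part $z$ to vanish. Hence $g_0 \in G_x$, giving $\pi(G_x) = H_y$, and the nilpotent pieces of $G$ and $H$ coincide under the bijection of nilpotent varieties induced by $d\pi$.
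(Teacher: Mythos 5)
Your proposal follows the same strategy as the paper: pass through the central quotient $\pi\colon G\to\Gad$ and check that each ingredient of the definition of the nilpotent pieces is preserved. The proposal is correct, and in two places it is actually tighter than the paper's version.

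The main difference is how the crucial implication "$\Ad(\pi(g))(d\pi(x))=d\pi(x)\Rightarrow\Ad(g)(x)=x$" is justified. The paper appeals to the fact that $\pi$ restricts to an isomorphism on closed connected unipotent subgroups. That remark does not immediately apply, because a priori $\Ad(g)(x)$ and $x$ need not lie in the Lie algebra of a common unipotent subgroup — indeed, the difference $\Ad(g)(x)-x$ lies in $\ker(d\pi)\subseteq\ft$, so $\Ad(g)(x)$ may leave $\bigoplus_{\al}\fg_{\al}$. Your Jordan--Chevalley argument handles exactly this: $z:=\Ad(g)(x)-x$ lies in $\ker(d\pi)$, which is central and toral, so $x+z$ is the Jordan decomposition of $\Ad(g)(x)$; since $\Ad(g)(x)$ is nilpotent, its semisimple part $z$ must vanish. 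That is a cleaner and more watertight justification than the one given in the paper. Minor caveat: you should write $\ker(d\pi)\subseteq\fz(\fg)$ rather than equality, since only the containment is needed (and used).

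The second difference is organisational. The paper works in two steps — first comparing $\fg$ with $\fg_{\ad}$ via the isogeny (which, strictly speaking, is only an isogeny when $G$ is semisimple), then separately reducing a general connected reductive $G$ to $[G,G]$. You instead treat the full central quotient $G\to\Gad$ in a single pass and note that $Z(G)\subseteq\ker\Ad\subseteq G_x$ and $Z(G)\subseteq T\subseteq G_{\geq 0}^{\delta}$, so both sides of the stabilizer condition are $\pi$-saturated. This avoids the two-step dance, at the price of working with a quotient with positive-dimensional kernel; but since every root and every set $\fg_i^{\delta}$, $G_{\geq i}^{\delta}$ is insensitive to $Z(G)$, this causes no trouble. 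You also take care to verify the compatibility of $\cD_G$, weighted Dynkin diagrams, and $\btri$-orbits, which the paper leaves implicit. Both routes are valid; yours is slightly more careful in the stabilizer step and slightly more economical overall.
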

\begin{proof}
Let $\Gad$ be an adjoint group of the same type as $G$. We first show that the pieces in $\fg_{\ad}:=\Lie(\Gad)$ are the same as in $\fg$.\\
There exists a central isogeny $\pi: G\ra \Gad$ by \cite[24.1]{Borel} with differential $d\pi:\fg\ra\fg_{\ad}$. By \cite[Proposition 22.4]{Borel} $d\pi$ is injective on each $\fu_{\al}:=\Lie(U_{\al})$ and by \cite[Corollary 22.5]{Borel} $\fg_{\ad}=d\pi(\fg)+\ft'$ where $\ft'=\Lie(T')$, $T'\subseteq \Gad$ is a maximal torus in $\Gad$.\\
Let $x\in\fg$ such that $x\in\fg_2^{\delta!}$ for some $\delta\in \cD_G$. We have 
\begin{align*}
\Ad(\pi(g))(d\pi(x))&=(d(\Int_{\pi(g)})\circ d\pi)(x) =d(\Int_{\pi(g)}\circ\pi)(x)\\
&=d(\pi\circ\Int_g)(x)=d\pi(\Ad(g)(x)).
\end{align*}
Since $x\in\bigoplus_{\substack{\al\in\Phi \\ \delta(\al)=2}}\fg_{\al}$ is nilpotent, we have $\Ad(g)(x)=x$ if and only if $\Ad(\pi(g))(d\pi(x))=d\pi(x)$. This follows, because $\pi$ is a central isogeny and as such is an isomorphism if it is restricted to a closed, connected, unipotent subgroup of $G$, see \cite[Proposition 22.4]{Borel}. Note that every nilpotent element in $\fg_{\ad}$ can be written as $d\pi(y)$ for some nilpotent element $y\in\fg$. For $x\in\fg_2^{\delta!}$ we have $\Ad(g)(x)=x$ only for $g\in G_x\subseteq G_{\geq 0}^{\delta}$. As $\pi$ is an isogeny it induces a bijection $\rho$ between the root systems of $G$ and $\Gad$ where $\pi(U_{\al})=U_{\rho(\al)}$, see \cite[9.6.1]{Springer}. In particular, $\pi(g)\in \left({\Gad}\right)_{\geq 0}^{\tilde{\delta}}$ with $\tilde{\delta}(\al)=\delta(\rho^{-1}(\al))$ and so $d\pi(x)\in\fg_2^{\tilde{\delta!}}$. As $\pi$ restricted to $U_{\al}$ is an isomorphism onto its image, the claim is also true in the other direction.\\
We now show the reduction to semisimple groups. If $G$ is not semisimple we can write $G=[G,G]Z(G)^{\circ}$ and $[G,G]$ is semisimple, see for example \cite[Corollary 8.22]{MalleTesterman}. Let $\fg':=\Lie([G,G])$. If $x\in\fg$ is nilpotent then $x\in\fg'$ and $C_G(x)=C_{[G,G]}(x)Z(G)^{\circ}\subseteq G_{\geq 0}^{\delta}$ if and only if $C_{[G,G]}(x)\subseteq [G,G]_{\geq 0}^{\delta}$ for a weighted Dynkin diagram $\delta$ and therefore for all $\delta\in\cD_G$.
\end{proof}
It is relatively easy to see that each nilpotent piece consists of a union of nilpotent orbits. To verify this, let $x\in\cN_{\fg}^{\btri}$. By the above definition we have $x\in\sigma^{\wtri}$ for a $\wtri \in\btri$. Furthermore, let $y=\Ad(g)(x)$ for some $g\in G$. Then $y\in \Ad(g)(\sigma^{\wtri})$. To see that $\Ad(g)(\sigma^{\wtri})\subseteq\cN_{\fg}^{\btri}$, we write $\sigma^{\wtri}=\fg_2^{\delta!}\oplus\fg_{\geq 3}^{\wtri}$ for some $\delta\in\wtri$.
\begin{lem}\label{ad_on_gd}
Let $g\in G$ and $i\in\Z$. Let $\delta$ be a weighted Dynkin diagram and $\wtri:=\wtri_{\delta}$. Then
\begin{enumerate}[(i)]
\item $\Ad(g)(\fg^{\wtri}_{\geq i})=\fg^{g.\wtri}_{\geq i}$ and
\item $\Ad(g)(\fg^{\delta!}_{2})=\fg^{(g.\delta)!}_{2}$.
\end{enumerate}
\end{lem}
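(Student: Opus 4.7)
The plan is to derive both parts from the covariance of the weight-space decomposition under conjugation. Writing $(g.\delta)(a):=g\delta(a)\inv{g}$, the fact that $\Ad$ is a group homomorphism gives the functorial identity
\begin{align*}
\Ad((g.\delta)(a))\circ\Ad(g)=\Ad(g)\circ\Ad(\delta(a))
\end{align*}
for every $a\in\k^{\+}$.

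For part (i), this identity shows at once that $x\in\fg_j^{\delta}$ if and only if $\Ad(g)(x)\in\fg_j^{g.\delta}$, so $\Ad(g)(\fg_j^{\delta})=\fg_j^{g.\delta}$ for every $j\in\Z$; summing over $j\geq i$ gives the claim at the level of a specific cocharacter. The same calculation shows that the conjugation action $\delta\mapsto g.\delta$ descends to equivalence classes, making $g.\wtri:=\wtri_{g.\delta}$ well defined: if $\delta\sim\delta'$, then $\fg_{\geq i}^{g.\delta}=\Ad(g)(\fg_{\geq i}^{\delta})=\Ad(g)(\fg_{\geq i}^{\delta'})=\fg_{\geq i}^{g.\delta'}$, so $g.\delta\sim g.\delta'$, completing part (i).

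For part (ii), I reduce the equality $\Ad(g)(\fg_2^{\delta!})=\fg_2^{(g.\delta)!}$ to two standard observations. First, stabilizers transport along the $G$-action: $G_{\Ad(g)(x)}=gG_x\inv{g}$. Second, the group-level analogue of part (i) holds, namely $gG_{\geq 0}^{\delta}\inv{g}=G_{\geq 0}^{g.\delta}$; this follows either from part (i) together with the fact that $G_{\geq 0}^{\delta}$ is the unique connected subgroup of $G$ with Lie algebra $\fg_{\geq 0}^{\delta}$, or from the intrinsic description $G_{\geq 0}^{\delta}=\{h\in G\mid \lim_{a\to 0}\delta(a)h\inv{\delta(a)}\text{ exists in }G\}$, which is manifestly covariant under conjugation by $g$. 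Combining these two facts with part (i), the conjunction ``$x\in\fg_2^{\delta}$ and $G_x\subseteq G_{\geq 0}^{\delta}$'' is equivalent to ``$\Ad(g)(x)\in\fg_2^{g.\delta}$ and $G_{\Ad(g)(x)}\subseteq G_{\geq 0}^{g.\delta}$'', that is, to $\Ad(g)(x)\in\fg_2^{(g.\delta)!}$. Since this is an equivalence rather than a one-sided implication, we obtain equality directly (the reverse inclusion also follows by applying the same reasoning to $\inv{g}$).

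The only mild bookkeeping issue is that $g.\delta$ takes values in $gT\inv{g}$ rather than $T$, so any generator-by-generator description of $G_{\geq 0}^{g.\delta}$ refers to root groups for the pair $(G,gT\inv{g})$. Working intrinsically via weight spaces or the limit characterization sidesteps this entirely, and so no real obstacle remains: the lemma is a formal consequence of the definitions.
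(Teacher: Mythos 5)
Your argument is correct and follows essentially the same route as the paper: part (i) by the covariance identity $\Ad(g)\circ\Ad(\delta(a))=\Ad(g\delta(a)\inv{g})\circ\Ad(g)$ (which the paper unwinds as an explicit set computation), and part (ii) by combining $G_{\Ad(g)(x)}=gG_x\inv{g}$ with $gG_{\geq 0}^{\delta}\inv{g}=G_{\geq 0}^{g.\delta}$, the latter deduced via Lie algebras exactly as in the paper. Your added remarks --- that $g.\delta$ descends to the equivalence classes $\wtri$, and that $g.\delta$ lands in $gT\inv{g}$ so the weight-space description should be read intrinsically --- are correct and fill in small points the paper leaves implicit.
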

\begin{proof}
\begin{enumerate}[(i)]
\item As $\fg^{\wtri}_{\geq i}=\bigoplus_{j\geq i}\fg^{\wtri}_{j}$ it is enough to show that $\Ad(g)(\fg^{\wtri}_{j})=\fg^{g.\wtri}_{j}$.\\
We have 
\begin{align*}
\hspace*{1.5cm} \Ad(g)(\fg^{\wtri}_{j})&=\Ad(g)(\fg^{\delta}_{j})\\
&=\{\Ad(g)x\in\fg\mid\Ad(\delta(a))x=a^jx\text{ for all }a\in \k^{\+}\}\\
&=\{x\in\fg\mid\Ad(\delta(a)\inv{g})x=a^j\Ad(\inv{g})x\text{ for all }a\in \k^{\+}\}\\
&=\{x\in\fg\mid\Ad(g\delta(a)\inv{g})x=a^jx\text{ for all }a\in \k^{\+}\}\\
&=\fg^{g.\wtri}_{j},
\intertext{ and therefore $\Ad(\fg^{\wtri}_{\geq i})=\bigoplus_{j\geq i}\Ad(\fg^{\wtri}_{j})=\bigoplus_{j\geq i}\fg^{g.\wtri}_{j}=\fg^{g.\wtri}_{\geq i}.$
\item Firstly, we have}
G_{\Ad(g)(x)}&=\{h\in G\mid \Ad(hg)(x)=\Ad(g)(x)\}\\
&=\{h\in G\mid \Ad(\inv{g}hg)(x)=x\}\\
&=\{gh\inv{g}\in G\mid \Ad(h)(x)=x\}\\
&=gG_x\inv{g}.
\intertext{ Additionally, we defined $G_{\geq 0}^{\wtri}$ to be the well-defined parabolic subgroup of $G$ such that $\Lie(G_{\geq 0}^{\wtri})=\fg_{\geq 0}^{\wtri}$. Then $\Lie(G_{\geq 0}^{g.\wtri})=\fg_{\geq 0}^{g.\wtri}$ and by (i) $\fg_{\geq 0}^{g.\wtri}=\Ad(g)(\fg_{\geq 0}^{\wtri})$.\newline
It follows, that $\Lie(G_{\geq 0}^{g.\wtri})=\Ad(g)(\Lie(G_{\geq 0}^{\wtri}))=\Lie(gG_{\geq 0}^{\wtri}\inv{g})$, so $G_{\geq 0}^{g.\wtri}=gG_{\geq 0}^{\wtri}\inv{g}$.\newline
Finally this shows that}
\Ad(g)(\fg^{\delta!}_{2})&=\{\Ad(g)(x)\mid x\in\fg_2^{\delta},~G_x\subseteq G_{\geq 0}^{\delta}\}\\
&=\{\Ad(g)(x)\mid x\in\fg_2^{\wtri},~G_x\subseteq G_{\geq 0}^{\wtri}\}\\
&=\{x\mid x\in\Ad(g)(\fg_2^{\wtri}),~G_{\Ad(\inv{g}x)}\subseteq G_{\geq 0}^{\wtri}\}\\
&=\{x\mid x\in\fg_2^{g.\wtri},~\inv{g}G_{x}g\subseteq G_{\geq 0}^{\wtri}\}\\
&=\{x\in\fg_2^{g.\wtri}\mid G_{x}\subseteq gG_{\geq 0}^{\wtri}\inv{g}\}\\
&=\{x\in\fg_2^{g.\wtri}\mid G_{x}\subseteq G_{\geq 0}^{g.\wtri}\}\\
&=\fg^{(g.\delta)!}_{2}.
\end{align*}
\end{enumerate}
\end{proof}
\begin{cor}
If $x\in\fg$ is nilpotent and $\cO_x$ is the $G$-orbit of $x$ then $x\in\cN_{\fg}^{\btri}$ if and only if $\cO_x\subseteq\cN_{\fg}^{\btri}$.\\
In order to compute the pieces, it is therefore enough to check for each nilpotent orbit in $\fg$ whether a chosen representative of this orbit lies in a given piece.
\end{cor}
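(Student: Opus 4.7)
The proof is almost entirely a bookkeeping exercise built on Lemma \ref{ad_on_gd}. The non-trivial direction is: if $x\in\cN_{\fg}^{\btri}$ then $\cO_x\subseteq\cN_{\fg}^{\btri}$; the reverse implication is immediate from $x\in\cO_x$. So the plan is to fix $x\in\cN_{\fg}^{\btri}$ and an arbitrary $g\in G$, and show $\Ad(g)(x)\in\cN_{\fg}^{\btri}$.

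First I would unpack $\sigma^{\wtri}$. By construction, $\Sigma^{\wtri}$ is the image of $\fg_2^{\delta!}$ under the isomorphism $\fg_2^{\wtri}\xrightarrow{\sim}\fg_{\geq 2}^{\wtri}/\fg_{\geq 3}^{\wtri}$, so taking $\pi$-preimages gives $\sigma^{\wtri}=\fg_2^{\delta!}+\fg_{\geq 3}^{\wtri}$ (where $\delta\in\wtri$ is any chosen representative, noting that $\fg_2^{\delta!}$ depends only on $\wtri$ and $\delta(\Pi)$, since $\fg_2^{\delta}$ and $G_{\geq 0}^{\delta}$ do). Since $x\in\cN_{\fg}^{\btri}$, pick $\wtri\in\btri$ and $\delta\in\wtri$ with $x\in\sigma^{\wtri}=\fg_2^{\delta!}+\fg_{\geq 3}^{\wtri}$.

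Next I would apply Lemma \ref{ad_on_gd}. Part (i) gives $\Ad(g)(\fg_{\geq 3}^{\wtri})=\fg_{\geq 3}^{g.\wtri}$, and part (ii) gives $\Ad(g)(\fg_2^{\delta!})=\fg_2^{(g.\delta)!}$. A brief check that the equivalence class is equivariant — namely, $\wtri_{g.\delta}=g.\wtri_{\delta}$, which follows directly from part (i) applied to both $\delta$ and any other $\delta'\in\wtri_{\delta}$ — shows that $g.\delta$ is a representative of $g.\wtri$. Combining,
\begin{align*}
\Ad(g)(\sigma^{\wtri})=\fg_2^{(g.\delta)!}+\fg_{\geq 3}^{g.\wtri}=\sigma^{g.\wtri}.
\end{align*}
Since $\btri$ is by definition the $G$-orbit of $\wtri$, we have $g.\wtri\in\btri$, so $\sigma^{g.\wtri}\subseteq\cN_{\fg}^{\btri}$ and therefore $\Ad(g)(x)\in\cN_{\fg}^{\btri}$. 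As $g\in G$ was arbitrary, $\cO_x\subseteq\cN_{\fg}^{\btri}$.

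There is no real obstacle here beyond the compatibility verification $\wtri_{g.\delta}=g.\wtri$; the essential content has already been packaged in Lemma \ref{ad_on_gd}. The second (computational) assertion of the corollary is an immediate consequence: because each piece is a union of nilpotent $G$-orbits, deciding whether a given orbit lies in $\cN_{\fg}^{\btri}$ is reduced to testing a single representative against the defining conditions of $\sigma^{\wtri}$ for some $\wtri\in\btri$.
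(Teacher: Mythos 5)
Your proof is correct and follows essentially the same route as the paper: write $\sigma^{\wtri}=\fg_2^{\delta!}+\fg_{\geq 3}^{\wtri}$, apply parts (i) and (ii) of Lemma \ref{ad_on_gd} to get $\Ad(g)(\sigma^{\wtri})=\sigma^{g.\wtri}$, and observe that $g.\wtri\in\btri$. The only addition is your explicit verification that $\wtri_{g.\delta}=g.\wtri_{\delta}$, which the paper leaves implicit.
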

\begin{proof}
Let $y\in\cO_x$, i.e. $y=\Ad(g)(x)$ for some $g\in G$.
If $x\in\cN_{\fg}^{\btri}$ it follows by (i) and (ii) in the above lemma that $y\in\Ad(g)(\sigma^{\wtri})=\sigma^{g.\wtri}$ and so $y\in\cN_{\fg}^{\btri}$. The other direction is clear.
\end{proof}
\subsection{Alternative definition of nilpotent pieces}
As remarked upon in the introduction, there exists an alternative definition of nilpotent pieces, given in \cite{ClarkePremet}. This idea arises from the definition of the unipotent pieces, originally defined by Lusztig in \cite{Lusztig1}. We use the same notation as before. Additionally let $\tilde{H}^{\blacktriangle}(\fg)=\bigcup_{\wtri\in\blacktriangle}\fg_{\geq 2}^{\wtri}$. 
\begin{defn}[Nilpotent CP-Pieces, \cite{ClarkePremet}]
Let $$H^{\blacktriangle}(\fg):=\tilde{H}^{\blacktriangle}(\fg)\setminus\bigcup_{\blacktriangle '}\tilde{H}^{\blacktriangle'}(\fg)$$
where $\blacktriangle'$ runs over $G$-orbits such that $\tilde{H}^{\blacktriangle'}(\fg)\subsetneq \tilde{H}^{\blacktriangle}(\fg)$. Then the sets $H^{\blacktriangle}(\fg)$ are the \textbf{nilpotent CP-pieces} in $\fg$.
\end{defn}
One can show that CP-pieces are disjoint and form a partition of the nilpotent variety in $\fg,$ see \cite[Theorem 7]{ClarkePremet}. It is in fact true that the nilpotent pieces defined by Clarke--Premet agree with the nilpotent pieces defined by Lusztig if $G$ is of classical type. Note that the CP-pieces come from the stratification of the nullcone, defined by Hesselink in \cite{Hesselink}, see \cite[Theorem 5]{ClarkePremet}. In \cite{Xue14}, Xue computes the nilpotent pieces in $\fg^*,$ using the definition of Clarke--Premet. As it is not clear whether the nilpotent pieces as introduced by Lusztig agree with the CP-pieces, the nilpotent pieces still have to be computed in $\fg$. 
%
\begin{thm}[{\cite[7.3, Remark 1]{ClarkePremet}}]
If $G$ is simple of classical type $A,B,C$ or $D$ we have $H^{\blacktriangle}(\fg)=\cN_{\fg}^{\blacktriangle}$ for all orbits $\btri$ in any characteristic.
\end{thm}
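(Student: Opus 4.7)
The plan is a partition-matching argument. Both $\{H^{\blacktriangle}(\fg)\}$ and $\{\cN_{\fg}^{\blacktriangle}\}$ are partitions of $\cN_{\fg}$ indexed by the same set of $G$-orbits of $\wtri$-equivalence classes in $\cD_G$: the first by \cite[Theorem 7]{ClarkePremet}, the second by the theorem quoted earlier in this section. Consequently, to prove equality piece-by-piece it suffices to establish the single inclusion $\cN_{\fg}^{\blacktriangle} \subseteq H^{\blacktriangle}(\fg)$ for every $\blacktriangle$; the matching partition property then forces equality orbit-by-orbit.

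First I would record the trivial containment $\sigma^{\wtri} = \pi^{-1}(\Sigma^{\wtri}) \subseteq \fg_{\geq 2}^{\wtri}$, which after taking the union over $\wtri \in \blacktriangle$ gives $\cN_{\fg}^{\blacktriangle} \subseteq \tilde{H}^{\blacktriangle}(\fg)$ at once. Decomposing $\tilde{H}^{\blacktriangle}(\fg)$ as the disjoint union of those $H^{\blacktriangle'}(\fg)$ with $\tilde{H}^{\blacktriangle'}(\fg) \subseteq \tilde{H}^{\blacktriangle}(\fg)$, the task reduces to verifying that every $x \in \sigma^{\wtri}$ fails to lie in any strictly smaller $\tilde{H}^{\blacktriangle'}(\fg)$. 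For this I would exploit the explicit classical-type parametrisations: nilpotent orbits in types $A, B, C, D$ are indexed by (decorated) partitions, and both the $\wtri$-class attached to such a partition by Lusztig's $\sigma^{\wtri}$-construction (as in \cite{Lusztig3}) and the Hesselink-optimal cocharacter attached to it from the CP side (see \cite{Hesselink} and \cite{ClarkePremet}) can be read directly from the partition data. Checking case-by-case in each of the four families that these two cocharacters coincide delivers the required matching of indexings; combined with the partition argument this yields the equality of pieces.

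The principal obstacle is the small-characteristic case, specifically $p = 2$ in types $B, C, D$, where the orbit parametrisation is no longer given by partitions alone (extra distinguished-type data intervenes) and where the standard $\sl_2$-triples used to attach a cocharacter to a nilpotent element no longer exist. This forces one to replace $\sl_2$-triples with the Premet–Pommerening notion of an associated cocharacter and to carry out the combinatorial matching on Jordan-block normal forms type-by-type, exactly as in \cite{ClarkePremet}. The conceptual step of the first paragraph is clean, but the bulk of the work in any honest proof lies in this case analysis; a more uniform argument identifying $\Sigma^{\wtri}$ with the GIT-optimal open stratum in $\fg_{\geq 2}^{\wtri}/\fg_{\geq 3}^{\wtri}$ would be desirable but does not seem to sidestep the type-by-type check.
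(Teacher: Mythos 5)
There is a genuine gap, and it is worth flagging that the paper does not actually prove this statement: it is quoted from Clarke--Premet, and the entire substance of that result lies in the type-by-type comparison that your proposal explicitly declines to carry out. Your partition-matching reduction in the first paragraph is a sound observation: if both $\{\cN_{\fg}^{\blacktriangle}\}$ (a partition by Lusztig's theorem for classical types) and $\{H^{\blacktriangle}(\fg)\}$ (a partition by \cite[Theorem 7]{ClarkePremet}) are partitions of $\cN_{\fg}$ indexed by the same set of $G$-orbits $\blacktriangle$, then a one-sided inclusion $\cN_{\fg}^{\blacktriangle}\subseteq H^{\blacktriangle}(\fg)$ for every $\blacktriangle$ forces equality. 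But the second paragraph never establishes that inclusion: you only prove $\cN_{\fg}^{\blacktriangle}\subseteq\tilde{H}^{\blacktriangle}(\fg)$ (with tilde), which is trivial and much weaker than containment in $H^{\blacktriangle}(\fg)=\tilde{H}^{\blacktriangle}(\fg)\setminus\bigcup_{\blacktriangle'}\tilde{H}^{\blacktriangle'}(\fg)$. The step you identify as remaining --- that no element of $\sigma^{\wtri}$ falls into a strictly smaller $\tilde{H}^{\blacktriangle'}$ --- is precisely the content of the theorem, and the proposal leaves it at ``checking case-by-case.''

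Two further points weaken the proposal. First, the ``cocharacter matching'' plan, if it were carried out (showing that for each nilpotent orbit $\cO$ the index $\blacktriangle$ of the Lusztig piece containing $\cO$ equals the index of the CP-piece containing $\cO$), would already give $\cN_{\fg}^{\blacktriangle}=H^{\blacktriangle}(\fg)$ directly; the partition-matching framing then does no work, so the logical architecture is redundant rather than clarifying. Second, the description of the characteristic-$2$ obstruction is off for the present setting: in this paper the set $\cD_G$ is defined by transfer of the Weyl-group orbit of cocharacters from characteristic $0$ (Section \ref{cDG}), not via associated cocharacters in the Pommerening--Premet sense, and the Lusztig index of an orbit is read off from the explicit combinatorics of \cite{Lusztig3}. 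Replacing $\sl_2$-triples by associated cocharacters is relevant to the Hesselink/Kempf--Rousseau side, but it is not where the difficulty in matching the two labellings actually lives. In short, the conceptual frame is fine but the proof content --- the combinatorial identification of $\blacktriangle$ on both sides, especially for types $B$, $C$, $D$ with $p=2$ --- is entirely outstanding.
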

Again, this problem has not been solved for $G$ of exceptional type in bad characteristic yet, but we hope for the nilpotent pieces and the CP-pieces to agree in all cases.
\subsection{The diagonal cases}\label{diag}
Let $e\in\fg$ be a nilpotent orbit representative, where $\fg:=\Lie(G)$ for a group $G$ of exceptional type. Then $e$ is either in an \textbf{exceptional class}, that is a class that only occurs in bad characteristic, or $e$ is in a non-exceptional class, see \cite[Theorem 9.1 and Tables 22.1.1-22.1.5]{LiebeckSeitz}. Now each class gives rise to a so-called $T$-labelling of the Dynkin diagram which in good characteristic is the weighted Dynkin Diagram $\delta_e$, see again \cite[Theorem 9.1 (ii) and Tables 22.1.1-22.1.5]{LiebeckSeitz}.\\
We will refer to the cases in which we check whether $e_\delta\in\cN_{\fg}^{\btri_{\delta_e}}$ as the \textbf{diagonal cases}.
\begin{lem}\label{diagcase}
Let $e\in\fg$ be a representative of a non-exceptional class, where $\fg$ is the Lie algebra of a group $G$ of exceptional type. Let $\delta_e$ be the weighted Dynkin diagram for the orbit of $e$, as noted above. Then $e\in\cN_{\fg}^{\btri_{\delta_e}}$.
\end{lem}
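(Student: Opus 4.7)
The plan is to exhibit a cocharacter $\delta \in \cD_G$ with $\wtri_\delta \in \btri_{\delta_e}$ such that $e \in \sigma^{\wtri_\delta}$. The natural candidate is the cocharacter arising from the $T$-labelling of $e$, which by hypothesis (non-exceptional class) agrees with the good-characteristic weighted Dynkin diagram $\delta_e$; this guarantees $\wtri_\delta \in \btri_{\delta_e}$ and reduces the whole problem to establishing $e \in \sigma^{\wtri_\delta}$.

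First I would invoke \cite[Theorem 9.1 and Tables 22.1.1-22.1.5]{LiebeckSeitz} to fix $e$ as a sum of root vectors $e_\al$ for positive roots $\al$ with $\delta(\al) = 2$, so that $e \in \fg_2^{\delta} \subseteq \fg_{\geq 2}^{\wtri_\delta}$ with vanishing $\fg_{\geq 3}^{\wtri_\delta}$-component. Under the isomorphism $\fg_2^{\wtri_\delta} \xrightarrow{\sim} \slant{\fg_{\geq 2}^{\wtri_\delta}}{\fg_{\geq 3}^{\wtri_\delta}}$, the image $\pi(e)$ corresponds to $e$ itself in $\fg_2^{\delta}$. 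Hence $e \in \sigma^{\wtri_\delta}$ is equivalent to $e \in \fg_2^{\delta!}$, which by definition amounts to the centralizer inclusion $G_e \subseteq G_{\geq 0}^{\delta}$.

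The second and essential step is to verify this centralizer inclusion. In good characteristic it is the standard Premet/Kempf--Rousseau statement that an associated cocharacter has this property. For a non-exceptional class in bad characteristic I would exploit the explicit centralizer data in \cite{LiebeckSeitz}, which gives a decomposition $G_e = C_e \cdot R_u(G_e)$ with $C_e$ a reductive complement commuting with $\delta(\k^\+)$ (hence lying in $C_G(\delta(\k^\+)) = G_0^{\delta}$) and with $R_u(G_e)$ generated by root subgroups $U_\al$ for roots $\al$ satisfying $\delta(\al) > 0$ (hence contained in $G_{\geq 1}^{\delta}$). Combining these two inclusions gives $G_e \subseteq G_{\geq 0}^{\delta}$, and therefore $e \in \fg_2^{\delta!} \subseteq \sigma^{\wtri_\delta} \subseteq \cN_{\fg}^{\btri_{\delta_e}}$.

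The main obstacle is precisely this centralizer calculation: one must justify, uniformly across the non-exceptional classes, that both halves of the decomposition $G_e = C_e \cdot R_u(G_e)$ respect the $\delta$-grading in bad characteristic. A cleaner alternative that sidesteps explicit centralizer inspection is a dimension count: since by definition a non-exceptional class in bad characteristic has the same dimension as the matching class in good characteristic, $\dim G_e$ is preserved, so the $\Ad(G_0^{\delta})$-orbit of $e$ still attains the dimension of $\fg_2^{\delta}$ and is therefore dense in it. The same argument that produces $\cO_e = \fg_2^{\delta!}$ in the good-characteristic remark then runs in bad characteristic via \cite[1.2.(a)]{Lusztig3}, yielding $e \in \fg_2^{\delta!}$ directly. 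Either route finishes the proof.
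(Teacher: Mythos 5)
Your primary route is essentially the paper's proof: the paper cites \cite[Lemma 15.3, Lemma 15.4, 16.1.1]{LiebeckSeitz} for $e\in\fg_2^{\delta_e}$ and \cite[Theorem 9.1(ii)(a), Theorem 16.1(ii)(a)]{LiebeckSeitz} for $G_e\subseteq G_{\geq 0}^{\delta_e}$, and the decomposition $G_e=C_e\cdot R_u(G_e)$ with $C_e\subseteq G_0^{\delta}$ and $R_u(G_e)\subseteq G_{\geq 1}^{\delta}$ that you invoke is precisely what those cited theorems encode, so this is the same argument in slightly unpacked form.

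The ``cleaner alternative'' via dimension count, however, has a real gap. To conclude that the $\Ad(G_0^{\delta})$-orbit of $e$ is dense in $\fg_2^{\delta}$ you need $\dim G_0^{\delta}-\dim(G_e\cap G_0^{\delta})=\dim\fg_2^{\delta}$, i.e.\ control of $\dim(G_e\cap G_0^{\delta})$, not of $\dim G_e$ alone. Even if $\dim G_e$ agrees with its good-characteristic value, a priori the dimension could be distributed differently across the $\delta$-grading of the centralizer in bad characteristic (this is exactly what happens for exceptional classes). To get the required control you would have to re-import from Liebeck--Seitz the compatibility of the reductive/unipotent-radical splitting of $G_e$ with the $\delta$-grading, which is precisely the centralizer information the alternative was supposed to avoid. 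So the dimension count does not actually sidestep the structural input; it only relocates it.
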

\begin{proof}
By \cite[Lemma 15.3., Lemma 15.4. and 16.1.1]{LiebeckSeitz} we have $e\in\fg_2^{\delta_e}$ if the class of $e$ is non-exceptional. Furthermore \cite[Theorem 9.1.(ii)(a) and Theorem 16.1.(ii)(a)]{LiebeckSeitz} show that $G_e\subseteq G_{\geq 0}^{\delta_e}$. But then it follows automatically that $e\in\fg_2^{\delta!}$ by definition, so $e\in\cN_{\fg}^{\btri_{\delta_e}}$ as claimed.
\end{proof}
This means that we do not need to do any computational work to decide whether $e\in\cN_{\fg}^{\btri_{\delta_e}}$. In these cases, we will use this lemma instead.
\subsection{The regular piece}\label{regularpiece}
Let $\delta$ be the map corresponding to the weighted Dynkin diagram with weight 2 for every simple root. This is known to always parametrise a nilpotent orbit, see \cite[Chapter 13.1]{Carter85}. We will call this the \textbf{regular diagram} and the corresponding piece the \textbf{regular piece} since such elements are usually called regular. 
In this case, we have $\fg_{\geq 2}^{\wtri}=\bigoplus_{\alpha\in\Phi^+}\fg_{\alpha}$ and $\fg_2^{\wtri}=\bigoplus_{\alpha\in\Pi}\fg_{\alpha}$.
\begin{prop}[The regular piece]\label{regpiece}
Let $\cN_{\fg}^{\btri}$ be the regular piece. Then
$\cN_{\fg}^{\btri}=\cO_x$, the nilpotent orbit of $x=\sum_{\alpha\in\Pi}e_{\alpha}$ with $0\neq e_{\alpha}\in\fg_{\alpha}$ for all $\al\in\Pi$.
\end{prop}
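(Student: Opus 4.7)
The plan is to use the $G$-invariance of $\cN_{\fg}^{\btri}$ from Lemma~\ref{ad_on_gd} to reduce to analysing the single set $\sigma^{\wtri_{\delta}}$ for the fixed representative $\delta$. By the preceding proposition I may assume $G$ is semisimple adjoint. For the regular diagram one has $\delta(\alpha)=2\hgt(\alpha)$ for every $\alpha\in\Phi$, so $\fg_2^{\delta}=\bigoplus_{\alpha\in\Pi}\fg_{\alpha}$, $\fg_{\geq 2}^{\delta}=\Lie(U)=\bigoplus_{\alpha\in\Phi^+}\fg_{\alpha}$, $\fg_{\geq 3}^{\delta}=\bigoplus_{\hgt(\alpha)\geq 2}\fg_{\alpha}$, and $G_{\geq 0}^{\delta}=B$. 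Since $\Ad(g)\sigma^{\wtri}=\sigma^{g.\wtri}$ by Lemma~\ref{ad_on_gd}, one has $\cN_{\fg}^{\btri}=G\cdot\sigma^{\wtri_{\delta}}$, so it suffices to prove (i) $x\in\sigma^{\wtri_{\delta}}$ and (ii) $\sigma^{\wtri_{\delta}}\subseteq\cO_x$.

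The next step is to identify
\[
\fg_{2}^{\delta!}=\Bigl\{\,\sum_{\alpha\in\Pi}c_{\alpha}e_{\alpha}\ \Big|\ c_{\alpha}\in\k^{\+}\text{ for all }\alpha\in\Pi\,\Bigr\}.
\]
If some $c_{\alpha_0}=0$, then $\beta-\alpha_0\notin\Phi$ for every $\beta\in\Pi\setminus\{\alpha_0\}$ forces $[e_{-\alpha_0},e_{\beta}]=0$, so $u_{-\alpha_0}(s)$ centralises $y=\sum c_{\alpha}e_{\alpha}$ for all $s\in\k$; since $u_{-\alpha_0}(s)\notin B$ for $s\neq 0$, this violates $G_y\subseteq G_{\geq 0}^{\delta}=B$. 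Conversely, if all $c_{\alpha}\neq 0$, then $\Pi$ is a $\Z$-basis of $X(T)=\Z\Phi$ (because $G$ is adjoint), so $T$ acts transitively on such tuples. Since $\fg_{2}^{\delta!}$ is $T$-stable by Lemma~\ref{ad_on_gd}(ii) (as $t.\delta=\delta$ for $t\in T$), it is enough to check $x=\sum e_{\alpha}\in\fg_{2}^{\delta!}$, which reduces to the classical fact $C_G(x)\subseteq B$ for the regular nilpotent element, valid in any characteristic.

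With $\fg_{2}^{\delta!}$ in hand, $\sigma^{\wtri_{\delta}}=\{\,y+z:y\in\fg_{2}^{\delta!},\,z\in\fg_{\geq 3}^{\delta}\,\}$ and in particular $x\in\sigma^{\wtri_{\delta}}$, establishing (i). For (ii), given $v=y+z\in\sigma^{\wtri_{\delta}}$, I would first conjugate by a suitable $t\in T$ to reduce to $y=x$, and then show that the $U$-orbit of $x$ already exhausts $x+\fg_{\geq 3}^{\delta}$. The inclusion $U\cdot x\subseteq x+\fg_{\geq 3}^{\delta}$ holds because conjugation by any $u_{\beta}(s)$ with $\beta\in\Phi^+$ sends $e_{\alpha}$ to $e_{\alpha}$ plus terms of height $\geq 2$; $U\cdot x$ is closed (orbit of a unipotent group on an affine variety) and irreducible, and a dimension count
\[
\dim(U\cdot x)=\dim U-\dim C_U(x)=|\Phi^+|-\operatorname{rank}(G)=\dim\fg_{\geq 3}^{\delta}
\]
then forces equality. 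The input $\dim C_U(x)=\operatorname{rank}(G)$ follows from $C_G(x)\subseteq B$, the triviality of $C_G(x)\cap T=\{t\mid\alpha(t)=1\text{ for all }\alpha\in\Pi\}$ (automatic for adjoint $G$), and the identity $\dim C_G(x)=\operatorname{rank}(G)$ for the regular nilpotent.

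The main obstacle lies in invoking the two classical properties of the regular nilpotent in arbitrary characteristic: $C_G(x)\subseteq B$ and $\dim C_G(x)=\operatorname{rank}(G)$. Both persist in bad characteristic; the dimension identity follows from Steinberg's theorem that the nilpotent variety has dimension $\dim G-\operatorname{rank}(G)$ together with the regular orbit being the open dense one, or alternatively from a direct calculation of $\ker(\ad x)$ in the Chevalley basis. With these inputs the remainder is a clean $T$-action and dimension count.
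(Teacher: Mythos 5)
Your argument is correct, but it differs from the paper's at both crucial junctions. For showing that $y=\sum_{\alpha\in\Pi}c_{\alpha}e_{\alpha}$ with some $c_{\alpha_0}=0$ fails to lie in $\fg_2^{\delta!}$, you note that $\alpha-\alpha_0\notin\Phi$ for distinct simple roots $\alpha,\alpha_0$ and exhibit $u_{-\alpha_0}(s)\in C_G(y)\setminus B$ directly; the paper instead constructs the more elaborate element $g=u_{\alpha_0}(1)\,t\,n_{s_{\alpha_0}}\,u_{\alpha_0}(-1)$ and verifies $\Ad(g)(y)=y$ by a chain of explicit Chevalley-basis computations using \cite{Geck}. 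Your witness is simpler. For the converse, the paper proves $C_G(x)\subseteq B$ from scratch by a Bruhat-decomposition argument ($n_w\neq 1$ forces a nonzero negative-root component, which the upper-triangular factor $u't$ cannot cancel), whereas you invoke the classical fact that the centraliser of a regular nilpotent lies in the Borel in all characteristics; both are legitimate, yours shorter but less self-contained. Your final step — showing $\sigma^{\wtri}\subseteq\cO_x$ by closedness and the dimension count $\dim(U\cdot x)=|\Phi^+|-\operatorname{rank}G=\dim\fg_{\geq 3}^{\delta}$ — explicitly fills a step that the paper leaves implicit: the paper concludes after identifying $\fg_2^{\delta!}$, tacitly relying on the standard characterisation of the regular orbit intersected with $\Lie(U)$ as the set of elements whose simple-root coefficients are all nonzero. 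Including the dimension argument, as you do, makes that closure genuinely explicit. Minor stylistic point: when you write ``$C_U(x)$'' in the dimension count you really mean $C_G(x)$, which happens to lie in $U$ since $C_G(x)\cap T=Z(G)=1$ for $G$ adjoint and $C_G(x)\subseteq B$; phrasing it that way would be cleaner.
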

\begin{proof}
Let $\delta\in\wtri\in\btri$ correspond to the regular diagram and suppose that $y=\sum_{\beta\in\Pi}\lambda_{\beta}e_{\beta}\in\fg_2^{\delta}$ such that $\lambda_{\alpha}=0$ for some $\alpha\in\Pi$. We fix this $\al$ for the rest of the proof. For $\gamma,\beta\in\Phi$ let $p_{\beta,\gamma},q_{\beta,\gamma}\in\N$ be so that
\begin{align*}
 \beta+p_{\beta,\gamma}\gamma&\in\Phi &\text{and}& &\beta+(p_{\beta,\gamma}+1)\gamma&\notin\Phi,\\
\beta-q_{\beta,\gamma}\gamma&\in\Phi &\text{and}& &\beta-(q_{\beta,\gamma}+1)\gamma&\notin\Phi,
\end{align*}
see \cite[VI, §1, no. 1.3, Proposition 9]{Bourbaki}.
For $\gamma,\beta\in\Pi$ and $\beta \neq \gamma$ we have $q_{\beta,\gamma}=0$.\\
Let $t\in T$ and $\al\in\Pi$ fixed as above, such that $\beta(t)=(-1)^{p_{\beta,\al}}$ for all $\beta\in\Pi\setminus\{\al\}$ and $\alpha(t)=-1$. Then set $g:=u_{\alpha}(1)tn_{s_{\alpha}}u_{\alpha}(-1)$ by the Bruhat decomposition \ref{bruhatdecomp}. This choice is possible by Dedekind's theorem (\cite[Chapter VIII, §4]{Lang}) and Lemma 16.2 C from \cite{Humphreys75}. As $n_{s_{\alpha}}=u_{\alpha}(1)u_{-\alpha}(-1)u_{\alpha}(1)$ and $u_{-\alpha}(-1)\notin G_{\geq 0}^{\delta}$ we have $n_{s_{\alpha}}\notin G_{\geq 0}^{\delta}$. However, $u_{\alpha}(1),\, t$ and $u_{\alpha}(-1)$ are in  $G_{\geq 0}^{\delta}$, so $g\notin G_{\geq 0}^{\delta}$. We want to show that $\Ad(g)(y)=y$.\\
To compute the action of $\Ad$ on $\fg$ we use Geck's paper \cite{Geck} and note that the action is defined up to sign which will not pose a problem in this case, as we will see in the following calculations.
$\Ad(u_{\alpha}(-1))(y)$: We have 
\begin{align*}
\Ad\left(u_{\al}(-1)\right)(y)&=\Ad(u_{\alpha}(-1))(\sum_{\beta\in\Pi}\lambda_{\beta}e_{\beta})\\
&=\sum_{\beta\in\Pi}\lambda_{\beta}\Ad(u_{\alpha}(-1))(e_{\beta})\\
&=\sum_{\beta\in\Pi}\lambda_{\beta}\sum_{\substack{k\geq 0\\\beta+k\alpha\in\Phi}}\binom{k+q_{\beta,\alpha}}{k}(-1)^k e_{\beta+k\alpha}\\
&=\sum_{\beta\in\Pi}\lambda_{\beta}\sum_{p_{\beta,\alpha}\geq k\geq 0}(-1)^k e_{\beta+k\alpha}=:y'
\intertext{ by \cite[4.10.]{Geck}\vspace*{6pt}\newline
$\Ad(n_{s_{\alpha}})(y')$: We have $s_{\alpha}(\beta)=\beta-(q_{\beta,\alpha}-p_{\beta,\alpha})\alpha=\beta+p_{\beta,\alpha}\alpha$ for $\alpha,\beta\in\Pi$, see \cite[Definition 2.3.]{Geck}. Then}
\Ad(n_{s_{\alpha}})(y')&=\Ad(n_{s_{\alpha}})\big(\hspace*{-8pt}\sum_{\substack{\beta\in\Pi,\\p_{\beta,\alpha}\geq k\geq 0}}\hspace*{-8pt}\lambda_{\beta}(-1)^k e_{\beta+k\alpha}\big)\\
&=\sum_{\substack{\beta\in\Pi,\\p_{\beta,\alpha}\geq k\geq 0}}\lambda_{\beta}(-1)^k \Ad(n_{s_{\alpha}})(e_{\beta+k\alpha})\\
&=\sum_{\substack{\beta\in\Pi,\\p_{\beta,\alpha}\geq k\geq 0}}\lambda_{\beta}(-1)^k (-1)^{p_{\beta,\alpha}-k}e_{\beta+(p_{\beta,\alpha}-k)\alpha}\\
&=\sum_{\substack{\beta\in\Pi,\\p_{\beta,\alpha}\geq k\geq 0}}\lambda_{\beta}(-1)^{p_{\beta,\alpha}}e_{\beta+(p_{\beta,\alpha}-k)\alpha}=:y''
\intertext{by \cite[Lemma 5.4.]{Geck}.\newline
$\Ad(t)(y'')$: An element $t'\in T$ acts on the elements $e_{\alpha}\in\fg$ by $\Ad(t')(e_{\alpha})=\alpha(t)e_{\alpha}$. It follows that with the above choice for $t$ we have}
\Ad(t)(y'')&=\Ad(t)\big(\hspace*{-8pt}\sum_{\substack{\beta\in\Pi,\\p_{\beta,\alpha}\geq k\geq 0}}\hspace*{-8pt}\lambda_{\beta}(-1)^{p_{\beta,\alpha}}e_{\beta+(p_{\beta,\alpha}-k)\alpha}\big)\\
&=\hspace*{-8pt}\sum_{\substack{\beta\in\Pi,\\p_{\beta,\alpha}\geq k\geq 0}}\hspace*{-8pt}\lambda_{\beta}(-1)^{p_{\beta,\alpha}}\Ad(t)(e_{\beta+(p_{\beta,\alpha}-k)\alpha})\\
&=\hspace*{-8pt}\sum_{\substack{\beta\in\Pi,\\p_{\beta,\alpha}\geq k\geq 0}}\hspace*{-8pt}\lambda_{\beta}(-1)^{p_{\beta,\alpha}}\beta(t)\alpha(t)^{p_{\beta,\alpha}-k}e_{\beta+(p_{\beta,\alpha}-k)\alpha}\\
&=\hspace*{-8pt}\sum_{\substack{\beta\in\Pi,\\p_{\beta,\alpha}\geq k\geq 0}}\hspace*{-8pt}\lambda_{\beta}(-1)^{p_{\beta,\al}-k}e_{\beta+(p_{\beta,\alpha}-k)\alpha}=:y'''.
\intertext{Now} 
\sum_{p_{\beta,\alpha}\geq k\geq 0}(-1)^k e_{\beta+k\alpha}&=\sum_{p_{\beta,\alpha}\geq k\geq 0}(-1)^{p_{\beta,\al}-k} e_{\beta+(p_{\beta,\al}-k)\alpha},
\end{align*}
therefore $y'''=y'$ by our choice for $t$.\\
$\Ad(u_{\alpha}(1))(y''')$: Finally,
\begin{align*}
\Ad(u_{\alpha}(1))(y''')=\Ad(u_{\alpha}(1))(y')=\Ad(u_{\alpha}(1))(\Ad(u_{\alpha}(-1))(y))=y.
\end{align*}
This shows that $g\in G_y$, so $y\notin\fg_2^{\delta!}$.

Conversely, let $x=\sum_{\gamma\in\Pi}e_{\gamma}$ as above and  $g\in G\setminus G_{\geq 0}^{\delta}$. We can write $g=u'tn_wu$ by the Bruhat decomposition \ref{bruhatdecomp}.\\
Let $n_w\neq 1$. Then, as $x\in\fg_{2}^{\delta}$, it follows that
\begin{align*}
x'&:=\Ad(u)(x)\in\fg_{\geq 2}^{\delta} \quad \text{ and }\\
x''&:=\Ad(n_w)(x')\in\fg_{\geq 2}^{\delta}\oplus\bigoplus_{\beta\in\Phi^-}\fg_{\beta}
\end{align*}
as there is at least one $\gamma\in\Phi^+$ with $\lambda_{\gamma}\neq 0$ such that $w(\gamma)\in\Phi^-$ for $x'=\sum_{\beta\in\Phi^+}\lambda_{\beta}e_{\beta}$ for $\langle e_{\beta}\rangle=\fg_{\beta}$ and $\lambda_{\beta}\in \k$ (otherwise $w(\Phi^+)\subseteq \Phi^+$ which is only possible for $w=\id$, see for example \cite[Theorem A.22]{MalleTesterman}). In particular, $\Ad(n_w)(x')\notin\fg_{\geq 2}^{\delta}$. But then $\Ad(u't)(x'')\notin\fg_{2}^{\delta}$ and so there is no element
$g\in G\setminus G_{\geq 0}^{\delta}$ that centralizes $x$.\\
As we can find $t\in T$ such that $\lambda_{\beta}=\beta(t)$ for all $\beta\in\Pi$ and $\lambda_{\beta}\in \k^{\+}$ (again by \cite[Chapter VIII, §4]{Lang} and \cite[Lemma 16.2 C]{Humphreys75}), it follows that $\fg_{2}^{\delta!}=\{ \sum_{\beta\in\Pi}\lambda_{\beta}e_{\beta}\mid \lambda_{\beta}\neq 0,\text{ for all }\beta\in\Pi\}$. This proves the claim.
\end{proof}
\section{A Computational Approach}\label{computation}
In order to compute the pieces as defined in Definition \ref{nilpotentpieces}, we will first present a few results on the action of $G$ on its Lie algebra and in particular on the sets $\fg_2^{\delta!}$. Having computed the pieces in the Lie algebras of exceptional type, it should be within reach to prove that Lusztig's nilpotent pieces agree with the CP-pieces.
\subsection{Computing the action of \texorpdfstring{$\Ad$}{}}\label{Ad-action}
As mentioned in section \ref{regularpiece} we can compute the action of $G$ via $\Ad$ on $\fg$ up to sign by following \cite{Geck}. We will consider the actions of a unipotent element in $B$, an element of the torus and a Weyl group representative respectively. As the action of $\Ad$ on $\fg$ is linear, it is enough to examine this action on the elements $e_{\alpha}$ where $\langle e_{\alpha}\rangle=\fg_{\alpha}$ for $\alpha\in\Phi$.
\begin{enumerate}
\item Action of an element $u_{\alpha}(c_{\alpha})\in U=\prod_{\beta\in\Phi^+}U_{\beta}$ on $\fg$ where $\alpha\in\Phi^+$ and $c_{\alpha}\in \k$. Let $x\in\fg$. Then
\begin{align*}
\Ad(u_{\alpha}(c_{\alpha}))(x)&=\exp(c_{\alpha}e_{\alpha})(x)\\
&=\sum_{i\geq 0}\frac{c_{\alpha}^i}{i!}(\ad(e_{\alpha}))^i(x),
\end{align*}
where we set $\ad(e_{\alpha})(x)=[e_{\alpha},x]$ and $(\ad(e_{\alpha}))^i(x)=\ad((\ad(e_{\alpha}))^{i-1}(x))$ for $i> 1$, by \cite[4.10. and section 5]{Geck}.\\
Note that we are dividing by $i!$ in this formula. In particular, we need to be careful if the characteristic of $\k$ is finite: In this case we will first compute $\Ad(u_{\alpha}(c_{\alpha}))(x)$ in characteristic 0 and then reduce modulo the characteristic of $\k$. This is possible since the coefficients $\frac{c_{\alpha}^i}{i!}$ are integers, see \cite[4.10. and Corollary 5.6.]{Geck}
\item Action of a representative $n_w\in N_G(T)\subseteq G$ of the element $w\in W$. First, define the map $n_{\alpha}(c):\fg\ra\fg$ for $c\in \k^{\+}$ and $\alpha\in\Pi$ by $n_{\alpha}(c):=\exp(ce_{\alpha})\exp(\inv{-c}e_{-\alpha})\exp(ce_{\alpha})$. There exist elements $h_i\in\ft$ for $i\in\{1,\ldots,|\Pi|\}$ that form a basis of $\ft$, i.e. $\ft=\langle h_i\mid i\in\{1,\ldots,|\Pi|\}\rangle$ and $\Pi=\{\alpha_1,\ldots,\alpha_n\}$ as in \cite[Definition 5.2.]{Geck}. Then
\begin{align*}
n_{\alpha_i}(c)(h_j)&=h_j-|(\alpha_i,\alpha_j^{\vee})|h_i,\\[0.2cm]
n_{\alpha_i}(c)(e_{\alpha})&=\begin{cases}
c^{-2}e_{-\alpha_i} &\alpha=\alpha_i,\\
c^2e_{\alpha_i} &\alpha=-\alpha_i,\\
-(-1)^{q_{\al,\al_i}+1}c^{-(\alpha,\alpha_i^{\vee})}e_{s_i(\alpha)}  &\mbox{else}.
\end{cases}
\end{align*}
Here $(\alpha_i,\alpha_j^{\vee})$ is the $(i,j)$-th entry of the Cartan matrix of $G$ and $q_{\al,\al_i}$ is as in the proof of Proposition \ref{regpiece} (see \cite[Lemma 5.4]{Geck}).
In this case we have $\Ad(n_{s_{\alpha}})(x)=n_{\alpha}(1)(x)$ for all $x\in\fg$.
\item Action of an element of the torus $T$ on $\fg$.  For every element in $T$ we can find $c\in \k^{\+}$ and $\alpha\in\Pi$ such that the action of this element can be represented by the map $h_{\alpha}(c):=n_{\alpha}(c)n_{\alpha}(-1)$, see \cite[Theorem 12.1.1]{Carter72}.
\end{enumerate}
\subsection{Practical aspects}
Let $x\in\fg$ be a nilpotent element and $\delta\in\cD_G$ correspond to a weighted Dynkin diagram. As we are interested in the set $\fg_2^{\delta}$ we will define the \enquote{parts} of certain nilpotent elements $x\in\fg$ that lie in it.
\begin{defn}[$\fg_i^{\delta}$-part]
Let $x\in\fg$ be a nilpotent element such that we can write $x=\sum_{\alpha\in\Phi}\lambda_{\alpha}e_{\alpha}$ where each $e_{\alpha}$ generates the subalgebra $\fg_{\alpha}$ for $\alpha\in\Phi$. Then we define for $i\in\Z$
\begin{align*}
[x]_{\fg_i^{\delta}}:=\sum_{\delta(\alpha)=i}\lambda_{\alpha}e_{\alpha}
\end{align*}
as the $\fg_i^{\delta}$-part of $x$. In particular, we have $x=[x]_{\fg_i^{\delta}}+\sum_{\delta(\alpha)\neq i}\lambda_{\alpha}e_{\alpha}$ for all $i\in\Z$.
\end{defn}
We want to decide whether $x\in\cN_{\fg}^ {\btri}$ for a given orbit $\btri$. This can be done by checking two things:
\begin{enumerate}
\item Check whether there exists some $g\in G$ such that $\Ad(g)(x)\in\fg_{\geq 2}^{\delta}$. If not, then $x$ cannot lie in $\cN_{\fg}^{\btri}$.
\item If (1) is fulfilled, we need to take a closer look at the $\fg_2^{\delta}$-part of $\Ad(g)(x)$. 
By the above definition we have $$\Ad(g)(x)=[\Ad(g)(x)]_{\fg_2^{\delta}}+\sum_{i\geq 3}[\Ad(g)(x)]_{\fg_i^{\delta}}$$ and so $\Ad(g)(x)\in\sigma^ {\delta}\subseteq\cN_{\fg}^{\btri}$ if $[\Ad(g)(x)]_{\fg_2^{\delta}}\in\fg_{2}^ {\delta!}$.
Conversely, suppose that there is no $g\in G$ such that $[\Ad(g)(x)]_{\fg_2^{\delta}}\in\fg_{2}^ {\delta!}$. Then $x\notin\cN_{\fg}^ {\btri}$ as otherwise we would have $x\in\sigma^{h.\delta}$ for some $h\in G$, $h.\delta\in\wtri\in\btri$ and by Lemma \ref{ad_on_gd} we would have $\Ad(\inv{h})(x)\in\sigma^{\delta}$. In particular $[\Ad(\inv{h})(x)]_{\fg_2^{\delta}}\in\fg_{2}^{\delta!}$.\\
We can therefore in fact concentrate on deciding whether\\ $[\Ad(g)(x)]_{\fg_2^{\delta}}\in\fg_{2}^{\delta!}$.
\end{enumerate}
By the Bruhat decomposition, every element $g\in G$ can be written uniquely as $g=u'tn_wu$, see Definition \ref{bruhatdecomp}.
Furthermore, every element $u\in\prod_{\alpha\in\Phi^+}U_{\alpha}$ can be written as $u=\prod_{\alpha\in\Phi^+}u_{\alpha}(c_{\alpha})$ for our fixed maps $u_{\alpha}:\k^ {\+}\ra U_{\alpha}$ and $c_{\alpha}\in \k^ {\+}$ as in \eqref{ualpha}. Therefore, the elements in $G$ can be parametrized by $w\in W$, $c_{\alpha},c'_{\alpha}\in \k^{\+}$, such that $u=\prod_{\alpha\in\Phi^+}u_{\alpha}(c_{\alpha})$, $u'=\prod_{\substack{\alpha\in\Phi^+\\w.\al\in\Phi^-}}u_{\alpha}(c'_{\alpha})$, and $t\in T$.\\
Let $g_w:=u'tn_wu\in G$ as above. Since $W$ is finite, it can be possible to decide whether $\Ad(g_w)(x)\in\fg_{\geq 2}^{\delta}$ for each $w\in W$. In practice, we might encounter restrains such as memory space or time, that might make it difficult to compute the whole Weyl group $W$ or decide whether $\Ad(g_w)(x)\in\fg_{\geq 2}^{\delta}$.\\
 Following section \ref{Ad-action} we can compute 
\begin{align}\label{ad(gw)}
\Ad(g_w)(x)=\Ad(u'tn_wu)(x)=\sum_{\alpha\in\Phi}\lambda_{\alpha}e_{\al}+\sum_{i=1}^ {|\Pi|}\mu_{i}h_{i}
\end{align} where the $h_i$ form a basis of $\ft$ and $\lambda_{\al},\mu_i\in\k$ depend on $g_w$ and can be determined by the rules in section \ref{Ad-action}. To check whether we can choose $g_w$ such that $\Ad(g_w)(x)\in\fg_{\geq 2}^{\delta}$ we need to solve the system of non-linear equations given by \eqref{ad(gw)}:
\begin{align}\label{nonlineqs}
\begin{split}
\lambda_{\alpha}&=0\quad\text{for all } \alpha\in\Phi\text{ with } \delta(\al)\leq 1,\\
\mu_i&=0\quad \text{for all } i\in\{1,\ldots,|\Pi|\},
\end{split}
\end{align}
where $\lambda_{\al},\mu_i$ are polynomials in the variables $c_{\alpha},c'_{\alpha}$ determined by $u=\prod_{\alpha\in\Phi^+}u_{\alpha}(c_{\alpha})$ and $u'=\prod_{\alpha\in\Phi^+}u_{\alpha}(c'_{\alpha})$. The entries in $t$ are as given in section \ref{Ad-action} (3).
Using Groebner bases, we can decide whether the system can be solved and determine a solution.\\
If $\Ad(g_w)(x)\in\fg_{\geq 2}^{\delta}$ we continue with step (2), otherwise we check whether $\Ad(g_w)(x)\in\fg_{\geq 2}^{\delta}$ for the next $w\in W$.\\
This process can be simplified by considering Weyl group elements $w\in W$ of \textbf{weight} $0$.
\begin{defn}
Let $\delta\in\cD_G$ be a map arising from a weighted Dynkin diagram and let $w=s_{\al_1}\cdots s_{\al_r}\in W$ for some $r\in\N$ and $\al_1,\ldots,\al_r\in\Pi$. Then we say that $w$ has \textbf{weight} $0$ if $\delta(\al_1)=\cdots =\delta(\al_r)=0$. These elements form a subgroup of $W$ which we will denote by $W_0^{\delta}:=\langle s_{\al}\mid \al\in\Pi,~\delta(\al)=0\rangle$.
\end{defn}
We can show that $n_w$ fixes the set $\fg_{\geq 2}^{\delta}$ if $w$ has weight $0$. It turns out that an even stronger result is true, as stated in the following lemma.
\begin{lem}\label{checkg2d}
Let $\delta\in\cD_G$ be a map arising from a weighted Dynkin diagram.
Let $g_w=u'tn_wu\in G$ as in Definition \ref{bruhatdecomp} and $y\in\fg_{\geq 2}^{\delta}$. Then:
\begin{enumerate}[(i)]
\item The element $g_w$ is contained in $G_{\geq 0}^{\delta}$ if and only if $w\in W_0^{\delta}$.
\item If $g_w\in G_{\geq 0}^{\delta},$ then $Ad(g_w)(y)\in\fg_{\geq 2}^{\delta}$.
\end{enumerate}
In particular, $\Ad(g_w)(y)\in\fg_{\geq 2}^{\delta}$ if and only if $\Ad(n_wu)(y)\in\fg_{\geq 2}^{\delta}$.
\end{lem}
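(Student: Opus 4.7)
My plan for (i) is to reduce the Bruhat-factor question to its Weyl-group part, and then appeal to the Bruhat decomposition of the parabolic $P := G_{\geq 0}^{\delta}$. Concretely, since by Remark \ref{remgamma} we may assume $\delta$ takes values in $\{0,1,2\}$ on $\Pi$, every positive root $\alpha = \sum_{\beta\in\Pi} c_\beta \beta$ satisfies $\delta(\alpha) = \sum c_\beta \delta(\beta) \geq 0$. Hence $U \subseteq P$, and since $U_w^- \subseteq U$ and $T \subseteq P$ tautologically, the factors $u, u', t$ always lie in $P$. This reduces (i) to the equivalence $n_w \in P \Leftrightarrow w \in W_0^{\delta}$. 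The cleanest justification is via the standard Bruhat decomposition of a parabolic: since $P \supseteq B$ is the standard parabolic with Levi $G_0^{\delta}$, we have $P = \bigsqcup_{w'\in W_P} B n_{w'} B$ where $W_P$ is the Weyl group of the Levi; $W_P$ is generated exactly by the simple reflections $s_\alpha$ with $\delta(\alpha)=0$, i.e.\ $W_P = W_0^{\delta}$. Disjointness of the Bruhat cells then yields $n_w \in P \iff w \in W_0^{\delta}$.

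For (ii), I plan to prove the stronger statement that $\Ad(P)$ preserves $\fg_{\geq j}^{\delta}$ for every $j$, by checking this on generators of $P$. For $t\in T$, $\Ad(t)$ acts as multiplication by $\alpha(t)$ on $\fg_\alpha$, so it preserves each $\fg_i^{\delta}$ individually. For $\alpha\in\Phi$ with $\delta(\alpha)\geq 0$, the formula recalled in section \ref{Ad-action} gives $\Ad(u_\alpha(c))(e_\beta) \in \bigoplus_{k\geq 0}\fg_{\beta+k\alpha}$, and $\delta(\beta+k\alpha) = \delta(\beta) + k\delta(\alpha) \geq \delta(\beta)$, so weights can only rise. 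Since $P$ is generated by $T$ together with these $U_\alpha$ (including those with $\delta(\alpha)=0$, coming from $L$, and those with $\delta(\alpha)>0$, coming from the unipotent radical), $\Ad(P)$ preserves $\fg_{\geq 2}^{\delta}$, as required.

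For the concluding \emph{in particular} clause, note that the argument of the first paragraph shows $u't \in P$ \emph{regardless of} $w$, since $u' \in U \subseteq P$ and $t \in T \subseteq P$. By the second paragraph, $\Ad(u't)$ is a linear bijection of $\fg$ mapping $\fg_{\geq 2}^{\delta}$ into itself, hence onto itself (by finite-dimensionality), and so the same holds for its inverse. Writing $\Ad(g_w)(y) = \Ad(u't)\bigl(\Ad(n_w u)(y)\bigr)$ therefore gives $\Ad(g_w)(y) \in \fg_{\geq 2}^{\delta}$ iff $\Ad(n_w u)(y) \in \fg_{\geq 2}^{\delta}$.

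The only mildly nontrivial ingredient is the identification of the Levi's Weyl group with $W_0^{\delta}$ needed for (i); this is standard parabolic theory but should be cited with care. Everything else is routine once the positive-weight observation $\delta(\Phi^+)\subseteq \Z_{\geq 0}$ has been made, which is the real engine of the lemma.
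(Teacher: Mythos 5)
Your proof is correct and takes essentially the same route as the paper's. For (i) the paper argues the forward direction by hand (writing $n_{s_\alpha}=u_\alpha(1)u_{-\alpha}(-1)u_\alpha(1)$ and checking each factor) before invoking the parabolic Bruhat decomposition $G_{\geq 0}^{\delta}=P_I=BW_IB$ for the converse, whereas you use the disjointness of the parabolic Bruhat cells to get both implications at once; this is a minor streamlining. For (ii) the paper splits a generic element of $G_{\geq 0}^{\delta}$ into Bruhat factors and checks the three cases $u_\alpha(c)$, $t$, $n_w$ separately, while you check only on the generating set $T\cup\{U_\alpha : \delta(\alpha)\geq 0\}$ — again the same computation with the redundant $n_w$ case removed. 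The "in particular" argument, cancelling $u't\in G_{\geq 0}^{\delta}$ and using that $\Ad(u't)$ is a bijection preserving $\fg_{\geq 2}^{\delta}$, coincides with the paper's.
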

\begin{proof}~
\begin{enumerate}[(i)]
\item Write $g_w=u'tn_wu$ using the Bruhat decomposition with the notation from Definition \ref{bruhatdecomp}. Clearly, any elements $u\in U$ and $t\in T$ are contained in $G_{\geq 0}^ {\delta},$ so $u,u',t\in G_{\geq 0}^ {\delta}$ and therefore $g_w\in G_{\geq 0}^ {\delta}$ if and only if $n_w\in G_{\geq 0}^ {\delta}$.
Furthermore, let $s_{\alpha}\in W$ be the reflection for the root $\alpha\in\Phi$. Then we can choose $n_{s_{\alpha}}=u_{\alpha}(1)u_{-\alpha}(-1)u_{\alpha}(1),$ see Section \cite[1.9, p.19]{Carter85}. Note also that $s_{\al}=s_{-\al}$ in $W,$ therefore $n_{s_{\al}}=n_{s_{-\al}}t$ for some $t\in T$. It follows that if $\delta(\al)\geq 0$ we have $u_{\alpha}(1)\in G_{\geq 0}^{\delta}$ and therefore $n_{s_{\alpha}}=u_{\alpha}(1)u_{-\alpha}(-1)u_{\alpha}(1)\in G_{\geq 0}^{\delta}$ if and only if $\delta(\al)=0$. If, on the other hand, $\delta(\al)\leq 0,$ we have $u_{-\alpha}(1)\in G_{\geq 0}^{\delta}$ as well as $t\in G_{\geq 0}^{\delta}$ and therefore $n_{s_{\alpha}}=n_{s_{-\al}}t=u_{-\alpha}(1)u_{\alpha}(-1)u_{-\alpha}(1)t\in G_{\geq 0}^{\delta}$ if and only if $\delta(\al)=0$. This means that for $w=s_{\alpha_1}\cdots s_{\alpha_r}$ we have $n_w\in G_{\geq 0}^{\delta}$ if $\delta(\alpha_1)=\cdots=\delta(\alpha_r)=0,$ i.e.\ if $w$ has weight $0$.\\
 Conversely, let $w=s_{\alpha_1}\cdots s_{\alpha_r}$ be a reduced form of $w\in W$. Since $\delta$ arises from a weighted Dynkin diagram, we have $\delta(\al)\geq 0$ for all $\al\in\Phi^+$. Then $G_{\geq 0}^{\delta}=P_I=BW_IB$ is a standard parabolic subgroup, where $\Phi_I=\{\al\in\Phi\mid \delta(\al)=0\}$ and $W_ I=\langle s_{\al}\mid \al\in\Phi_I\rangle$. Clearly, $W_0^{\delta}=W_I$. By the uniqueness of the Bruhat decomposition in Theorem \ref{bruhatdecomp}, $n_w\in G_{\geq 0}^{\delta}$ if and only if $w\in W_I$. Therefore, $\delta(\alpha_i)= 0$ for all $i\in\{1,\ldots,r\}$ if $n_w\in G_{\geq 0}^{\delta}$.  This proves the claim. 
\item To see that $\Ad(g)(y)\in\fg_{\geq 2}^{\delta},$ it is enough to show that  for each $e_{\beta}\in \fg_{\beta}$ with $\beta\in\Phi$ such that $\delta(\beta)\geq 2$ the following claims are true:
\begin{enumerate}[(1),itemsep=5pt]
\item $\Ad(u_{\al}(c_{\al}))(e_{\beta})\in\fg_{\geq 2}^{\delta}$ for all $c_{\al}\in\k,$
\item $\Ad(t)(e_{\beta})\in\fg_{\geq 2}^{\delta}$ for all $t\in T,$ and 
\item$\Ad(n_w)(e_{\beta})\in\fg_{\geq 2}^{\delta}$ for all $n_w\in G_{\geq 0}^{\delta}$.
\end{enumerate} \vspace*{0.2cm}
\begin{enumerate}
\item[To (1):]
We have
\begin{align*}
\Ad(u_{\alpha}(t_{\alpha}))(e_{\beta})=\sum_{\substack{\beta+k\alpha\in\Phi\\ k\geq 0}}t_{\alpha}^kc_{k,\alpha,\beta}e_{\beta+k\alpha}\in\fg_{\geq 2}^{\delta},
\end{align*}
for some $t_{\alpha},c_{k,\alpha,\beta}\in \k$ and $\delta(\beta+k\alpha)=\delta(\beta)+k\delta(\alpha)\geq 2$ as both $k\geq 0$ and $\delta(\alpha)\geq 0$ for $\al\in\Phi$.
\item[To (2):]
For $t\in T$ we have $\Ad(t)(e_{\beta})=\beta(t)e_{\beta}\in\fg_{\beta},$ so the action of $T$ stabilises $\fg_{\geq 2}^ {\delta}$.
\item[To (3):] By Section \ref{Ad-action}(2) we have $\Ad(n_w)(\fg_{\beta})=\fg_{w(\beta)}$. As before, write $w=s_{\alpha_1}\cdots s_{\alpha_r}$ where $\delta(\alpha_1)=\cdots=\delta(\alpha_r)=0$ and $\alpha_i\in\Pi$ for all $i$ (since $\delta(\beta)\geq 2,$ the reflection $s_{\beta}$ is not in this product).
For two roots $\alpha,\beta\in\Phi$ we have $s_{\alpha}(\beta)=\beta+k_{\al,\beta}\al$ for some $k_{\al,\beta}\in\Z$. If $\delta(\al)=0,$ then $\delta(s_{\alpha}(\beta))=\delta(\beta)$. Iteratively, we get $\delta(w(\beta))=\delta(\beta)$ and so $\Ad(n_w)(\fg_{\beta})=\fg_{w(\beta)}\in\fg_{\geq 2}^{\delta}$.
\end{enumerate}
Finally, let $w\in W$ be an arbitrary element of the Weyl group and suppose that  $\Ad(g_w)(y)=x\in\fg_{\geq 2}^{\delta}$.\\
 As $u't\in G_{\geq 0}^{\delta},$ so is $\inv{(u't)}\in G_{\geq 0}^{\delta}$ and therefore by the above calculations\\ $\Ad(n_wu)(y)=\Ad(\inv{(u't)})(x)\in\fg_{\geq 2}^{\delta}$.\\
Conversely, suppose that $\Ad(n_wu)(y)=x\in\fg_{\geq 2}^{\delta}$. Since, as before, $u't\in G_{\geq 0}^{\delta},$ it follows that $\Ad(u'tn_wu)(y)=\Ad(u't)(x)\in\fg_{\geq 2}^{\delta}$. This proves the claim.
\end{enumerate}
\vspace*{-0.5cm}
\end{proof}
In particular, this shows that for $g\in G_{\geq 0}^{\delta}$ we do not have to check whether $\Ad(g)(y)$ is in $\fg_{\geq 2}^{\delta}$, as the above lemma states that this is the case whenever $y\in\fg_{\geq 2}^{\delta}$. Additionally, let $u=u'u''$ for $u\in U,\,u'\in\prod_{\substack{\alpha\in\Phi^+\\w.\alpha\in\Phi^+}}U_{\alpha}$ and $u''\in U_w^-$. Then $n_wu=n_wu'\inv{n_w}n_wu''$ and $n_wu'\inv{n_w}\in U\subseteq G_{\geq 0}^{\delta}$, see \cite[Theorem 8.17.(e)]{MalleTesterman}. This means, it is even enough to check whether $\Ad(n_wu'')(y)\in\fg_{\geq 2}^{\delta}$.\\
Another direct consequence is the following: Suppose we already know that $\Ad(n_wu)(y)\in\fg_{\geq 2}^{\delta}$ (or $\Ad(n_wu)(y)\notin\fg_{\geq 2}^{\delta}$) for some $w\in W$. Then for all $w'\in W$ with weight $0$ we have 
$\Ad(n_{w'}n_wu)(y)=\Ad(n_{w'w}u)(y)\in\fg_{\geq 2}^{\delta}$ (resp. $\Ad(n_{w'w}u)(y)\notin\fg_{\geq 2}^{\delta}$).\\
There is a similar result when we consider the $\fg_2^{\delta}$-part and the set $\fg_2^{\delta!}$.
\begin{lem}\label{nw_u}
Let $x\in\fg$ be a nilpotent element and $w\in W$, $u\in U_w^-$ with $\Ad(n_wu)(x)\in\fg_{\geq 2}^{\delta}$ and $[\Ad(n_wu)(x)]_{\fg_2^{\delta}}\notin\fg_2^{\delta!}$.\\
Then we have $[\Ad(n_{w'w}u)(x)]_{\fg_2^{\delta}}\notin\fg_2^{\delta!}$ for any $w'\in W_0^{\delta}$.
\end{lem}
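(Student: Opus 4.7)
The plan is to reduce the statement to two facts about the action of $\Ad(n_{w'})$ when $w' \in W_0^{\delta}$: (a) it preserves the $\delta$-grading $\fg = \bigoplus_i \fg_i^{\delta}$, and (b) it stabilises the subset $\fg_2^{\delta!} \subseteq \fg_2^{\delta}$ bijectively. Once these are in place, the lemma follows by a direct substitution, with a small correction for the fact that $n_{w'w}$ and $n_{w'}n_w$ may differ by a torus element.

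For (a), the proof of Lemma \ref{checkg2d}(ii), case (3), already shows that for any $w' = s_{\alpha_1}\cdots s_{\alpha_r}$ with $\delta(\alpha_i) = 0$ and any root $\beta$, one has $\delta(w'(\beta)) = \delta(\beta)$. Since $\Ad(n_{w'})(\fg_\beta) = \fg_{w'(\beta)}$ by Section \ref{Ad-action}(2), the operator $\Ad(n_{w'})$ permutes the root spaces within each $\fg_i^{\delta}$, so it preserves the grading. In particular, for any $z \in \fg$,
\begin{equation*}
[\Ad(n_{w'})(z)]_{\fg_2^{\delta}} = \Ad(n_{w'})\bigl([z]_{\fg_2^{\delta}}\bigr).
\end{equation*}
For (b), Lemma \ref{ad_on_gd}(ii) gives $\Ad(n_{w'})(\fg_2^{\delta!}) = \fg_2^{(w'.\delta)!}$. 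Because $w'$ permutes each level set $\{\alpha : \delta(\alpha) = i\}$ within itself, one checks directly that $\fg_i^{w'.\delta} = \fg_i^{\delta}$ for all $i$, so $G_{\geq 0}^{w'.\delta} = G_{\geq 0}^{\delta}$ and hence $\fg_2^{(w'.\delta)!} = \fg_2^{\delta!}$. Thus $\Ad(n_{w'})$ restricts to a bijection of $\fg_2^{\delta!}$ onto itself.

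To combine these, write $n_{w'w} = \tilde{t}\,n_{w'}n_w$ for a suitable $\tilde{t} \in T$ accounting for the choice of representatives. Because $T \subseteq G_{\geq 0}^{\delta}$ acts on each $\fg_\alpha$ by the scalar $\alpha(t)$, the map $\Ad(\tilde{t})$ preserves both the grading and $\fg_2^{\delta!}$ by the same argument as in step (b). Combining with (a),
\begin{equation*}
[\Ad(n_{w'w}u)(x)]_{\fg_2^{\delta}} = \Ad(\tilde{t}\,n_{w'})\bigl([\Ad(n_w u)(x)]_{\fg_2^{\delta}}\bigr),
\end{equation*}
and by (b) the right-hand side lies in $\fg_2^{\delta!}$ if and only if $[\Ad(n_w u)(x)]_{\fg_2^{\delta}}$ does. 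Under the hypothesis that the latter fails, the former fails as well.

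I do not expect any real obstacle beyond bookkeeping. The most delicate point is fact (b): one must invoke the weight-zero condition on $w'$ both to conclude $\delta(w'(\beta)) = \delta(\beta)$ for all roots $\beta$ (not just $\beta$ with $\delta(\beta) = 2$) and to ensure that the parabolic $G_{\geq 0}^{\delta}$ is preserved, so that the defining condition $G_x \subseteq G_{\geq 0}^{\delta}$ of $\fg_2^{\delta!}$ transports correctly under $\Ad(n_{w'})$. Everything else reduces to properties already established in Lemmas \ref{ad_on_gd} and \ref{checkg2d}.
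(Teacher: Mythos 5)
Your proof is correct and follows essentially the same route as the paper: both reduce to the identity $[\Ad(n_{w'w}u)(x)]_{\fg_2^{\delta}}=\Ad(n_{w'})([\Ad(n_wu)(x)]_{\fg_2^{\delta}})$ (valid because $n_{w'}\in G_0^{\delta}$ preserves each graded piece $\fg_i^{\delta}$), and then show that passing to $y':=\Ad(n_{w'})(y)$ does not change membership in $\fg_2^{\delta!}$. The only differences are cosmetic: the paper re-derives inline the transport of a bad centraliser element (if $g\notin G_{\geq 0}^{\delta}$ fixes $y$ then $n_{w'}g n_{w'}^{-1}\notin G_{\geq 0}^{\delta}$ fixes $y'$), whereas you invoke Lemma \ref{ad_on_gd}(ii) together with the observation $G_{\geq 0}^{w'.\delta}=G_{\geq 0}^{\delta}$ to conclude directly that $\Ad(n_{w'})$ maps $\fg_2^{\delta!}$ onto itself; and you explicitly flag the torus factor in $n_{w'w}=\tilde{t}\,n_{w'}n_w$, a point the paper glosses over but which, as you correctly note, is harmless since $\tilde{t}\in T\subseteq G_0^{\delta}$ preserves both the grading and $\fg_2^{\delta!}$.
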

\begin{proof} 
As $\Ad(n_{w}u)(x)\in\fg_{\geq 2}^{\delta}$ it follows that $\Ad(n_{w'w}u)(x)\in\fg_{\geq 2}^{\delta}$ for any $w'\in W_0^{\delta}$.\\
For $g\in G_{0}^{\delta}$ we have (by a similar calculation as in the proof of Lemma \ref{checkg2d}) $\Ad(g)(\fg_i^{\delta})=\fg_i^{\delta}$ for each $i\in\Z$. Then 
\begin{align*}
\hspace*{1cm} [\Ad(n_{w'w}u)(x)]_{\fg_2^{\delta}}=\Ad(n_{w'})([\Ad(n_wu)(x)]_{\fg_2^{\delta}})
\end{align*}
since $n_{w'}\in G_0^{\delta}$.\\
For easier notation let $y:=[\Ad(n_wu)(x)]_{\fg_2^{\delta}}$ and $y':=[\Ad(n_{w'w}u)(x)]_{\fg_2^{\delta}}$. Suppose that there exists $g\in G\setminus G_{\geq 0}^{\delta}$ with $\Ad(g)(y)=y$. Then\\
 $\Ad(n_{w'}g\inv{n_{w'}})(y')=y'$. As $n_{w'}\in G_{\geq 0}^{\delta}$ and $g\notin G_{\geq 0}^{\delta}$ we have $n_{w'}g\inv{n_{w'}}\notin G_{\geq 0}^{\delta}$ and so $y'\notin \fg_2^ {\delta!}$.
\end{proof}
This lemma simplifies the calculations further: Suppose we already know that $\Ad(n_wu)(x)\notin\fg_{\geq 2}^{\delta!}$ for some $w\in W$. Then for all $w'\in W$ with weight $0$ we have 
$\Ad(n_{w'}n_wu)(x)=\Ad(n_{w'w}u)(x)\notin\fg_{\geq 2}^{\delta!}$. Therefore, we do not have to check $\Ad(n_wu)(x)$ if $w\in W$ is of the form $w=w'w''$, $w'\in W_0^{\delta}$, and we already know that $\Ad(n_{w''}u)(x)\notin\fg_{\geq 2}^{\delta!}$.\\
However, we do need some further results to justify focusing on elements of the form $\Ad(n_wu)(x)$.
\begin{lem}\label{nw.u.x}
Let $x\in\fg_{\geq 2}^{\delta}$, $u=\prod_{\beta\in\Phi^+}u_{\beta}(t_{\beta})\in U$, $t_{\beta}\in\k$ and $\delta(\beta)\geq 0$ for all $\al\in\Phi^+$. Let $\tilde{u}:=\prod_{\substack{\beta\in\Phi^+\\ \delta(\beta)=0}}u_{\beta}(t_{\beta})\in U$.\\
 Then $[\Ad(u)(x)]_{g_2^{\delta}}=\Ad(\tilde{u})([x]_{g_2^{\delta}})$.
\end{lem}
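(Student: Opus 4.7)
The plan is to factor $u$ through the normal subgroup
$U_{\geq 1}^{\delta}:=\langle U_{\beta}\mid \beta\in\Phi^+,\,\delta(\beta)\geq 1\rangle$
and then analyse $\Ad(\tilde u)$ and $\Ad(u'')$ separately in the weight grading. Since $\delta$ arises from a weighted Dynkin diagram we have $\delta(\beta)\geq 0$ for all $\beta\in\Phi^+$, and the set $\{\beta\in\Phi^+\mid \delta(\beta)\geq 1\}$ is closed under addition as well as under addition with weight-$0$ positive roots. Hence, by the Chevalley commutator relations, $U_{\geq 1}^{\delta}$ is a normal subgroup of $U$, and any ordered product $u=\prod_{\beta\in\Phi^+}u_{\beta}(t_{\beta})$ can be rewritten as $u=\tilde u\cdot u''$ with $u''\in U_{\geq 1}^{\delta}$.

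The key point in this factorisation is that commuting a factor $u_{\gamma}(s)$ with $\delta(\gamma)=0$ past a factor $u_{\beta}(t)$ with $\delta(\beta)\geq 1$ introduces only factors of the form $u_{i\gamma+j\beta}(\cdot)$ with $i,j\geq 1$, hence of weight $i\delta(\gamma)+j\delta(\beta)\geq 1$. Consequently, collecting the weight-$0$ factors of $u$ to the left only modifies its tail inside $U_{\geq 1}^{\delta}$ and preserves the parameters $t_{\beta}$ indexed by $\delta(\beta)=0$; the left-hand part produced is precisely $\tilde u$ as defined.

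Next I would verify two grading facts. First, for $\beta$ with $\delta(\beta)\geq 1$ and any $y\in\fg_{\geq 2}^{\delta}$, the expansion
\[
\Ad(u_{\beta}(s))(y)=\sum_{k\geq 0}\tfrac{s^k}{k!}\ad(e_{\beta})^k(y)
\]
shows that each $k\geq 1$ term lies in $\fg_{\geq 2+k\delta(\beta)}^{\delta}\subseteq\fg_{\geq 3}^{\delta}$, so $\Ad(u_{\beta}(s))(y)\equiv y\pmod{\fg_{\geq 3}^{\delta}}$. Iterating over the factors of $u''$ gives $\Ad(u'')(x)\equiv x\pmod{\fg_{\geq 3}^{\delta}}$, and in particular $[\Ad(u'')(x)]_{\fg_2^{\delta}}=[x]_{\fg_2^{\delta}}$. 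Second, for $\beta$ with $\delta(\beta)=0$ we have $\Ad(u_{\beta}(s))(e_{\gamma})=\sum_{k}c_{k}s^{k}e_{\gamma+k\beta}$ with every summand of weight $\delta(\gamma)$, so $\Ad(u_{\beta}(s))$ preserves each $\fg_j^{\delta}$. Iterating over the factors of $\tilde u$ shows that $\Ad(\tilde u)$ commutes with the projection $[\,\cdot\,]_{\fg_2^{\delta}}$.

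Assembling the pieces,
\[
[\Ad(u)(x)]_{\fg_2^{\delta}}=[\Ad(\tilde u)\Ad(u'')(x)]_{\fg_2^{\delta}}=\Ad(\tilde u)\bigl([\Ad(u'')(x)]_{\fg_2^{\delta}}\bigr)=\Ad(\tilde u)\bigl([x]_{\fg_2^{\delta}}\bigr),
\]
which is the claim. The main obstacle is the factorisation step: one must argue carefully, using the Chevalley commutator formula and the closure properties above, that rearranging $u$ into $\tilde u\cdot u''$ leaves the weight-$0$ parameters $t_{\beta}$ with $\delta(\beta)=0$ untouched. Once this structural statement is in place, the two weight-tracking computations are routine, and the lemma follows.
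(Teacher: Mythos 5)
Your proof is correct, but it takes a genuinely different route from the paper. The paper proves the identity directly and inductively, factor by factor across the ordered product defining $u$: it shows that for a single factor $u_{\beta}(c)$ acting on an element $y\in\fg_{\geq 2}^{\delta}$ one has $[\Ad(u_{\beta}(c))(y)]_{\fg_2^{\delta}}=\Ad(u_{\beta}(c))([y]_{\fg_2^{\delta}})$ when $\delta(\beta)=0$, and $[\Ad(u_{\beta}(c))(y)]_{\fg_2^{\delta}}=[y]_{\fg_2^{\delta}}$ when $\delta(\beta)>0$, and then iterates. That argument never touches the group-theoretic structure of $U$ and needs no commutator relations. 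You instead rewrite $u=\tilde u\cdot u''$ with $u''$ in the normal subgroup generated by the weight-$\geq 1$ root subgroups, and apply the same two per-factor weight computations to the two halves of the factorisation: $\Ad(u'')$ is trivial on $\fg_{\geq 2}^{\delta}/\fg_{\geq 3}^{\delta}$, while $\Ad(\tilde u)$ is $\fg_2^{\delta}$-graded. This is a somewhat more conceptual picture, at the cost of having to justify the factorisation step, which you rightly flag as the delicate part. That step can be made airtight quickly: the quotient map from $U$ onto $U$ modulo your $U_{\geq 1}^{\delta}$ identifies the quotient with $\prod_{\delta(\beta)=0}U_{\beta}$, kills exactly the weight-$\geq 1$ factors of $u$, and therefore sends $u$ to $\tilde u$; so the first component of $u$ under the semidirect product splitting is indeed $\tilde u$ with the same parameters $t_{\beta}$. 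Both proofs are valid; the paper's is shorter because it avoids the factorisation entirely and applies the per-factor observation inside the single induction.
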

\begin{proof}
We have $x=\sum_{\substack{\al\in\Phi^+\\ \delta(\al)\geq 2}}\lambda_{\al}e_{\al}$. Furthermore let $\beta\in\Phi^+$ and consider $\Ad(u_{\beta}(c))(x)$ for some $c\in \k$. We have
\begin{align*}
\Ad(u_{\beta}(c))(x)&=\sum_{\substack{\al\in\Phi^+\\ \delta(\al)\geq 2}}\lambda_{\al}\sum_{\substack{k\geq 0,\\\al+k\beta\in\Phi}}c_{\al,k,\beta}e_{\al+k\beta},
\end{align*}
and 
\begin{align*}
\Ad(u_{\beta}(c))([x]_{g_2^{\delta}})&=\sum_{\substack{\al\in\Phi^+\\ \delta(\al)= 2}}\lambda_{\al}\sum_{\substack{k\geq 0,\\\al+k\beta\in\Phi}}c_{\al,k,\beta}e_{\al+k\beta},
\end{align*}
for some $c_{\al,k,\beta}\in\k$. As $\beta\in\Phi^+$ we have either $\delta(\beta)=0$ or $\delta(\beta)>0$.\\
In the first case $\delta(\al+k\beta)=\delta(\al)$ for all $k\in\Z_{\geq 0}$ and in the second case we have $\delta(\al+k\beta)=\delta(\al)+k\delta(\beta)>\delta(\al)$ for all $k\in\Z_{\geq 0}$.\\
So $[\Ad(u_{\beta}(c))(x)]_{g_2^{\delta}}=\Ad(u_{\beta}(c))([x]_{g_2^{\delta}})\in\fg_2^{\delta}$ if $\delta(\beta)=0$.\\
Suppose $\delta(\beta)\neq 0$. Then 
\begin{align*}
\Ad(u_{\beta}(c))(x)=&\sum_{\substack{\al\in\Phi^+ \\ \delta(\al)=2}}\big(\lambda_{\al}e_{\al}+\lambda_{\al}\sum_{\substack{k\geq 0,\\\al+k\beta\in\Phi}}c_{\al,k,\beta}e_{\al+k\beta}\big)\\
&+\sum_{\substack{\al\in\Phi^+ \\ \delta(\al)> 2}}\lambda_{\al}\sum_{\substack{k\geq 0,\\\al+k\beta\in\Phi}}c_{\al,k,\beta}e_{\al+k\beta},
\end{align*}
and so $[\Ad(u_{\beta}(c))(x)]_{g_2^{\delta}}=\sum_{\substack{\al\in\Phi^+ \\ \delta(\al)=2}}\lambda_{\al}e_{\al}=[x]_{g_2^{\delta}}.$\\
The claim follows inductively.
\end{proof}
\begin{prop}
We use the same notation as in Lemma \ref{nw.u.x}. Suppose that $[\Ad(n_wu)(x)]_{\fg_2^{\delta}}\notin\fg_2^{\delta!}$. Then
\begin{enumerate}[(i)]
\item  for all $u'\in U$ and $t\in T$ it follows that $[\Ad(u'tn_wu)(x)]_{\fg_2^{\delta}}\notin\fg_2^{\delta!}$ as well.
\item For $w_0\in W_0^{\delta}$ we have $[\Ad(n_{w_0w}u)(x)]_{\fg_2^{\delta}}\notin\fg_2^{\delta!}$.
\end{enumerate} 
As a direct consequence we have the following: Let $w$ be in the right  transversal of $W_0$ in $W$. Then it is enough to check whether $[\Ad(n_wu)(x)]_{\fg_2^{\delta}}$ is in $\fg_2^{\delta!}$ in order to decide whether $[\Ad(u'tn_{w_0w}u)(x)]_{\fg_2^{\delta}}$ is in $\fg_2^{\delta!}$ for all $u'\in U$, $t\in T$ and $w_0\in W_0^{\delta}$.
\end{prop}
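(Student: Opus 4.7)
The plan is to reduce each part of the proposition to one of the two preceding lemmas. Part (ii) is essentially the content of Lemma \ref{nw_u}: once we have $\Ad(n_wu)(x)\in\fg_{\geq 2}^{\delta}$ (implicit in the discussion of its $\fg_2^{\delta}$-part) together with $[\Ad(n_wu)(x)]_{\fg_2^{\delta}}\notin\fg_2^{\delta!}$, applying Lemma \ref{nw_u} with $w'=w_0$ delivers $[\Ad(n_{w_0w}u)(x)]_{\fg_2^{\delta}}\notin\fg_2^{\delta!}$ directly. (Lemma \ref{nw_u} is stated for $u\in U_w^-$, but its proof uses only that $n_{w_0}\in G_0^{\delta}$ preserves each $\fg_i^{\delta}$, so the argument carries over verbatim for $u\in U$.)

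For part (i), I would write $y:=\Ad(n_wu)(x)\in\fg_{\geq 2}^{\delta}$ and $y_2:=[y]_{\fg_2^{\delta}}\notin\fg_2^{\delta!}$, and handle $t$ and $u'$ in turn. Since $t\in T\subseteq G_0^{\delta}$ acts on each $\fg_\beta$ by the scalar $\beta(t)$, the decomposition $\fg=\bigoplus_i\fg_i^{\delta}$ is stabilised, so $\Ad(t)(y)\in\fg_{\geq 2}^{\delta}$ and $[\Ad(t)(y)]_{\fg_2^{\delta}}=\Ad(t)(y_2)$. Writing $u'=\prod_{\beta\in\Phi^+}u_\beta(c_\beta')\in U$ and recalling that $\delta(\beta)\geq 0$ for every $\beta\in\Phi^+$, I can apply Lemma \ref{nw.u.x} to $\Ad(t)(y)$ to obtain
\[
[\Ad(u'tn_wu)(x)]_{\fg_2^{\delta}}=\Ad(\tilde{u}')\bigl(\Ad(t)(y_2)\bigr)=\Ad(\tilde{u}'t)(y_2),
\]
where $\tilde{u}':=\prod_{\beta\in\Phi^+,\,\delta(\beta)=0}u_\beta(c_\beta')\in G_0^{\delta}$, hence $\tilde{u}'t\in G_0^{\delta}\subseteq G_{\geq 0}^{\delta}$.

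To finish, I would use a standard conjugation trick. Since $y_2\notin\fg_2^{\delta!}$, pick $g\in G_{y_2}$ with $g\notin G_{\geq 0}^{\delta}$, and put $g':=(\tilde{u}'t)\,g\,(\tilde{u}'t)^{-1}$. A direct computation gives
\[
\Ad(g')\bigl([\Ad(u'tn_wu)(x)]_{\fg_2^{\delta}}\bigr)=\Ad(\tilde{u}'t)\,\Ad(g)(y_2)=\Ad(\tilde{u}'t)(y_2)=[\Ad(u'tn_wu)(x)]_{\fg_2^{\delta}},
\]
and $g'\notin G_{\geq 0}^{\delta}$ because the latter is a subgroup containing $\tilde{u}'t$ but not $g$. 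This shows $[\Ad(u'tn_wu)(x)]_{\fg_2^{\delta}}\notin\fg_2^{\delta!}$, which is (i). The ``direct consequence'' stated in the proposition is then immediate from (i) and (ii) by writing any Weyl group element as a product of a weight-zero element and a representative of the right transversal.

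The main obstacle is really just bookkeeping: one must separate the factor $u'$ into the piece $\tilde{u}'$ that lives in $G_0^{\delta}$ (and thus genuinely acts on $\fg_2^{\delta}$) from the remaining factors that push contributions into strictly higher graded pieces and therefore disappear from $[\,\cdot\,]_{\fg_2^{\delta}}$. Lemma \ref{nw.u.x} does exactly this; once the identity $[\Ad(u't)(y)]_{\fg_2^{\delta}}=\Ad(\tilde{u}'t)(y_2)$ is in hand, the conjugation argument transferring the non-trivial stabiliser from $y_2$ is routine.
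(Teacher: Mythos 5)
Your proposal is correct and follows essentially the same route as the paper: part (i) by combining the $T$-action on $\fg_2^{\delta}$ with Lemma \ref{nw.u.x} to get $[\Ad(u'tn_wu)(x)]_{\fg_2^{\delta}}=\Ad(\tilde{u}'t)(y_2)$ and then conjugating a bad centralising element by $\tilde{u}'t\in G_{\geq 0}^{\delta}$, and part (ii) by the same conjugation trick with $n_{w_0}\in G_0^{\delta}$. Your observation that Lemma \ref{nw_u} is stated for $u\in U_w^-$ but its proof carries over to $u\in U$ is a small, legitimate extra care point; the paper effectively does the same by invoking ``as seen in the proof of Lemma \ref{nw_u}'' rather than the lemma itself.
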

\begin{proof} Let $y:=[\Ad(n_wu)(x)]_{\fg_2^{\delta}}$. 
\begin{enumerate}[(i)]
\item
As $t\in T$ we have $[\Ad(tn_wu)(x)]_{\fg_2^{\delta}}=\Ad(t)([\Ad(n_wu)(x)]_{\fg_2^{\delta}})=\Ad(t)(y)$. By the above lemma we can write $[\Ad(u'tn_wu)(x)]_{\fg_2^{\delta}}=\Ad(\tilde{u}t)(y)$.\\
As $y\notin\fg_2^{\delta!}$ there exists $g\in G\setminus G_{\geq 0}^{\delta}$ such that $\Ad(g)(y)=y$. Then $h:=\tilde{u}tg(\tilde{u}t)^{-1}\in G\setminus G_{\geq 0}^{\delta}$ and $\Ad(h)(\Ad(\tilde{u}t)(y))=\Ad(\tilde{u}t)(y)$ which proves (i).
\item Since $w_0\in W_0^{\delta}$ we have $n_{w_0}\in G_{0}^{\delta}$ and therefore $[\Ad(n_{w_0w}u)(x)]_{\fg_2^{\delta}}=\Ad(n_{w_0})(y)$ as seen in the proof of Lemma \ref{nw_u}. By the same argument as in (i) there exists $g\in G\setminus G_{\geq 0}^{\delta}$ such that $\Ad(g)(y)=y$ and therefore $\Ad(n_{w_0}gn_{w_0}^{-1})([\Ad(n_{w_0w}u)(x)]_{\fg_2^{\delta}})=$ $[\Ad(n_{w_0w}u)(x)]_{\fg_2^{\delta}}$ $=\Ad(n_{w_0})(y)$. As $n_{w_0}gn_{w_0}^{-1}\notin G_{\geq 0}^{\delta}$ it follows that $[\Ad(n_{w_0w}u)(x)]_{\fg_2^{\delta}}\notin\fg_2^{\delta!}$.
\end{enumerate}
\end{proof}
\subsection{Description of the algorithm to compute the pieces}
 To determine which nilpotent orbits make up a piece $\cN_{\fg}^{\btri}$ we fix a map $\delta\in\wtri\in\btri$ and check whether $\cO \subseteq \cN_{\fg}^{\btri}$ for each nilpotent orbit $\cO$ in $\fg$. We will use the results from the previous sections to simplify the computations and proceed in several steps. The computations were carried out using the computer programme Magma, \cite{Magma}.
 We fix the following notation: Let $x\in\fg$ denote a nilpotent orbit representative, where $x$ is chosen such that $x=\sum_{\al\in\Phi^+}\lambda_{\al}e_{\al}$ for suitable $\lambda_{\al}\in\k$. Define tuples
\begin{align*}
c&:=(c_{\al})_{\al\in\Phi^+},\\
c'&:=c_w':=(c_{\al}')\substack{\al\in\Phi^+\\ \inv{w}.\al\in\Phi^-} \quad\text{for }w\in W,\text{ and}\\
d&:=(d_{\al})_{\al\in\Pi},
\end{align*}
where the entries are elements in a function field over $\k$. Let $g_w(c,d,c'):=u'(c')t(d)n_wu(c)$ be the Bruhat decomposition of an element in $G$ where 
\begin{align*}
u'(c')&:=\prod_{\substack{\al\in\Phi^+\\ \inv{w}.\al\in\Phi^-}}u_{\al}(c_{\al}'),\\
t(d)&:=\prod_{\al\in\Pi}h_{\al}(d_{\al}),\text{ and}\\ u(c)&:=\prod_{\al\in\Phi^+}u_{\al}(c_{\al})
\end{align*}
with the $h_{\al}$ defined as in Section \ref{Ad-action} and for a fixed ordering on $\Phi^+$.
We write
 $$x_w(c,d,c'):=\Ad(u'(c')t(d)n_wu(c))(x)$$
 for the action of $g_w(c,d,c')$ on $x$. Finally, let $\lambda_{w,\beta}(c,d,c')\in\k$ for $\beta\in\Phi$ and $\mu_{w,i}(c,d,c')\in\k$ for $i\in\{1,\ldots,|\Pi|\}$ be defined as in \eqref{ad(gw)}:
\begin{align*}
\Ad(g_w(c,d,c'))(x)=\sum_{\beta\in\Phi}\lambda_{w,\beta}(c,d,c')e_{\beta}+\sum_{i=1}^ {|\Pi|}\mu_{w,i}(c,d,c')h_{i}.
\end{align*}
 \begin{enumerate}[itemsep=5pt,label=Step \arabic*: , ref={Step~\arabic*}]
\item \label{checkifing2dg2d} Let $\delta$ arise from a fixed weighted Dynkin diagram and let $x\in\fg$ be a nilpotent element such that $x=\sum_{\al\in\Phi^+}\lambda_{\al}e_{\al}$ for suitable $\lambda_{\al}\in\k$. The element $x$ is a representative of a nilpotent orbit, denoted by $\cO_x$.\\
Fix $w\in W$. For an arbitrary element $g_w(c,d,c')\in Bn_wB$, we want to check if we have $\Ad(g_w(c,d,c'))(x)=x_w(c,d,c')\in\fg_{\geq 2}^{\delta}$. In this manner, it is possible to compute the $\Ad$-action of every element in $G$ on $x$.\\
By Lemma \ref{checkg2d}, it is enough to check if $\Ad(n_w\tilde{u})(x)\in\fg_{\geq 2}^{\delta}$ where 
$$\tilde{u}=u_{\al_1}(\tilde{c}_{\al_1})\cdots u_{\al_r}(\tilde{c}_{\al_r})$$
denotes an element in $U$, parametrised by the elements $\tilde{c}_{\al_1},\ldots, \tilde{c}_{\al_r}\in\k$ for some $r\in\N$ and $\al_1,\ldots,\al_r\in\Phi^+$ such that $w(\al_i)\notin\Phi^+$ for all $i=1,\ldots,r$. From now on we will use the notation $c_{\al_i}$ instead of $\tilde{c}_{\al_i}$ for easier reading and let $x_w(c):=x_w(c_{\al_1},\ldots, c_{\al_r}):=\Ad(n_w\tilde{u})(x)$ be the element that depends on the $c_{\al_i}$.\\
In order to check if $\Ad(n_w\tilde{u})(x)\in\fg_{\geq 2}^{\delta}$, we first compute the action of $n_w\tilde{u}$ on $x$ as in Section \ref{Ad-action}. Following this, the resulting system of non-linear equations for the $c_{\al_i}$  (see \eqref{nonlineqs}) can be solved by computing a Gröbner basis of this system. For this we use the standard algorithm in Magma, see \cite[Section 112.4.3]{MagmaHandbook} with a reverse lexicographical ordering. In order to speed this process up, we will first check for variables that occur in linear equations and solve for those variables. We will apply this every time we compute a Gröbner basis. Note that in general the solution (if there exists one) will still depend on some of the $c_{\al_i}$. This means that we will have to check whether $[\Ad(n_w\tilde{u})(x)]_{\fg_{2}^{\delta}}\in\fg_{2}^{\delta!}$ for each solution.
\item Suppose there exists a solution such that $\Ad(n_w\tilde{u})(x)\in\fg_{\geq 2}^{\delta}$. As mentioned in \ref{checkifing2dg2d} this solution will still depend on some of the $c_{\al_i}$. Let $y_w:=x_w(\varepsilon_1,\ldots, \varepsilon_r),$ $\varepsilon_i\in\{0,1,\ldots,\chara(\k)-1\}$ be the element arising from a particular solution where we replaced the $c_{\al_i}$ in $\tilde{u}=u_{\al_1}(c_{\al_1})\cdots u_{\al_r}(c_{\al_r})$ by the  solution  in which we set the remaining $c_{\al_i}$ to zero if possible or otherwise another fixed element in the prime field of $\k$. This depends on which solution is possible, so that we do not divide by zero when replacing the $c_{\al_i}$.
\item \label{cut2} Let $x'_w(c):=[x_w(c)]_{\fg_2^{\delta}}$ and $y'_w:=[y_w]_{\fg_2^{\delta}}$ by setting the coefficients of the basis elements not contained in $\fg_2^{\delta}$ to zero.
\item\label{step4} Check if there is an element $g_{y'_w}\in G_{\geq 0}^{\delta}$ such that we have $x'_w(c)=\Ad(g_{y'_w})(y'_w)$. As in \ref{checkifing2dg2d} we will use Section \ref{Ad-action} to compute $\Ad(g_{y'_w})(y'_w)$ and determine a Gröbner basis to solve the resulting system. Note that the solution $g_{y'_w}$ will depend on the variables $c_{\al_i}$ that $x'_w(c)$ depends on.\label{checkconjugate}
\begin{enumerate}[leftmargin=*,label=Case \arabic*:]
\item If such an element $g_{y'_w}\in G_{\geq 0}^{\delta}$ with $x'_w(c)=\Ad(g_{y'_w})(y'_w)$ exists, we can focus our computations on $x'_w(c)$:
\begin{enumerate}[leftmargin=*,label=Case 1\alph*:]
\item If there is no $g'\in G\setminus G_{\geq 0}^{\delta}$ such that $\Ad(g')(y'_w)=y'_w,$ then $y'_w\in\fg_2^{\delta!}$ and so both $y_w\in\cN_{\fg}^{\btri}$ and $x\in\cN_{\fg}^{\btri}$.
\item There is $g'\in G\setminus G_{\geq 0}^{\delta}$ such that $\Ad(g')(y'_w)=y'_w$. Then $y'_w\notin \fg_2^{\delta!}$. As $x'_w(c)=\Ad(g_{y'_w})(y'_w)$ we have
\begin{align*}
\Ad(g'\inv{g_{y'_w}})(x'_w(c))&=\Ad(g')(y'_w)=y'_w
\end{align*}
and it follows that $\Ad(g_{y'_w}g'\inv{g_{y'_w}})(x'_w(c))=x'_w(c)$.\\
As before $g_{y'_w}g'\inv{g_{y'_w}}\notin G_{\geq 0}^{\delta},$ so $x'_w(c)\notin\fg_2^{\delta!}$.
\end{enumerate}
\item If there is no $g_{y'_w}\in G_{\geq 0}^{\delta}$ such that $x'_w(c)=\Ad(g_{y'_w})(y'_w)$, we first check whether Case 1b holds for $y'_w$. If it does, we check the same thing for $x'_w(c),$ i.e.\ if there is an element $g'\in G\setminus G_{\geq 0}^{\delta}$ such that $\Ad(g')(x'_w(c))=x'_w(c)$. \\
If Case 1b doesn't hold for $y'_w$, we will proceed as in \ref{checkcentralizer}. Note that $g'$ may depend on the variables in $x'_w(c),$ so we need to make sure that this solution holds for all possible values of the $c_{\al_i}$.\\
Sometimes we might find such a $g_{y'_w}\in G_{\geq 0}^{\delta}$ only for certain values of the $c_{\al_i}$. If this is the case, we will have to check everything in \ref{checkconjugate} for the values of the $\bar{c}_{\al_i}:=c_{\al_i}$ for which no $g_{y'_w}\in G_{\geq 0}^{\delta}$ as above exists. For these cases we need to check separately whether $x''_w(c)\in\fg_2^{\delta!}$ for the arising elements $x''_w(c):=[x(\bar{c}_{\al_1},\ldots,\bar{c}_{\al_r})]_{\fg_2^{\delta}}$ depending on the $\bar{c}_{\al_i}$.
\end{enumerate}
\item \label{checkcentralizer} For each $w''\in W$ first check whether $w''$ has weight $0,$ i.e.\ $w''\in W_{0}^{\delta}$. If this is not the case, let $g_{w''}\in Bn_{w''}B$ and note that $g_{w''}\notin G_{\geq 0}^{\delta}$ by Lemma \ref{checkg2d}. We want to check whether $g_{w''}$ centralises $y'_w,$ so as before we compute $\Ad(g_{w''})(y'_w)$ and solve the system $y'_w=\Ad(g_{w''})(y'_w)$ by using Gröbner bases. If this system has no solution, there is a possibility that $y'_w\in\fg_2^{\delta!}$ and we repeat \ref{checkcentralizer} for the next element ${w'''}\in W$.\\
If there is a solution, we take the elements $x''_w(c)$ from \ref{checkconjugate} for which we found no $g\in G_{\geq 0}^{\delta}$ such that $\Ad(g)(y'_w)=x''_w(c)$. We will check whether there is a solution for $x''_w(c)=\Ad(g_{w''})(x''_w(c))$ in each case. If we find a solution, it immediately follows that $x''_w(c)\notin \fg_2^{\delta!}$ and we move on to the next $w'''\in W$ in \ref{checkifing2dg2d}. If we have checked each $w'''\in W,$ it follows that this particular orbit is not contained in the piece.\\
If there exists $x''_w(c)$ as in \ref{step4}, Case 2, that is there is no element $g\in G_{\geq 0}^{\delta}$ with $x''_w(c)=\Ad(g)(y'_w),$ we need to check if there is a solution to $x''_w(c)=\Ad(g_{w'''})(x''_w(c))$ for each ${w'''}\in W$ for which we have already seen that $y'_w\neq\Ad(g_{w''})(y'_w)$. If so, then $x''_w(c)\notin\fg_2^{\delta!}$ and we move to the next $w'''\in W$ in \ref{checkifing2dg2d}. Otherwise we continue \ref{checkcentralizer} by replacing $y'_w$ with $x''_w(c)$.\\
If we do not find a solution for any element in $ W$ in \ref{checkcentralizer}, we have successfully proved that the centraliser of $x'_w(c)$ is contained in $G_{\geq 0}^{\delta}$ and therefore the orbit $\cO_x$ is contained in the piece. We can move on to check the next orbit. As we already know that certain orbits are contained in certain pieces, see Lemma \ref{diagcase}, we conjecture that in most cases $\cO_x$ will not be contained in $\cN_{\fg}^{\delta},$ see also Conjecture \ref{conjectureA}. This means we can run (and try to optimise) this algorithm keeping in mind that the most likely outcome is that there is an element $g\notin G_{\geq 0}^{\delta}$ centralising $[\Ad(n_wu')(x)]_{\fg_2^{\delta}}$.
\end{enumerate}

We can simplify the computations a bit:\\
If $n_{\tilde{w}}\in G_0^{\delta}$ for a fixed $\tilde{w}\in W$, check if $\Ad(n_{\tilde{w}})(z')=z'$ where $z'$ is as in \ref{cut2} above. If this is the case and we know that there exists a $g\in G\setminus G_{\geq 0}^{\delta}$ such that 
$\Ad(g)(z')\neq z'$ then for
$$h\in\{g\inv{n_{\tilde{w}}},gn_{\tilde{w}},n_{\tilde{w}}g,\inv{n_{\tilde{w}}}g,n_{\tilde{w}}g\inv{n_{\tilde{w}}},n_{\tilde{w}}gn_{\tilde{w}},\inv{n_{\tilde{w}}}g\inv{n_{\tilde{w}}},\inv{n_{\tilde{w}}}gn_{\tilde{w}}\}$$
it follows that $\Ad(h)(z')\neq z'$, as both $\Ad(n_{\tilde{w}})(z')=z'$ and $\Ad(n_{\tilde{w}}^{-1})(z')=z'$, as well as $\Ad(g)(z')\neq z'$.\\
Similarly, if $\tilde{u}\in U:=\prod_{\alpha\in\Phi^+}U_{\alpha}$ with $\Ad(\tilde{u})(z')=z'$ it is enough to check whether $\Ad(\tilde{u}g)(z')=z'$ or $\Ad(g\tilde{u})(z')=z'$. This means we may choose $\tilde{u}$ in the centralizer of $z'$ in such a way that $g$ depends on less variables $t_{\alpha}$: If $g=u'tn_wu$ as in Definition \ref{bruhatdecomp} where $u'=u_{\al_1}(c'_{\al_1})\cdots u_{\al_r}(c'_{\al_r})$ and $u=u_{\al_1}(c_{\al_1})\cdots u_{\al_r}(c_{\al_r})$ for $c_{\al_i},\,c'_{\al_i}\in\k$ then choose $\tilde{u}_1,\,\tilde{u}_2\in U$ such that $v':=\tilde{u}_1u'$ and $v:=\tilde{u}_2u$ depend on less variables $c_{\al_i},\,c'_{\al_i}\in\k$. It will now be easier to compute $\Ad(\tilde{u}_1gw\tilde{u}_2)(z')=z'$.
We summarise this section with the following pseudocode:
\section*{A pseudocode to compute the nilpotent pieces}
We summarise this section with the following pseudocode. The code is rather brief and only sketches the most important steps in the algorithm. It checks whether for a given weighted Dynkin diagram $\delta$ a nilpotent orbit with representative $x$ lies in the corresponding nilpotent piece.

\begin{algorithm}[h]
\caption{Computation of a nilpotent piece described by $\delta$}
\begin{algorithmic}[1]
\Procedure {Pieces}{$x,$ $S,$ $W,$ $\delta$}

INPUT:

$x$ - nilpotent orbit representative

$S$ - type of root system

$W$ - Weyl group

$\delta$ - weighted Dynkin diagram

OUTPUT: result whether $x$ is in $\cN_{\fg}^{\delta}$
\State find all elements $u\in U=\prod_{\al\in\Phi^+}U_{\al}$ s.t. $\Ad(u)(x)=x$ \textcolor{gray}{\textit{\small // we get $U$ from the root system}}
\State save the indices $i$ of $u_{\al_i}(c_i)$ in a list $Lu$ where $u_{\al_i}(c_i)$ is a factor in $u$ such that $c_i$ can be chosen arbitrarily and $\Ad(u_{\al_i}(c_i))(x)=x$
	
\State save all $w\in W$ with weight $0$ in a list $L$
\State compute a right transversal $T$ of all the elements of weight $0$ in $W$
\State $L2:=[~]$ \textcolor{gray}{\textit{\small //initiate list of elements in $W$ }}

\ForAll {$w\in T,$ $w\notin L2$}
	\State $y:=\Ad(n_wu)(x),$ $u$ not containing any factors $u_{\al_i}$, $i\in Lu$
	\If {$y\in \fg_{\geq 2}^{\delta}$}
	    \algstore{bkbreak}
		\end{algorithmic}
		\end{algorithm}
		\begin{algorithm}[h]
		\begin{algorithmic}[1]
		\algrestore{bkbreak}
		\State $z\leftarrow$ replace all variables $c_i$ in $y$ by 0 or 1 \textcolor{gray}{\textit{\small // in practice these two values are usually enough. We first check if this works for $0$, otherwise we use $1$. If both values do not work, we have to choose another value in the prime field or not replace the variable.}}
		\State $y'_w:=[y]_{\fg_2^{\delta}}$
		\State $z':=[z]_{\fg_2^{\delta}}$
		\State check if there exists $g\in G_{\geq 0}^{\delta}$ such that $\Ad(g)(z')=y'_w$
		
		\If {there exist values of the $c_{i}$ such that there exists no such $g$}
			\State $z_1\leftarrow $ values  of the $c_{i}$ such that there exists no such $g$
		\EndIf
		\ForAll {$w'\in W$}
			\If {$w'\notin L$}
				\If {$\Ad(g_{w'})(z')\neq z'$}
					\State go to the next $w'$
				\Else 
					\State check the same for all $z_1$
					 \If {$\Ad(g_{w'})(z_1)\neq z_1$}
						\State go to the next $w'$
					\Else 
						\State go to the next $w$ and save all $g.w$ for $g\in G_{\geq 0}$ in $L2$
					\EndIf
				\EndIf
			\EndIf
		\EndFor
	\Else
		\State go to the next $w$ and save all $\Ad(g)(w)$ for $g\in G_{\geq 0}$ in $L2$
	\EndIf
\EndFor
\If {for all $w\in T$ there exists $w'\in W\setminus L$ with $\Ad(g_{w'})(z')=z'$}
	 \State \Return ``$x$ not in piece''
\Else
	\State \Return ``$x$ in piece''
\EndIf
\EndProcedure
\end{algorithmic}
\end{algorithm}
\newpage
\section{Results}\label{results}
We fix the following notation for this section: Let $\Pi=\{\al_1,\ldots,\al_r\}$ be the set of simple roots, for some $r\in\N$. Denote the elements $e_{\al}$ spanning $\fg_{\al}$, $\al\in\Phi$, by $e_{1^{i_1},2^{i_2},\ldots,r^{i_r}}:=e_{i_1\al_1+i_2\al_2+\cdots+i_r\al_r}$, $i_j\in\Z$ and $i_1\al_1+i_2\al_2+\cdots+i_r\al_r\in\Phi$. If $i_j=0$ we will simply leave $j^{i_j}$ out.
\begin{rem}\label{rem_CP-pieces}
Note that by \cite[Theorem 7.3]{ClarkePremet} the CP-pieces agree with the nilpotent pieces if the nilpotent pieces form a partition of the nilpotent variety $\cN_{\fg}$.
\end{rem}
\subsection{\texorpdfstring{$G_2$}{}}
We have the following weighted Dynkin diagrams of type $G_2$ (\cite[13.1, p.401]{Carter85}).
\begin{center}
\begin{figure}[h]
\begin{tikzpicture}[scale=.5]
	\draw (-1,0) node {(1)};
    \draw[thick] (0 cm,0) circle (.2cm);
    \draw[thick] (1.5 cm,0) circle (.2cm);
    \draw (0 cm,-0.75) node  {\scriptsize{0}};
    \draw (1.5 cm,-0.75) node  {\scriptsize{0}};
    
    \draw[thick] (0.2 cm,0) -- +(1.1 cm,0);
    \draw[thick] (0 cm,0.2) -- +(1.5 cm,0);
    \draw[thick] (0 cm,-0.2) -- +(1.5 cm,0);
    \draw[thick] (0.85cm,0.2cm) --(0.5 cm,0)--(0.85cm, -0.2cm);
    
	\draw (4,0) node {(2)};    
    \draw[thick] (5 cm,0) circle (.2cm);
    \draw[thick] (6.5 cm,0) circle (.2cm);
    \draw (5 cm,-0.75) node  {\scriptsize{0}};
    \draw (6.5 cm,-0.75) node  {\scriptsize{1}};
    
    \draw[thick] (5.2 cm,0) -- +(1.1 cm,0);
    \draw[thick] (5 cm,0.2) -- +(1.5 cm,0);
    \draw[thick] (5 cm,-0.2) -- +(1.5 cm,0);
    \draw[thick] (5.85cm,0.2cm) --(5.5 cm,0)--(5.85cm, -0.2cm);
    
	\draw (9,0) node {(3)};    
    \draw[thick] (10 cm,0) circle (.2cm);
    \draw[thick] (11.5 cm,0) circle (.2cm);
    \draw (10 cm,-0.75) node  {\scriptsize{1}};
    \draw (11.5 cm,-0.75) node  {\scriptsize{0}};
    
    \draw[thick] (10.2 cm,0) -- +(1.1 cm,0);
    \draw[thick] (10 cm,0.2) -- +(1.5 cm,0);
    \draw[thick] (10 cm,-0.2) -- +(1.5 cm,0);
    \draw[thick] (10.85cm,0.2cm) --(10.5 cm,0)--(10.85cm, -0.2cm);
    
	\draw (-1,-2) node {(4)};    
    \draw[thick] (0 cm,-2) circle (.2cm);
    \draw[thick] (1.5 cm,-2) circle (.2cm);
    \draw (0 cm,-2.75) node  {\scriptsize{0}};
    \draw (1.5 cm,-2.75) node  {\scriptsize{2}};
    
    \draw[thick] (0.2 cm,-2) -- +(1.1 cm,0);
    \draw[thick] (0 cm,-1.8) -- +(1.5 cm,0);
    \draw[thick] (0 cm,-2.2) -- +(1.5 cm,0);
    \draw[thick] (0.85cm,-1.8cm) --(0.5 cm,-2)--(0.85cm, -2.2cm);
    
	\draw (4,-2) node {(5)};    
    \draw[thick] (5 cm,-2) circle (.2cm);
    \draw[thick] (6.5 cm,-2) circle (.2cm);
    \draw (5 cm,-2.75) node  {\scriptsize{2}};
    \draw (6.5 cm,-2.75) node  {\scriptsize{2}};
    
    \draw[thick] (5.2 cm,-2) -- +(1.1 cm,0);
    \draw[thick] (5 cm,-1.8) -- +(1.5 cm,0);
    \draw[thick] (5 cm,-2.2) -- +(1.5 cm,0);
    \draw[thick] (5.85cm,-1.8cm) --(5.5 cm,-2)--(5.85cm, -2.2cm);
\end{tikzpicture}
\small
\caption{The weighted Dynkin diagrams of type $G_2$}\label{wdg2}
 \normalsize
\end{figure}
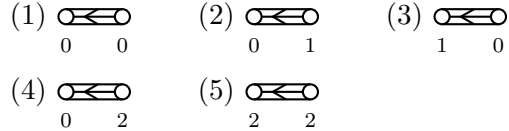
\end{center}
Furthermore, let the root system be given by 
$$\Phi:=\{\pm\alpha,\pm\beta,\pm(\alpha+\beta),\pm(2\alpha+\beta),\pm(3\alpha+\beta),\pm(3\alpha+2\beta)\}.$$
We begin with the case where char$(\k)=2$. By \cite{Stuhler}, we get the following orbit representatives, where we let $\fg_{\gamma}=\langle e_{\gamma}\rangle$ for $\gamma\in\Phi$.
\begin{multicols}{2}
\begin{enumerate}
\item $x_1:=0$,
\item $x_2:=e_{\alpha}+e_{\beta}$,
\item $x_3:=e_{\alpha}+e_{2\alpha+\beta}$,
\item $x_4:=e_{\alpha}$,
\item $x_5:=e_{\beta}$.
\end{enumerate}
\end{multicols}
Let $O_i$ be the orbit corresponding to the representative $x_i$, $i\in\{1,\ldots,5\}$ and $\cN_i$ be the piece corresponding to the weighted Dynkin diagram ($i$), $i\in\{1,\ldots,5\}$ as in figure \ref{wdg2}.
Then we get the following pieces:
\begin{multicols}{2}
\begin{enumerate}
\item $\cN_1:=\{0\}=O_1$,
\item $\cN_2:=O_5$,
\item $\cN_3:=O_4$,
\item $\cN_4:=O_3$,
\item $\cN_5:=O_2$.
\end{enumerate}
\end{multicols}
If char$(\k)=3$, the following orbit representatives are again given by \cite{Stuhler} using the same notation as above:
\begin{multicols}{2}
\begin{enumerate}
\item $x_1:=0$,
\item $x_2:=e_{\alpha}+e_{\beta}$,
\item $x_3:=e_{\beta}+e_{\alpha+\beta}$,
\item $x_4:=e_{\beta}+e_{2\alpha+\beta}$,
\item $x_5:=e_{\alpha}$,
\item $x_6:=e_{\beta}$.
\end{enumerate}
\end{multicols}
Then we get the following pieces:
\begin{multicols}{2}
\begin{enumerate}
\item $\cN_1:=\{0\}=O_1$,
\item $\cN_2:=O_6$,
\item $\cN_3:=O_3\cup O_5$,
\item $\cN_4:=O_4$,
\item $\cN_5:=O_2$.
\end{enumerate}
\end{multicols}
Note that in good characteristic, the representatives $x_3$ and $x_5$ are conjugate. We summarise the above results as follows:
\begin{thm}[Nilpotent pieces for $G_2$ in characteristic $2$ and $3$]
We use the same notation as above and let
\begin{multicols}{2}
\begin{enumerate}
\item $x_1:=0$,
\item $x_2:=e_{\alpha}+e_{\beta}$,
\item $x_3:=e_{\beta}+e_{\alpha+\beta}$,
\item $x_4:=e_{\beta}+e_{2\alpha+\beta}$,
\item $x_5:=e_{\alpha}$,
\item $x_6:=e_{\beta}$.
\end{enumerate}
\end{multicols} Then the pieces for $G_2$ with respect to the weighted Dynkin diagrams are given by 
\begin{multicols}{2}
\begin{enumerate}
\item $\cN_1:=\{0\}=O_{x_1}$,
\item $\cN_2:=O_{x_6}$,
\item $\cN_3:=O_{x_5}$,
\item $\cN_4:=O_{x_4}$,
\item $\cN_5:=O_{x_2}$
\end{enumerate}
\end{multicols}
if $\chara(k)=2$ and by
\begin{multicols}{2}
\begin{enumerate}
\item $\cN_1:=\{0\}=O_{x_1}$,
\item $\cN_2:=O_{x_6}$,
\item $\cN_3:=O_{x_3}\cup O_{x_5}$,
\item $\cN_4:=O_{x_4}$,
\item $\cN_5:=O_{x_2}$.
\end{enumerate}
\end{multicols}
if $\chara(k)=3$. In particular, the nilpotent pieces form a partition of the nilpotent variety and therefore agree with the CP-pieces by Remark \ref{rem_CP-pieces}.
\end{thm}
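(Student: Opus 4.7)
The plan is to verify, for each weighted Dynkin diagram $\delta$ in Figure \ref{wdg2} and each nilpotent orbit representative $x_i$, whether $x_i$ lies in $\cN_{\fg}^{\btri_{\delta}}$, and then to check that the resulting unions of orbits form a partition of $\cN_{\fg}$. The representatives $x_i$ themselves are taken from Stuhler's classification (already verified to satisfy conditions (1) and (2) of Theorem \ref{main_theorem} after comparison with the representatives in good characteristic).

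First, I would dispatch the easy cases. The regular piece $\cN_5$ is handled directly by Proposition \ref{regpiece}: its only element (up to conjugacy) is $x_2=e_{\alpha}+e_{\beta}$, giving $\cN_5=\cO_{x_2}$ in both characteristics. The trivial diagram yields $\cN_1=\{0\}=\cO_{x_1}$. Next, I would identify the non-exceptional classes in $G_2$ (for which the $T$-labelling in characteristic $p$ agrees with the weighted Dynkin diagram in good characteristic) and apply Lemma \ref{diagcase} to place each of them in the piece indexed by its own diagram: this will cover $\cO_{x_6}\subseteq\cN_2$ (long root), $\cO_{x_5}\subseteq\cN_3$ (short root), and $\cO_{x_4}\subseteq\cN_4$ (subregular), and in characteristic $3$ it additionally places $\cO_{x_5}\subseteq\cN_3$, while leaving $\cO_{x_3}$ (the exceptional class in characteristic $3$) to be treated separately.

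The remaining work is to show that all off-diagonal inclusions fail, i.e.\ that for each pair $(x_i,\delta)$ not accounted for above we have $x_i\notin\cN_{\fg}^{\btri_\delta}$, and to show in characteristic $3$ that the exceptional $x_3$ does belong to $\cN_3$. For the negative direction one runs the algorithm of Section \ref{computation}: for each right transversal representative $w$ of $W_0^{\delta}$ in $W$ and each $u\in U_w^-$, compute $\Ad(n_w u)(x_i)$, test via Gröbner bases whether its non-$\fg_{\geq 2}^{\delta}$ coefficients can be made to vanish, and on each such solution exhibit a weight-nonzero Weyl representative centralizing $[\Ad(n_wu)(x_i)]_{\fg_2^{\delta}}$; Lemmas \ref{checkg2d} and \ref{nw_u} together with the propositions in Section \ref{computation} then suffice to conclude $[\Ad(g)(x_i)]_{\fg_2^{\delta}}\notin\fg_2^{\delta!}$ for every $g\in G$. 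For the positive direction on $(x_3,\delta_3)$ in characteristic $3$, one must exhibit $g\in G$ with $\Ad(g)(x_3)\in\sigma^{\delta_3}$; concretely, solve the system to land in $\fg_{\geq 2}^{\delta_3}$ and verify that the centralizer of the $\fg_2^{\delta_3}$-part is contained in $G_{\geq 0}^{\delta_3}$, checking the weight-nonzero Weyl representatives as in \ref{checkcentralizer}.

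Finally, one assembles the results and notes that every nilpotent orbit appears in exactly one piece, establishing the partition; the correspondence with orbits in good characteristic follows from the construction, since in characteristic $3$ the orbits $\cO_{x_3}$ and $\cO_{x_5}$ fuse to a single orbit in good characteristic (precisely the subregular orbit with diagram $(3)$), matching the assertion $\cN_3=\cO_{x_3}\cup\cO_{x_5}$. The partition statement then implies equality with the CP-pieces via Remark \ref{rem_CP-pieces}. The main obstacle is the Gröbner basis step in Case $1b$/\ref{checkcentralizer}: for off-diagonal $(x_i,\delta)$ pairs the solution family typically still depends on free parameters, so one must argue uniformly in those parameters that a non-parabolic centralizing element exists; in $G_2$ the Weyl group has only twelve elements, so the case analysis is short, but care is needed when a free parameter takes an exceptional value (for instance $0$ in characteristic $2$) and one must restart the centralizer search for that specialization, as described in Case $2$ of \ref{checkconjugate}.
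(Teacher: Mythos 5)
Your proposal matches the paper's strategy: dispatch the regular piece via Proposition \ref{regpiece} and the trivial diagram directly, use Lemma \ref{diagcase} for the non-exceptional diagonal cases, and run the Gröbner-basis algorithm of Section \ref{computation} (together with Lemmas \ref{checkg2d} and \ref{nw_u}) to rule out all off-diagonal inclusions and to verify that the exceptional class $\cO_{x_3}$ in characteristic $3$ lies in $\cN_3$. The paper itself treats the theorem as a summary of these computational results rather than giving a further written proof, so your outline reflects the intended argument accurately.
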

\subsection{\texorpdfstring{$F_4$}{}}
We proceed as above, by listing the weighted Dynkin diagrams for $F_4$ as well as the orbit representatives in good and bad characteristic. Note that the orbit representatives in good characteristic are the same as in characteristic $3$. For the orbit representatives see \cite[Table 22.1.4]{LiebeckSeitz} and \cite{Spaltenstein}. The simple roots are as given in the Dynkin diagram:\\
\begin{center}
\begin{tikzpicture}[scale=.5]
    \draw[thick] (1.5 cm,10) circle (.2cm);
    \draw[thick] (3 cm,10) circle (.2cm);
    \draw[thick] (4.5 cm,10) circle (.2cm);
    \draw[thick] (6 cm,10) circle (.2cm);
    \draw (1.5 cm,9.25) node  {\scriptsize{$\al_1$}};
    \draw (3 cm,9.25) node  {\scriptsize{$\al_2$}};
    \draw (4.5 cm,9.25) node  {\scriptsize{$\al_3$}};
    \draw (6 cm,9.25) node  {\scriptsize{$\al_4$}};
    
    \draw[thick] (1.7 cm,10) -- +(1.1 cm,0);
    \draw[thick] (3.2 cm,10.1) -- +(1.1 cm,0);
    \draw[thick] (3.2 cm,9.9) -- +(1.1 cm,0);
    \draw[thick] (3.65cm, 10.2 cm) --(3.9 cm,10)--(3.65cm, 9.8cm);
    \draw[thick] (4.7 cm,10) -- +(1.1 cm,0);
    \draw (6.5,9.9) node {.};
   \end{tikzpicture}\\
   
\end{center}
The nilpotent orbit representatives are as in the following table. As they are the same in good characteristic and for characteristic 3, these cases are not distinguished. In characteristic two we get additional orbits, which are denoted by $x_{i,2}$ if they are in the same orbit as $x_i$ in good characteristic. All orbit representatives in good characteristic (or characteristic $3$) are also orbit representatives in characteristic $2$.
\renewcommand{\arraystretch}{2}
\begin{longtable}{|r|c|c|r@{:= }>{\RaggedRight}p{3cm}|}
\hline
&  \multirow{2}*{Label} &  \multirow{2}*{Weighted Dynkin Diagram} & \multicolumn{2}{|c|}{Nilpotent orbit}\\[-0.4cm]
& & &\multicolumn{2}{|c|}{representative}\\
\hline
(i) & $1$& 
\begin{tikzpicture}[scale=.4,baseline=(y)]
    \draw[thick] (1.5 cm,0) circle (.2cm);
    \draw[thick] (3 cm,0) circle (.2cm);
    \draw[thick] (4.5 cm,0) circle (.2cm);
    \draw[thick] (6 cm,0) circle (.2cm);
    \draw (1.5 cm,-0.75) node  {\scriptsize{0}};
    \draw (3 cm,-0.75) node  {\scriptsize{0}};
    \draw (4.5 cm,-0.75) node  {\scriptsize{0}};
    \draw (6 cm,-0.75) node  {\scriptsize{0}};
    
    \draw[thick] (1.7 cm,0) -- +(1.1 cm,0);
    \draw[thick] (3.2 cm,0.1) -- +(1.1 cm,0);
    \draw[thick] (3.2 cm,-0.1) -- +(1.1 cm,0);
    \draw[thick] (3.65cm, 0.2 cm) --(3.9 cm,0)--(3.65cm, -0.2cm);
    \draw[thick] (4.7 cm,0) -- +(1.1 cm,0);
    \draw (2,-0.5) node (y)  {};
    \end{tikzpicture}
      & $x_1$ & $0$ \\
\hline
(ii) & $A_1$ &
\begin{tikzpicture}[scale=.4,baseline=(y)]
    \draw[thick] (1.5 cm,0) circle (.2cm);
    \draw[thick] (3 cm,0) circle (.2cm);
    \draw[thick] (4.5 cm,0) circle (.2cm);
    \draw[thick] (6 cm,0) circle (.2cm);
    \draw (1.5 cm,-0.75) node  {\scriptsize{1}};
    \draw (3 cm,-0.75) node  {\scriptsize{0}};
    \draw (4.5 cm,-0.75) node  {\scriptsize{0}};
    \draw (6 cm,-0.75) node  {\scriptsize{0}};
    
    \draw[thick] (1.7 cm,0) -- +(1.1 cm,0);
    \draw[thick] (3.2 cm,0.1) -- +(1.1 cm,0);
    \draw[thick] (3.2 cm,-0.1) -- +(1.1 cm,0);
    \draw[thick] (3.65cm, 0.2 cm) --(3.9 cm,0)--(3.65cm, -0.2cm);
    \draw[thick] (4.7 cm,0) -- +(1.1 cm,0);
    \draw (2,-0.5) node (y)  {};
    \end{tikzpicture}
      & $x_2$ & $e_{1^2,2^3,3^4,4^2}$ \\
\hline
 \multirow{2}*{(iii)} & $\tilde{A}_1$ &
 \multirow{2}*{
 \begin{tikzpicture}[scale=.4,baseline=(y)]
    \draw[thick] (1.5 cm,0) circle (.2cm);
    \draw[thick] (3 cm,0) circle (.2cm);
    \draw[thick] (4.5 cm,0) circle (.2cm);
    \draw[thick] (6 cm,0) circle (.2cm);
    \draw (1.5 cm,-0.75) node  {\scriptsize{0}};
    \draw (3 cm,-0.75) node  {\scriptsize{0}};
    \draw (4.5 cm,-0.75) node  {\scriptsize{0}};
    \draw (6 cm,-0.75) node  {\scriptsize{1}};
    \draw[thick] (1.7 cm,0) -- +(1.1 cm,0);
    \draw[thick] (3.2 cm,0.1) -- +(1.1 cm,0);
    \draw[thick] (3.2 cm,-0.1) -- +(1.1 cm,0);
    \draw[thick] (3.65cm, 0.2 cm) --(3.9 cm,0)--(3.65cm, -0.2cm);
    \draw[thick] (4.7 cm,0) -- +(1.1 cm,0);
    \draw (2,-0.5) node (y)  {};
    \end{tikzpicture}
    }
     & $x_{3}$ & $e_{1,2^2,3^3,4^2}$ \\
      & $(\tilde{A}_1)_2$ & & $x_{3,2}$ & $e_{1,2^2,3^3,4^2}+e_{1^2,2^3,3^4,4^2}$, $p=2$\\
      
\hline
(iv) & $A_1\tilde{A}_1$ &
\begin{tikzpicture}[scale=.4,baseline=(y)]
    \draw[thick] (1.5 cm,0) circle (.2cm);
    \draw[thick] (3 cm,0) circle (.2cm);
    \draw[thick] (4.5 cm,0) circle (.2cm);
    \draw[thick] (6 cm,0) circle (.2cm);
    \draw (1.5 cm,-0.75) node  {\scriptsize{0}};
    \draw (3 cm,-0.75) node  {\scriptsize{1}};
    \draw (4.5 cm,-0.75) node  {\scriptsize{0}};
    \draw (6 cm,-0.75) node  {\scriptsize{0}};
    
    \draw[thick] (1.7 cm,0) -- +(1.1 cm,0);
    \draw[thick] (3.2 cm,0.1) -- +(1.1 cm,0);
    \draw[thick] (3.2 cm,-0.1) -- +(1.1 cm,0);
    \draw[thick] (3.65cm, 0.2 cm) --(3.9 cm,0)--(3.65cm, -0.2cm);
    \draw[thick] (4.7 cm,0) -- +(1.1 cm,0);
    \draw (2,-0.5) node (y)  {};
    \end{tikzpicture}
      & $x_4$ & $e_{1,2^2,3^3,4}+e_{1,2^2,3^2,4^2}$ \\
\hline
(v) & $A_2$ &
\begin{tikzpicture}[scale=.4,baseline=(y)]
    \draw[thick] (1.5 cm,0) circle (.2cm);
    \draw[thick] (3 cm,0) circle (.2cm);
    \draw[thick] (4.5 cm,0) circle (.2cm);
    \draw[thick] (6 cm,0) circle (.2cm);
    \draw (1.5 cm,-0.75) node  {\scriptsize{2}};
    \draw (3 cm,-0.75) node  {\scriptsize{0}};
    \draw (4.5 cm,-0.75) node  {\scriptsize{0}};
    \draw (6 cm,-0.75) node  {\scriptsize{0}};
    
    \draw[thick] (1.7 cm,0) -- +(1.1 cm,0);
    \draw[thick] (3.2 cm,0.1) -- +(1.1 cm,0);
    \draw[thick] (3.2 cm,-0.1) -- +(1.1 cm,0);
    \draw[thick] (3.65cm, 0.2 cm) --(3.9 cm,0)--(3.65cm, -0.2cm);
    \draw[thick] (4.7 cm,0) -- +(1.1 cm,0);
    \draw (2,-0.5) node (y)  {};
    \end{tikzpicture}
      & $x_5$ & $e_{1,2^2,3^2}+e_{1,2,3^2,4^2}$ \\
\hline
 \multirow{2}*{(vi)} & $\tilde{A}_2$ & \multirow{2}*{
\begin{tikzpicture}[scale=.4,baseline=(y)]
    \draw[thick] (1.5 cm,0) circle (.2cm);
    \draw[thick] (3 cm,0) circle (.2cm);
    \draw[thick] (4.5 cm,0) circle (.2cm);
    \draw[thick] (6 cm,0) circle (.2cm);
    \draw (1.5 cm,-0.75) node  {\scriptsize{0}};
    \draw (3 cm,-0.75) node  {\scriptsize{0}};
    \draw (4.5 cm,-0.75) node  {\scriptsize{0}};
    \draw (6 cm,-0.75) node  {\scriptsize{2}};
    \draw[thick] (1.7 cm,0) -- +(1.1 cm,0);
    \draw[thick] (3.2 cm,0.1) -- +(1.1 cm,0);
    \draw[thick] (3.2 cm,-0.1) -- +(1.1 cm,0);
    \draw[thick] (3.65cm, 0.2 cm) --(3.9 cm,0)--(3.65cm, -0.2cm);
    \draw[thick] (4.7 cm,0) -- +(1.1 cm,0);
    \draw (2,-0.5) node (y)  {};
    \end{tikzpicture}
    }
      & $x_{6}$ & $e_{1,2,3,4}+e_{2,3^2,4}$ \\
     &$(\tilde{A}_2)_2$ & & $x_{6,2}$ & $e_{1,2,3,4}+e_{2,3^2,4}+e_{1^2,2^3,3^4,4^2}$, $p=2$ \\
\hline
(vii) & $A_2\tilde{A}_1$ &
\begin{tikzpicture}[scale=.4,baseline=(y)]
    \draw[thick] (1.5 cm,0) circle (.2cm);
    \draw[thick] (3 cm,0) circle (.2cm);
    \draw[thick] (4.5 cm,0) circle (.2cm);
    \draw[thick] (6 cm,0) circle (.2cm);
    \draw (1.5 cm,-0.75) node  {\scriptsize{0}};
    \draw (3 cm,-0.75) node  {\scriptsize{0}};
    \draw (4.5 cm,-0.75) node  {\scriptsize{1}};
    \draw (6 cm,-0.75) node  {\scriptsize{0}};
    
    \draw[thick] (1.7 cm,0) -- +(1.1 cm,0);
    \draw[thick] (3.2 cm,0.1) -- +(1.1 cm,0);
    \draw[thick] (3.2 cm,-0.1) -- +(1.1 cm,0);
    \draw[thick] (3.65cm, 0.2 cm) --(3.9 cm,0)--(3.65cm, -0.2cm);
    \draw[thick] (4.7 cm,0) -- +(1.1 cm,0);
    \draw (2,-0.5) node (y)  {};
    \end{tikzpicture}
      & $x_7$ & $e_{1,2^2,3^2}+e_{1,2,3^2,4}+e_{2,3^2,4^2}$ \\
\hline
 \multirow{2}*{(viii)} & $B_2$ & \multirow{2}*{
\begin{tikzpicture}[scale=.4,baseline=(y)]
    \draw[thick] (1.5 cm,0) circle (.2cm);
    \draw[thick] (3 cm,0) circle (.2cm);
    \draw[thick] (4.5 cm,0) circle (.2cm);
    \draw[thick] (6 cm,0) circle (.2cm);
    \draw (1.5 cm,-0.75) node  {\scriptsize{2}};
    \draw (3 cm,-0.75) node  {\scriptsize{0}};
    \draw (4.5 cm,-0.75) node  {\scriptsize{0}};
    \draw (6 cm,-0.75) node  {\scriptsize{1}};
    \draw[thick] (1.7 cm,0) -- +(1.1 cm,0);
    \draw[thick] (3.2 cm,0.1) -- +(1.1 cm,0);
    \draw[thick] (3.2 cm,-0.1) -- +(1.1 cm,0);
    \draw[thick] (3.65cm, 0.2 cm) --(3.9 cm,0)--(3.65cm, -0.2cm);
    \draw[thick] (4.7 cm,0) -- +(1.1 cm,0);
    \draw (2,-0.5) node (y)  {};
    \end{tikzpicture}
    }
      & $x_{8}$ & $e_{1,2,3}+e_{2,3^2,4^2}$ \\
     &$(B_2)_2$  & & $x_{8,2}$ & $e_{1,2}+e_{1,2,3^2}+e_{2,3^2,4^2}$, $p=2$\\
\hline
 \multirow{2}*{(ix)} & $\tilde{A}_2A_1$ &\multirow{2}*{
\begin{tikzpicture}[scale=.4,baseline=(y)]
    \draw[thick] (1.5 cm,0) circle (.2cm);
    \draw[thick] (3 cm,0) circle (.2cm);
    \draw[thick] (4.5 cm,0) circle (.2cm);
    \draw[thick] (6 cm,0) circle (.2cm);
    \draw (1.5 cm,-0.75) node  {\scriptsize{0}};
    \draw (3 cm,-0.75) node  {\scriptsize{1}};
    \draw (4.5 cm,-0.75) node  {\scriptsize{0}};
    \draw (6 cm,-0.75) node  {\scriptsize{1}};
    \draw[thick] (1.7 cm,0) -- +(1.1 cm,0);
    \draw[thick] (3.2 cm,0.1) -- +(1.1 cm,0);
    \draw[thick] (3.2 cm,-0.1) -- +(1.1 cm,0);
    \draw[thick] (3.65cm, 0.2 cm) --(3.9 cm,0)--(3.65cm, -0.2cm);
    \draw[thick] (4.7 cm,0) -- +(1.1 cm,0);
    \draw (2,-0.5) node (y)  {};
    \end{tikzpicture}
    }
    & $x_{9}$ & $e_{1,2,3}+e_{2,3^2,4}+e_{1,2^2,3^2,4^2}$ \\
      & $(\tilde{A}_2A_1)_2$  & & $x_{9,2}$ & $e_{1,2,3}+e_{2,3^2,4}+e_{1,2^2,3^2,4^2}+e_{1,2^2,3^2}$, $p=2$ \\
      
\hline
 \multirow{2}*{(x)} &  $C_3(a_1)$ & \multirow{2}*{
\begin{tikzpicture}[scale=.4,baseline=(y)]
    \draw[thick] (1.5 cm,0) circle (.2cm);
    \draw[thick] (3 cm,0) circle (.2cm);
    \draw[thick] (4.5 cm,0) circle (.2cm);
    \draw[thick] (6 cm,0) circle (.2cm);
    \draw (1.5 cm,-0.75) node  {\scriptsize{1}};
    \draw (3 cm,-0.75) node  {\scriptsize{0}};
    \draw (4.5 cm,-0.75) node  {\scriptsize{1}};
    \draw (6 cm,-0.75) node  {\scriptsize{0}};
    \draw[thick] (1.7 cm,0) -- +(1.1 cm,0);
    \draw[thick] (3.2 cm,0.1) -- +(1.1 cm,0);
    \draw[thick] (3.2 cm,-0.1) -- +(1.1 cm,0);
    \draw[thick] (3.65cm, 0.2 cm) --(3.9 cm,0)--(3.65cm, -0.2cm);
    \draw[thick] (4.7 cm,0) -- +(1.1 cm,0);
    \draw (2,-0.5) node (y)  {};
    \end{tikzpicture}
    }
      & $x_{10}$ & $e_{1,2,3}+e_{2,3^2,4}+e_{2,3^2,4^2}$ \\
     & $(C_3(a_1))_2$ & & $x_{10,2}$ & $e_{1,2}+e_{2,3^2,4}+e_{1,2,3^2,4^2}+e_{1,2^2,3^4,4^2}$, $p=2$ \\
\hline
(xi) & $F_4(a_3)$ &
\begin{tikzpicture}[scale=.4,baseline=(y)]
    \draw[thick] (1.5 cm,0) circle (.2cm);
    \draw[thick] (3 cm,0) circle (.2cm);
    \draw[thick] (4.5 cm,0) circle (.2cm);
    \draw[thick] (6 cm,0) circle (.2cm);
    \draw (1.5 cm,-0.75) node  {\scriptsize{0}};
    \draw (3 cm,-0.75) node  {\scriptsize{2}};
    \draw (4.5 cm,-0.75) node  {\scriptsize{0}};
    \draw (6 cm,-0.75) node  {\scriptsize{0}};
    
    \draw[thick] (1.7 cm,0) -- +(1.1 cm,0);
    \draw[thick] (3.2 cm,0.1) -- +(1.1 cm,0);
    \draw[thick] (3.2 cm,-0.1) -- +(1.1 cm,0);
    \draw[thick] (3.65cm, 0.2 cm) --(3.9 cm,0)--(3.65cm, -0.2cm);
    \draw[thick] (4.7 cm,0) -- +(1.1 cm,0);
    \draw (2,-0.5) node (y)  {};
    \end{tikzpicture}
      & $x_{11}$ & $e_{1,2,3}+e_{1,2,3^2}+e_{2,3,4}+e_{2^2,3^2,4^2}$ \\
\hline
(xii) & $B_3$ &
\begin{tikzpicture}[scale=.4,baseline=(y)]
    \draw[thick] (1.5 cm,0) circle (.2cm);
    \draw[thick] (3 cm,0) circle (.2cm);
    \draw[thick] (4.5 cm,0) circle (.2cm);
    \draw[thick] (6 cm,0) circle (.2cm);
    \draw (1.5 cm,-0.75) node  {\scriptsize{2}};
    \draw (3 cm,-0.75) node  {\scriptsize{2}};
    \draw (4.5 cm,-0.75) node  {\scriptsize{0}};
    \draw (6 cm,-0.75) node  {\scriptsize{0}};
    
    \draw[thick] (1.7 cm,0) -- +(1.1 cm,0);
    \draw[thick] (3.2 cm,0.1) -- +(1.1 cm,0);
    \draw[thick] (3.2 cm,-0.1) -- +(1.1 cm,0);
    \draw[thick] (3.65cm, 0.2 cm) --(3.9 cm,0)--(3.65cm, -0.2cm);
    \draw[thick] (4.7 cm,0) -- +(1.1 cm,0);
    \draw (2,-0.5) node (y)  {};
    \end{tikzpicture}
      & $x_{12}$ & $e_{1 }+e_{2,3}+e_{2,3,4}+e_{2,3^2,4^2}$ \\
      \hline
\multirow{2}*{(xiii)} & $C_3$ & \multirow{2}*{
\begin{tikzpicture}[scale=.4,baseline=(y)]
    \draw[thick] (1.5 cm,0) circle (.2cm);
    \draw[thick] (3 cm,0) circle (.2cm);
    \draw[thick] (4.5 cm,0) circle (.2cm);
    \draw[thick] (6 cm,0) circle (.2cm);
    \draw (1.5 cm,-0.75) node  {\scriptsize{1}};
    \draw (3 cm,-0.75) node  {\scriptsize{0}};
    \draw (4.5 cm,-0.75) node  {\scriptsize{1}};
    \draw (6 cm,-0.75) node  {\scriptsize{2}};
    \draw[thick] (1.7 cm,0) -- +(1.1 cm,0);
    \draw[thick] (3.2 cm,0.1) -- +(1.1 cm,0);
    \draw[thick] (3.2 cm,-0.1) -- +(1.1 cm,0);
    \draw[thick] (3.65cm, 0.2 cm) --(3.9 cm,0)--(3.65cm, -0.2cm);
    \draw[thick] (4.7 cm,0) -- +(1.1 cm,0);
    \draw (2,-0.5) node (y)  {};
    \end{tikzpicture}
    }
	 & $x_{13}$ & $e_{4}+e_{1,2,3}+e_{2,3^2}$  \\    
    & $(C_3)_2$  & & $x_{13,2}$ & $e_{4}+e_{1,2,3}+e_{2,3^2}+e_{1,2^2,3^2,4^2}$, $p=2$\\ \hline
(xiv) & $F_4(a_2)$ &
\begin{tikzpicture}[scale=.4,baseline=(y)]
    \draw[thick] (1.5 cm,0) circle (.2cm);
    \draw[thick] (3 cm,0) circle (.2cm);
    \draw[thick] (4.5 cm,0) circle (.2cm);
    \draw[thick] (6 cm,0) circle (.2cm);
    \draw (1.5 cm,-0.75) node  {\scriptsize{0}};
    \draw (3 cm,-0.75) node  {\scriptsize{2}};
    \draw (4.5 cm,-0.75) node  {\scriptsize{0}};
    \draw (6 cm,-0.75) node  {\scriptsize{2}};
    
    \draw[thick] (1.7 cm,0) -- +(1.1 cm,0);
    \draw[thick] (3.2 cm,0.1) -- +(1.1 cm,0);
    \draw[thick] (3.2 cm,-0.1) -- +(1.1 cm,0);
    \draw[thick] (3.65cm, 0.2 cm) --(3.9 cm,0)--(3.65cm, -0.2cm);
    \draw[thick] (4.7 cm,0) -- +(1.1 cm,0);
    \draw (2,-0.5) node (y)  {};
    \end{tikzpicture}
      & $x_{14}$ & $e_{1,2}+e_{2,3}+e_{3,4}+e_{1,2,3^2}$ \\
\hline
(xv) & $F_4(a_1)$ &
\begin{tikzpicture}[scale=.4,baseline=(y)]
    \draw[thick] (1.5 cm,0) circle (.2cm);
    \draw[thick] (3 cm,0) circle (.2cm);
    \draw[thick] (4.5 cm,0) circle (.2cm);
    \draw[thick] (6 cm,0) circle (.2cm);
    \draw (1.5 cm,-0.75) node  {\scriptsize{2}};
    \draw (3 cm,-0.75) node  {\scriptsize{2}};
    \draw (4.5 cm,-0.75) node  {\scriptsize{0}};
    \draw (6 cm,-0.75) node  {\scriptsize{2}};
    
    \draw[thick] (1.7 cm,0) -- +(1.1 cm,0);
    \draw[thick] (3.2 cm,0.1) -- +(1.1 cm,0);
    \draw[thick] (3.2 cm,-0.1) -- +(1.1 cm,0);
    \draw[thick] (3.65cm, 0.2 cm) --(3.9 cm,0)--(3.65cm, -0.2cm);
    \draw[thick] (4.7 cm,0) -- +(1.1 cm,0);
    \draw (2,-0.5) node (y)  {};
    \end{tikzpicture}
      & $x_{15}$ & $e_{1}+e_{4}+e_{2,3}+e_{2,3^2}$ \\
\hline
(xvi) & $F_4$ &
\begin{tikzpicture}[scale=.4,baseline=(y)]
    \draw[thick] (1.5 cm,0) circle (.2cm);
    \draw[thick] (3 cm,0) circle (.2cm);
    \draw[thick] (4.5 cm,0) circle (.2cm);
    \draw[thick] (6 cm,0) circle (.2cm);
    \draw (1.5 cm,-0.75) node  {\scriptsize{2}};
    \draw (3 cm,-0.75) node  {\scriptsize{2}};
    \draw (4.5 cm,-0.75) node  {\scriptsize{2}};
    \draw (6 cm,-0.75) node  {\scriptsize{2}};
    
    \draw[thick] (1.7 cm,0) -- +(1.1 cm,0);
    \draw[thick] (3.2 cm,0.1) -- +(1.1 cm,0);
    \draw[thick] (3.2 cm,-0.1) -- +(1.1 cm,0);
    \draw[thick] (3.65cm, 0.2 cm) --(3.9 cm,0)--(3.65cm, -0.2cm);
    \draw[thick] (4.7 cm,0) -- +(1.1 cm,0);
    \draw (2,-0.5) node (y)  {};
    \end{tikzpicture}
      & $x_{16}$ & $e_{1}+e_{2}+e_{3}+e_{4}$ \\
\hline
\end{longtable}
Applying the programme described in section \ref{computation} results in the following pieces in characteristic $2$ (with the same notation as for $G_2$):
\begin{multicols}{2}
\begin{enumerate}
\item $\cN_1=O_1$,
\item $\cN_2=O_2$,
\item $\cN_3=O_{3}\cup O_{3,2}$,
\item $\cN_4=O_4$,
\item $\cN_5=O_5$,
\item $\cN_6=O_{6}\cup O_{6,2}$,
\item $\cN_7=O_{7}$,
\item $\cN_8=O_{8}\cup O_{8,2}$,
\item $\cN_9=O_{9}\cup O_{9,2}$,
\item $\cN_{10}=O_{10}\cup O_{10,2}$,
\item $\cN_{11}=O_{11}$,
\item $\cN_{12}=O_{12}$,
\item $\cN_{13}=O_{13}\cup O_{13,2}$,
\item $\cN_{14}=O_{14}$,
\item $\cN_{15}=O_{15}$,
\item $\cN_{16}=O_{16}$,
\end{enumerate}
\end{multicols}
\noindent where $\cN_{16}$ follows by Proposition \ref{regpiece}.\\
Note that each of the pairs of orbit representatives $x_{3}$ and $x_{3,2}$, as well as $x_{6}$ and $x_{6,2}$, $x_{8}$ and $x_{8,2}$, $x_{9}$ and $x_{9,2}$, $x_{10}$ and $x_{10,2}$ and $x_{13}$ and $x_{13,2}$ are conjugate to each other in good characteristic. In particular, the pieces contain the same orbits (or the orbits they split into) as in good characteristic. We expect this to be a pattern that should hopefully hold for the other exceptional groups as well.
In characteristic 3 we use the same orbit representatives (i.e. mapping the coefficients of the linear combinations of basis elements into $\k$).
Applying the programme results in the same pieces as in characteristic 0, i.e. we have $\cN_i=O_i$ for all $i=1,\ldots, 16$ with the same notation as above.
\begin{thm}[Nilpotent pieces for $F_4$ in characteristic $2$ and $3$]
We use the same notation as above. Then the pieces for $F_4$ in characteristic $2$ are given by 
$\cN_i=\cO_i\cup\cO_{i,2}$ for $i\in\{3,6,8,9,10,13\}$  and $\cN_i=\cO_i$ otherwise.\\
In characteristic $3$ we get  $\cN_i=\cO_i$ for all $1\leq i\leq 16$. In particular, the nilpotent pieces form a partition of the nilpotent variety and therefore agree with the CP-pieces by Remark \ref{rem_CP-pieces}.
\end{thm}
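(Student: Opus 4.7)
The plan is to reduce the theorem to a finite case analysis: for each nilpotent orbit representative $x$ listed in the table and each of the $16$ weighted Dynkin diagrams $\delta_j$, decide whether $x \in \cN_{\fg}^{\btri_{\delta_j}}$. By the corollary to Lemma~\ref{ad_on_gd}, placing the representative places the whole orbit, so proving the stated pieces amounts to confirming precisely this assignment. The machinery built in Sections~\ref{thenilpieces} and~\ref{computation} is tailored to do exactly this, one pair $(x, \delta_j)$ at a time.

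First I would dispose of the cases that demand no serious computation. For every non-exceptional representative $x_i$, Lemma~\ref{diagcase} immediately gives the diagonal placement $x_i \in \cN_i$, so $\cO_{x_i} \subseteq \cN_i$ is free in both characteristics; in characteristic $3$ this already determines every piece up to exclusions. The regular piece $\cN_{16}$ is handled directly by Proposition~\ref{regpiece}, which identifies it with $\cO_{x_{16}}$ and in particular forbids any other orbit from lying there. What remains is twofold: (a) for the six exceptional representatives $x_{i,2}$ in characteristic $2$ (with $i \in \{3,6,8,9,10,13\}$), show that $x_{i,2} \in \cN_i$; and (b) for every representative $x$ and every piece $\cN_j$ to which $x$ should not belong, show that $x \notin \cN_j$.

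Both (a) and (b) are carried out by running the algorithm of Section~\ref{computation}. For the inclusions (a), I would fix $\delta = \delta_i$, hunt for a coset representative $w \in W / W_0^{\delta}$ and a specialisation of the Bruhat parameters $c_\alpha$ such that $\Ad(n_w u)(x_{i,2}) \in \fg_{\geq 2}^{\delta}$, and then exhibit an element of $G_{\geq 0}^{\delta}$ carrying the $\fg_2^{\delta}$-component of the resulting specialisation onto the generic one, thereby landing in $\fg_2^{\delta!}$ by Case~1a of Step~4. The fact that $x_{i,2}$ differs from $x_i$ only by a summand living in $\fg_{\geq 3}^{\delta_i}$ makes it very plausible that the witnesses used to place $x_i$ transfer verbatim to $x_{i,2}$. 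For the exclusions (b), I would run the dual check: for every $w$ in a right transversal of $W_0^{\delta_j}$ in $W$, either the system $\Ad(n_w u)(x) \in \fg_{\geq 2}^{\delta_j}$ has no solution in the $c_\alpha$, or an explicit element of $G \setminus G_{\geq 0}^{\delta_j}$ stabilises the resulting $\fg_2^{\delta_j}$-component. All such checks reduce to Gröbner-basis calculations over the function field generated by the $c_\alpha$, and Lemmas~\ref{checkg2d}, \ref{nw_u}, together with the proposition immediately following Lemma~\ref{nw_u}, are invoked throughout to collapse whole $W_0^{\delta_j}$-cosets at once.

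The main obstacle will be the exclusion step (b). An inclusion only needs a single witness and, once Lemma~\ref{diagcase} and Proposition~\ref{regpiece} have been applied, is a mild computation. The exclusions, by contrast, are universal statements over the Weyl group and the polynomial parameters, and the Gröbner systems that arise can be unwieldy; this is the reason the Magma implementation eliminates linear variables first and relies heavily on the coset reductions to keep the number of Weyl group elements to be processed small. Once every orbit has been uniquely placed, the stated pieces are automatically pairwise disjoint, and together with Lemma~\ref{diagcase} they exhaust $\cN_{\fg}$. The partition claim follows, and the identification with the CP-pieces is then immediate from Remark~\ref{rem_CP-pieces}.
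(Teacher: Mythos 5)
Your proposal takes essentially the same route as the paper: the theorem is a report of the output of the Magma implementation of the algorithm from Section~\ref{computation}, run for each pair (orbit representative, weighted Dynkin diagram), with Lemma~\ref{diagcase} handling the diagonal cases, Proposition~\ref{regpiece} the regular piece, and the coset-reduction lemmas keeping the Weyl group loop manageable. One side remark in your write-up is incorrect and worth flagging: for several of the exceptional representatives the difference $x_{i,2}-x_i$ does \emph{not} lie in $\fg_{\geq 3}^{\delta_i}$ --- for instance $x_{3,2}-x_3=e_{1^2,2^3,3^4,4^2}$ lies in $\fg_2^{\delta_3}$, $x_{8,2}$ and $x_8$ differ by terms in $\fg_2^{\delta_8}$, and $x_{10,2}$ even contains the summand $e_{1,2}$ of $\delta_{10}$-weight $1$ --- so the inclusion $x_{i,2}\in\cN_i$ cannot be inherited for free from the placement of $x_i$ and must be established by a separate run of the algorithm, exactly as the paper does.
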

\subsection{\texorpdfstring{$E_6$}{}}
If the root system is of type $E_6$, we get the same amount of orbits in good and bad characteristic. In fact we can choose the \enquote{same} orbit representatives for each characteristic, where the coefficients of the $e_{\al}$ are either $0$ or $1$. We list them in the table below, together with the weighted Dynkin diagrams describing the orbits in good characteristic. As the root system is getting bigger, we will now denote the elements $e_{\al}$ spanning $\fg_{\al}$, $\al\in\Phi$, by $e_{1^{i_1},2^{i_2},\ldots,6^{i_6}}:=e_{i_1\al_1+i_2\al_2+\cdots+i_6\al_6}$. If $i_j=0$ we will simply leave $j^{i_j}$ out. Here, the roots $\{\al_1,\al_2,\ldots,\al_6\}$ are the simple roots of $\Phi$ as denoted in the Dynkin diagram:
\vspace*{-0.2cm}
\begin{center}
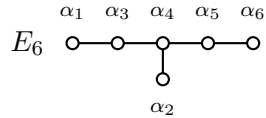
\begin{figure}[h]
\begin{tikzpicture}[scale=.4]
\draw (0,0) node {$E_6$};
    \draw[thick] (1.5 cm,0) circle (.2cm);
    \draw[thick] (3 cm,0) circle (.2cm);
    \draw[thick] (4.5 cm,0) circle (.2cm);
    \draw[thick] (6 cm,0) circle (.2cm);
    \draw[thick] (7.5 cm,0) circle (.2cm);
    \draw[thick] (4.5 cm,-1.2) circle (.2cm);
    
    \draw[thick] (1.7 cm,0) -- +(1.1 cm,0);
    \draw[thick] (3.2 cm,0) -- +(1.1 cm,0);
    \draw[thick] (4.7 cm,0) -- +(1.1 cm,0);
    \draw[thick] (6.2 cm,0) -- +(1.1 cm,0);
    \draw[thick] (4.5 cm,-0.2) -- +(0,-0.8cm);
    
    \draw (1.5 cm,1) node  {\scriptsize{$\al_1$}};
    \draw (4.5 cm,-2.2) node  {\scriptsize{$\al_2$}};
    \draw (3 cm,1) node  {\scriptsize{$\al_3$}};
    \draw (4.5 cm,1) node  {\scriptsize{$\al_4$}};
    \draw (6 cm,1) node  {\scriptsize{$\al_5$}};
    \draw (7.5 cm,1) node  {\scriptsize{$\al_6$}};
    \end{tikzpicture}
\small
\vspace*{-0.2cm}
\caption{The Dynkin diagram of type $E_6$}
 \normalsize
\end{figure}
\end{center}
\begin{center}
\begin{longtable}{|r|c|c|r@{:= }>{\RaggedRight}p{3cm}|}
\hline
&  \multirow{2}*{Label} &  \multirow{2}*{Weighted Dynkin Diagram} & \multicolumn{2}{|c|}{Nilpotent orbit}\\[-0.4cm]
& & &\multicolumn{2}{|c|}{representative}\\
\hline
(i) & $1$ &
\begin{tikzpicture}[scale=.3, baseline=(y)]
    \draw[thick] (1.5 cm,0) circle (.2cm);
    \draw[thick] (3 cm,0) circle (.2cm);
    \draw[thick] (4.5 cm,0) circle (.2cm);
    \draw[thick] (6 cm,0) circle (.2cm);
    \draw[thick] (7.5 cm,0) circle (.2cm);
    \draw[thick] (4.5 cm,-1.2) circle (.2cm);
    
    \draw[thick] (1.7 cm,0) -- +(1.1 cm,0);
    \draw[thick] (3.2 cm,0) -- +(1.1 cm,0);
    \draw[thick] (4.7 cm,0) -- +(1.1 cm,0);
    \draw[thick] (6.2 cm,0) -- +(1.1 cm,0);
    \draw[thick] (4.5 cm,-0.2) -- +(0,-0.8cm);
    
    \draw (1.5 cm,1) node  {\scriptsize{$0$}};
    \draw (4.5 cm,-2.2) node  {\scriptsize{$0$}};
    \draw (3 cm,1) node  {\scriptsize{$0$}};
    \draw (4.5 cm,1) node  {\scriptsize{$0$}};
    \draw (6 cm,1) node  {\scriptsize{$0$}};
    \draw (7.5 cm,1) node  {\scriptsize{$0$}};
    
    \draw (0,-1) node (y)  {};
    \end{tikzpicture}
    & $x_1$ & $0$ \\
\hline
(ii) & $A_1$ &
\begin{tikzpicture}[scale=.3, baseline=(y)]
    \draw[thick] (1.5 cm,0) circle (.2cm);
    \draw[thick] (3 cm,0) circle (.2cm);
    \draw[thick] (4.5 cm,0) circle (.2cm);
    \draw[thick] (6 cm,0) circle (.2cm);
    \draw[thick] (7.5 cm,0) circle (.2cm);
    \draw[thick] (4.5 cm,-1.2) circle (.2cm);
    
    \draw[thick] (1.7 cm,0) -- +(1.1 cm,0);
    \draw[thick] (3.2 cm,0) -- +(1.1 cm,0);
    \draw[thick] (4.7 cm,0) -- +(1.1 cm,0);
    \draw[thick] (6.2 cm,0) -- +(1.1 cm,0);
    \draw[thick] (4.5 cm,-0.2) -- +(0,-0.8cm);
    
    \draw (1.5 cm,1) node  {\scriptsize{$0$}};
    \draw (4.5 cm,-2.2) node  {\scriptsize{$1$}};
    \draw (3 cm,1) node  {\scriptsize{$0$}};
    \draw (4.5 cm,1) node  {\scriptsize{$0$}};
    \draw (6 cm,1) node  {\scriptsize{$0$}};
    \draw (7.5 cm,1) node  {\scriptsize{$0$}};
    
    \draw (0,-1) node (y)  {};
    \end{tikzpicture}
    & $x_2$&$e_{1,2^2,3^2,4^3,5^2,6}$
\\
\hline
(iii) & $A_1^2$ &
\begin{tikzpicture}[scale=.3, baseline=(y)]
    \draw[thick] (1.5 cm,0) circle (.2cm);
    \draw[thick] (3 cm,0) circle (.2cm);
    \draw[thick] (4.5 cm,0) circle (.2cm);
    \draw[thick] (6 cm,0) circle (.2cm);
    \draw[thick] (7.5 cm,0) circle (.2cm);
    \draw[thick] (4.5 cm,-1.2) circle (.2cm);
    
    \draw[thick] (1.7 cm,0) -- +(1.1 cm,0);
    \draw[thick] (3.2 cm,0) -- +(1.1 cm,0);
    \draw[thick] (4.7 cm,0) -- +(1.1 cm,0);
    \draw[thick] (6.2 cm,0) -- +(1.1 cm,0);
    \draw[thick] (4.5 cm,-0.2) -- +(0,-0.8cm);
    
    \draw (1.5 cm,1) node  {\scriptsize{$1$}};
    \draw (4.5 cm,-2.2) node  {\scriptsize{$0$}};
    \draw (3 cm,1) node  {\scriptsize{$0$}};
    \draw (4.5 cm,1) node  {\scriptsize{$0$}};
    \draw (6 cm,1) node  {\scriptsize{$0$}};
    \draw (7.5 cm,1) node  {\scriptsize{$1$}};
    
    \draw (0,-1) node (y)  {};
    \end{tikzpicture}
    & $x_3$&$e_{1,2,3^2,4^2,5,6}+e_{1,2,3,4^2,5^2,6}$
\\
\hline
(iv) & $A_2$ &
\begin{tikzpicture}[scale=.3, baseline=(y)]
    \draw[thick] (1.5 cm,0) circle (.2cm);
    \draw[thick] (3 cm,0) circle (.2cm);
    \draw[thick] (4.5 cm,0) circle (.2cm);
    \draw[thick] (6 cm,0) circle (.2cm);
    \draw[thick] (7.5 cm,0) circle (.2cm);
    \draw[thick] (4.5 cm,-1.2) circle (.2cm);
    
    \draw[thick] (1.7 cm,0) -- +(1.1 cm,0);
    \draw[thick] (3.2 cm,0) -- +(1.1 cm,0);
    \draw[thick] (4.7 cm,0) -- +(1.1 cm,0);
    \draw[thick] (6.2 cm,0) -- +(1.1 cm,0);
    \draw[thick] (4.5 cm,-0.2) -- +(0,-0.8cm);
    
    \draw (1.5 cm,1) node  {\scriptsize{$0$}};
    \draw (4.5 cm,-2.2) node  {\scriptsize{$2$}};
    \draw (3 cm,1) node  {\scriptsize{$0$}};
    \draw (4.5 cm,1) node  {\scriptsize{$0$}};
    \draw (6 cm,1) node  {\scriptsize{$0$}};
    \draw (7.5 cm,1) node  {\scriptsize{$0$}};
    
    \draw (0,-1) node (y)  {};
    \end{tikzpicture}
    & $x_4$&$e_{2,3,4,5,6}+e_{1,2,3,4^2,5}$
\\
\hline
(v) & $A_1^3$ &
\begin{tikzpicture}[scale=.3, baseline=(y)]
    \draw[thick] (1.5 cm,0) circle (.2cm);
    \draw[thick] (3 cm,0) circle (.2cm);
    \draw[thick] (4.5 cm,0) circle (.2cm);
    \draw[thick] (6 cm,0) circle (.2cm);
    \draw[thick] (7.5 cm,0) circle (.2cm);
    \draw[thick] (4.5 cm,-1.2) circle (.2cm);
    
    \draw[thick] (1.7 cm,0) -- +(1.1 cm,0);
    \draw[thick] (3.2 cm,0) -- +(1.1 cm,0);
    \draw[thick] (4.7 cm,0) -- +(1.1 cm,0);
    \draw[thick] (6.2 cm,0) -- +(1.1 cm,0);
    \draw[thick] (4.5 cm,-0.2) -- +(0,-0.8cm);
    
    \draw (1.5 cm,1) node  {\scriptsize{$0$}};
    \draw (4.5 cm,-2.2) node  {\scriptsize{$0$}};
    \draw (3 cm,1) node  {\scriptsize{$0$}};
    \draw (4.5 cm,1) node  {\scriptsize{$1$}};
    \draw (6 cm,1) node  {\scriptsize{$0$}};
    \draw (7.5 cm,1) node  {\scriptsize{$0$}};
    
    \draw (0,-1) node (y)  {};
    \end{tikzpicture}
    & $x_5$&$e_{1,2,3^2,4^2,5}+e_{1,2,3,4^2,5,6}+e_{2,3,4^2,5^2,6}$\\
\hline
(vi) & $A_2A_1$ &
\begin{tikzpicture}[scale=.3, baseline=(y)]
    \draw[thick] (1.5 cm,0) circle (.2cm);
    \draw[thick] (3 cm,0) circle (.2cm);
    \draw[thick] (4.5 cm,0) circle (.2cm);
    \draw[thick] (6 cm,0) circle (.2cm);
    \draw[thick] (7.5 cm,0) circle (.2cm);
    \draw[thick] (4.5 cm,-1.2) circle (.2cm);
    
    \draw[thick] (1.7 cm,0) -- +(1.1 cm,0);
    \draw[thick] (3.2 cm,0) -- +(1.1 cm,0);
    \draw[thick] (4.7 cm,0) -- +(1.1 cm,0);
    \draw[thick] (6.2 cm,0) -- +(1.1 cm,0);
    \draw[thick] (4.5 cm,-0.2) -- +(0,-0.8cm);
    
    \draw (1.5 cm,1) node  {\scriptsize{$1$}};
    \draw (4.5 cm,-2.2) node  {\scriptsize{$1$}};
    \draw (3 cm,1) node  {\scriptsize{$0$}};
    \draw (4.5 cm,1) node  {\scriptsize{$0$}};
    \draw (6 cm,1) node  {\scriptsize{$0$}};
    \draw (7.5 cm,1) node  {\scriptsize{$1$}};
    
    \draw (0,-1) node (y)  {};
    \end{tikzpicture}
    & $x_6$&$e_{1,3,4,5,6}+e_{2,3,4,5,6}+e_{1,2,3,4^2,5}$ \\
\hline
(vii) & $A_3$ &
\begin{tikzpicture}[scale=.3, baseline=(y)]
    \draw[thick] (1.5 cm,0) circle (.2cm);
    \draw[thick] (3 cm,0) circle (.2cm);
    \draw[thick] (4.5 cm,0) circle (.2cm);
    \draw[thick] (6 cm,0) circle (.2cm);
    \draw[thick] (7.5 cm,0) circle (.2cm);
    \draw[thick] (4.5 cm,-1.2) circle (.2cm);
    
    \draw[thick] (1.7 cm,0) -- +(1.1 cm,0);
    \draw[thick] (3.2 cm,0) -- +(1.1 cm,0);
    \draw[thick] (4.7 cm,0) -- +(1.1 cm,0);
    \draw[thick] (6.2 cm,0) -- +(1.1 cm,0);
    \draw[thick] (4.5 cm,-0.2) -- +(0,-0.8cm);
    
    \draw (1.5 cm,1) node  {\scriptsize{$1$}};
    \draw (4.5 cm,-2.2) node  {\scriptsize{$2$}};
    \draw (3 cm,1) node  {\scriptsize{$0$}};
    \draw (4.5 cm,1) node  {\scriptsize{$0$}};
    \draw (6 cm,1) node  {\scriptsize{$0$}};
    \draw (7.5 cm,1) node  {\scriptsize{$1$}};
    
    \draw (0,-1) node (y)  {};
    \end{tikzpicture}
    & $x_7$&$e_{2,3,4}+e_{2,4,5}+e_{1,3,4,5,6}$\\
\hline
(viii) & $A_2A_1^2$ &
\begin{tikzpicture}[scale=.3, baseline=(y)]
    \draw[thick] (1.5 cm,0) circle (.2cm);
    \draw[thick] (3 cm,0) circle (.2cm);
    \draw[thick] (4.5 cm,0) circle (.2cm);
    \draw[thick] (6 cm,0) circle (.2cm);
    \draw[thick] (7.5 cm,0) circle (.2cm);
    \draw[thick] (4.5 cm,-1.2) circle (.2cm);
    
    \draw[thick] (1.7 cm,0) -- +(1.1 cm,0);
    \draw[thick] (3.2 cm,0) -- +(1.1 cm,0);
    \draw[thick] (4.7 cm,0) -- +(1.1 cm,0);
    \draw[thick] (6.2 cm,0) -- +(1.1 cm,0);
    \draw[thick] (4.5 cm,-0.2) -- +(0,-0.8cm);
    
    \draw (1.5 cm,1) node  {\scriptsize{$0$}};
    \draw (4.5 cm,-2.2) node  {\scriptsize{$0$}};
    \draw (3 cm,1) node  {\scriptsize{$1$}};
    \draw (4.5 cm,1) node  {\scriptsize{$0$}};
    \draw (6 cm,1) node  {\scriptsize{$1$}};
    \draw (7.5 cm,1) node  {\scriptsize{$0$}};
    
    \draw (0,-1) node (y)  {};
    \end{tikzpicture}
    & $x_8$&$e_{1,2,3,4,5}+e_{1,3,4,5,6}+e_{2,3,4^2,5}+e_{2,3,4,5,6}$\\
\hline
(ix) & $A_2^2$ &
\begin{tikzpicture}[scale=.3, baseline=(y)]
    \draw[thick] (1.5 cm,0) circle (.2cm);
    \draw[thick] (3 cm,0) circle (.2cm);
    \draw[thick] (4.5 cm,0) circle (.2cm);
    \draw[thick] (6 cm,0) circle (.2cm);
    \draw[thick] (7.5 cm,0) circle (.2cm);
    \draw[thick] (4.5 cm,-1.2) circle (.2cm);
    
    \draw[thick] (1.7 cm,0) -- +(1.1 cm,0);
    \draw[thick] (3.2 cm,0) -- +(1.1 cm,0);
    \draw[thick] (4.7 cm,0) -- +(1.1 cm,0);
    \draw[thick] (6.2 cm,0) -- +(1.1 cm,0);
    \draw[thick] (4.5 cm,-0.2) -- +(0,-0.8cm);
    
    \draw (1.5 cm,1) node  {\scriptsize{$2$}};
    \draw (4.5 cm,-2.2) node  {\scriptsize{$0$}};
    \draw (3 cm,1) node  {\scriptsize{$0$}};
    \draw (4.5 cm,1) node  {\scriptsize{$0$}};
    \draw (6 cm,1) node  {\scriptsize{$0$}};
    \draw (7.5 cm,1) node  {\scriptsize{$2$}};
    
    \draw (0,-1) node (y)  {};
    \end{tikzpicture}
    & $x_9$&$e_{1,2,3,4}+e_{1,3,4,5}+e_{2,4,5,6}+e_{3,4,5,6}$\\
\hline
(x) & $A_3A_1$ &
\begin{tikzpicture}[scale=.3, baseline=(y)]
    \draw[thick] (1.5 cm,0) circle (.2cm);
    \draw[thick] (3 cm,0) circle (.2cm);
    \draw[thick] (4.5 cm,0) circle (.2cm);
    \draw[thick] (6 cm,0) circle (.2cm);
    \draw[thick] (7.5 cm,0) circle (.2cm);
    \draw[thick] (4.5 cm,-1.2) circle (.2cm);
    
    \draw[thick] (1.7 cm,0) -- +(1.1 cm,0);
    \draw[thick] (3.2 cm,0) -- +(1.1 cm,0);
    \draw[thick] (4.7 cm,0) -- +(1.1 cm,0);
    \draw[thick] (6.2 cm,0) -- +(1.1 cm,0);
    \draw[thick] (4.5 cm,-0.2) -- +(0,-0.8cm);
    
    \draw (1.5 cm,1) node  {\scriptsize{$0$}};
    \draw (4.5 cm,-2.2) node  {\scriptsize{$1$}};
    \draw (3 cm,1) node  {\scriptsize{$1$}};
    \draw (4.5 cm,1) node  {\scriptsize{$0$}};
    \draw (6 cm,1) node  {\scriptsize{$1$}};
    \draw (7.5 cm,1) node  {\scriptsize{$0$}};
    
    \draw (0,-1) node (y)  {};
    \end{tikzpicture}
    & $x_{10}$&$e_{3,4,5}+e_{1,2,3,4}+e_{2,4,5,6}+e_{1,3,4,5,6}$\\
\hline
(xi) & $A_4$ &
\begin{tikzpicture}[scale=.3, baseline=(y)]
    \draw[thick] (1.5 cm,0) circle (.2cm);
    \draw[thick] (3 cm,0) circle (.2cm);
    \draw[thick] (4.5 cm,0) circle (.2cm);
    \draw[thick] (6 cm,0) circle (.2cm);
    \draw[thick] (7.5 cm,0) circle (.2cm);
    \draw[thick] (4.5 cm,-1.2) circle (.2cm);
    
    \draw[thick] (1.7 cm,0) -- +(1.1 cm,0);
    \draw[thick] (3.2 cm,0) -- +(1.1 cm,0);
    \draw[thick] (4.7 cm,0) -- +(1.1 cm,0);
    \draw[thick] (6.2 cm,0) -- +(1.1 cm,0);
    \draw[thick] (4.5 cm,-0.2) -- +(0,-0.8cm);
    
    \draw (1.5 cm,1) node  {\scriptsize{$2$}};
    \draw (4.5 cm,-2.2) node  {\scriptsize{$2$}};
    \draw (3 cm,1) node  {\scriptsize{$0$}};
    \draw (4.5 cm,1) node  {\scriptsize{$0$}};
    \draw (6 cm,1) node  {\scriptsize{$0$}};
    \draw (7.5 cm,1) node  {\scriptsize{$2$}};
    
    \draw (0,-1) node (y)  {};
    \end{tikzpicture}
    & $x_{11}$&$e_{5,6}+e_{1,3,4}+e_{2,3,4}+e_{2,4,5}$\\
\hline
(xii) & $D_4$ &
\begin{tikzpicture}[scale=.3, baseline=(y)]
    \draw[thick] (1.5 cm,0) circle (.2cm);
    \draw[thick] (3 cm,0) circle (.2cm);
    \draw[thick] (4.5 cm,0) circle (.2cm);
    \draw[thick] (6 cm,0) circle (.2cm);
    \draw[thick] (7.5 cm,0) circle (.2cm);
    \draw[thick] (4.5 cm,-1.2) circle (.2cm);
    
    \draw[thick] (1.7 cm,0) -- +(1.1 cm,0);
    \draw[thick] (3.2 cm,0) -- +(1.1 cm,0);
    \draw[thick] (4.7 cm,0) -- +(1.1 cm,0);
    \draw[thick] (6.2 cm,0) -- +(1.1 cm,0);
    \draw[thick] (4.5 cm,-0.2) -- +(0,-0.8cm);
    
    \draw (1.5 cm,1) node  {\scriptsize{$0$}};
    \draw (4.5 cm,-2.2) node  {\scriptsize{$2$}};
    \draw (3 cm,1) node  {\scriptsize{$0$}};
    \draw (4.5 cm,1) node  {\scriptsize{$2$}};
    \draw (6 cm,1) node  {\scriptsize{$0$}};
    \draw (7.5 cm,1) node  {\scriptsize{$0$}};
    
    \draw (0,-1) node (y)  {};
    \end{tikzpicture}
    & $x_{12}$&$e_2+e_{1,3,4}+e_{3,4,5}+e_{4,5,6}$\\
\hline
(xiii) & $D_4(a_1)$ &
\begin{tikzpicture}[scale=.3, baseline=(y)]
    \draw[thick] (1.5 cm,0) circle (.2cm);
    \draw[thick] (3 cm,0) circle (.2cm);
    \draw[thick] (4.5 cm,0) circle (.2cm);
    \draw[thick] (6 cm,0) circle (.2cm);
    \draw[thick] (7.5 cm,0) circle (.2cm);
    \draw[thick] (4.5 cm,-1.2) circle (.2cm);
    
    \draw[thick] (1.7 cm,0) -- +(1.1 cm,0);
    \draw[thick] (3.2 cm,0) -- +(1.1 cm,0);
    \draw[thick] (4.7 cm,0) -- +(1.1 cm,0);
    \draw[thick] (6.2 cm,0) -- +(1.1 cm,0);
    \draw[thick] (4.5 cm,-0.2) -- +(0,-0.8cm);
    
    \draw (1.5 cm,1) node  {\scriptsize{$0$}};
    \draw (4.5 cm,-2.2) node  {\scriptsize{$0$}};
    \draw (3 cm,1) node  {\scriptsize{$0$}};
    \draw (4.5 cm,1) node  {\scriptsize{$2$}};
    \draw (6 cm,1) node  {\scriptsize{$0$}};
    \draw (7.5 cm,1) node  {\scriptsize{$0$}};
    
    \draw (0,-1) node (y)  {};
    \end{tikzpicture}
    & $x_{13}$&$e_{3,4,5}+e_{4,5,6}+e_{1,2,3,4}+e_{2,4,5,6}$\\
\hline
(xiv) & $A_2^2A_1$ &
\begin{tikzpicture}[scale=.3, baseline=(y)]
    \draw[thick] (1.5 cm,0) circle (.2cm);
    \draw[thick] (3 cm,0) circle (.2cm);
    \draw[thick] (4.5 cm,0) circle (.2cm);
    \draw[thick] (6 cm,0) circle (.2cm);
    \draw[thick] (7.5 cm,0) circle (.2cm);
    \draw[thick] (4.5 cm,-1.2) circle (.2cm);
    
    \draw[thick] (1.7 cm,0) -- +(1.1 cm,0);
    \draw[thick] (3.2 cm,0) -- +(1.1 cm,0);
    \draw[thick] (4.7 cm,0) -- +(1.1 cm,0);
    \draw[thick] (6.2 cm,0) -- +(1.1 cm,0);
    \draw[thick] (4.5 cm,-0.2) -- +(0,-0.8cm);
    
    \draw (1.5 cm,1) node  {\scriptsize{$1$}};
    \draw (4.5 cm,-2.2) node  {\scriptsize{$0$}};
    \draw (3 cm,1) node  {\scriptsize{$0$}};
    \draw (4.5 cm,1) node  {\scriptsize{$1$}};
    \draw (6 cm,1) node  {\scriptsize{$0$}};
    \draw (7.5 cm,1) node  {\scriptsize{$1$}};
    
    \draw (0,-1) node (y)  {};
    \end{tikzpicture}
    & $x_{14}$&$e_{1,2,3,4}+e_{1,3,4,5}+e_{2,4,5,6}+e_{3,4,5,6}+e_{2,3,4^2,5}$ \\
\hline
(xv) & $A_4A_1$ &
\begin{tikzpicture}[scale=.3, baseline=(y)]
    \draw[thick] (1.5 cm,0) circle (.2cm);
    \draw[thick] (3 cm,0) circle (.2cm);
    \draw[thick] (4.5 cm,0) circle (.2cm);
    \draw[thick] (6 cm,0) circle (.2cm);
    \draw[thick] (7.5 cm,0) circle (.2cm);
    \draw[thick] (4.5 cm,-1.2) circle (.2cm);
    
    \draw[thick] (1.7 cm,0) -- +(1.1 cm,0);
    \draw[thick] (3.2 cm,0) -- +(1.1 cm,0);
    \draw[thick] (4.7 cm,0) -- +(1.1 cm,0);
    \draw[thick] (6.2 cm,0) -- +(1.1 cm,0);
    \draw[thick] (4.5 cm,-0.2) -- +(0,-0.8cm);
    
    \draw (1.5 cm,1) node  {\scriptsize{$1$}};
    \draw (4.5 cm,-2.2) node  {\scriptsize{$1$}};
    \draw (3 cm,1) node  {\scriptsize{$1$}};
    \draw (4.5 cm,1) node  {\scriptsize{$0$}};
    \draw (6 cm,1) node  {\scriptsize{$1$}};
    \draw (7.5 cm,1) node  {\scriptsize{$1$}};
    
    \draw (0,-1) node (y)  {};
    \end{tikzpicture}
    & $x_{15}$&$e_{5,6}+e_{1,3,4}+e_{2,3,4}+e_{2,4,5}+e_{3,4,5}$\\
\hline
(xvi) & $A_5$ &
\begin{tikzpicture}[scale=.3, baseline=(y)]
    \draw[thick] (1.5 cm,0) circle (.2cm);
    \draw[thick] (3 cm,0) circle (.2cm);
    \draw[thick] (4.5 cm,0) circle (.2cm);
    \draw[thick] (6 cm,0) circle (.2cm);
    \draw[thick] (7.5 cm,0) circle (.2cm);
    \draw[thick] (4.5 cm,-1.2) circle (.2cm);
    
    \draw[thick] (1.7 cm,0) -- +(1.1 cm,0);
    \draw[thick] (3.2 cm,0) -- +(1.1 cm,0);
    \draw[thick] (4.7 cm,0) -- +(1.1 cm,0);
    \draw[thick] (6.2 cm,0) -- +(1.1 cm,0);
    \draw[thick] (4.5 cm,-0.2) -- +(0,-0.8cm);
    
    \draw (1.5 cm,1) node  {\scriptsize{$2$}};
    \draw (4.5 cm,-2.2) node  {\scriptsize{$1$}};
    \draw (3 cm,1) node  {\scriptsize{$1$}};
    \draw (4.5 cm,1) node  {\scriptsize{$0$}};
    \draw (6 cm,1) node  {\scriptsize{$1$}};
    \draw (7.5 cm,1) node  {\scriptsize{$2$}};
    
    \draw (0,-1) node (y)  {};
    \end{tikzpicture}
    & $x_{16}$&$e_1+e_6+e_{2,3,4}+e_{2,4,5}+e_{3,4,5}$ \\
\hline
(xvii) & $D_5$ &
\begin{tikzpicture}[scale=.3, baseline=(y)]
    \draw[thick] (1.5 cm,0) circle (.2cm);
    \draw[thick] (3 cm,0) circle (.2cm);
    \draw[thick] (4.5 cm,0) circle (.2cm);
    \draw[thick] (6 cm,0) circle (.2cm);
    \draw[thick] (7.5 cm,0) circle (.2cm);
    \draw[thick] (4.5 cm,-1.2) circle (.2cm);
    
    \draw[thick] (1.7 cm,0) -- +(1.1 cm,0);
    \draw[thick] (3.2 cm,0) -- +(1.1 cm,0);
    \draw[thick] (4.7 cm,0) -- +(1.1 cm,0);
    \draw[thick] (6.2 cm,0) -- +(1.1 cm,0);
    \draw[thick] (4.5 cm,-0.2) -- +(0,-0.8cm);
    
    \draw (1.5 cm,1) node  {\scriptsize{$2$}};
    \draw (4.5 cm,-2.2) node  {\scriptsize{$2$}};
    \draw (3 cm,1) node  {\scriptsize{$0$}};
    \draw (4.5 cm,1) node  {\scriptsize{$2$}};
    \draw (6 cm,1) node  {\scriptsize{$0$}};
    \draw (7.5 cm,1) node  {\scriptsize{$2$}};
    
    \draw (0,-1) node (y)  {};
    \end{tikzpicture}
    & $x_{17}$&$e_2+e_6+e_{1,3}+e_{3,4}+e_{4,5}$\\
\hline
(xviii) & $D_5(a_1)$ &
\begin{tikzpicture}[scale=.3, baseline=(y)]
    \draw[thick] (1.5 cm,0) circle (.2cm);
    \draw[thick] (3 cm,0) circle (.2cm);
    \draw[thick] (4.5 cm,0) circle (.2cm);
    \draw[thick] (6 cm,0) circle (.2cm);
    \draw[thick] (7.5 cm,0) circle (.2cm);
    \draw[thick] (4.5 cm,-1.2) circle (.2cm);
    
    \draw[thick] (1.7 cm,0) -- +(1.1 cm,0);
    \draw[thick] (3.2 cm,0) -- +(1.1 cm,0);
    \draw[thick] (4.7 cm,0) -- +(1.1 cm,0);
    \draw[thick] (6.2 cm,0) -- +(1.1 cm,0);
    \draw[thick] (4.5 cm,-0.2) -- +(0,-0.8cm);
    
    \draw (1.5 cm,1) node  {\scriptsize{$1$}};
    \draw (4.5 cm,-2.2) node  {\scriptsize{$2$}};
    \draw (3 cm,1) node  {\scriptsize{$1$}};
    \draw (4.5 cm,1) node  {\scriptsize{$0$}};
    \draw (6 cm,1) node  {\scriptsize{$1$}};
    \draw (7.5 cm,1) node  {\scriptsize{$1$}};
    
    \draw (0,-1) node (y)  {};
    \end{tikzpicture}
    & $x_{18}$&$e_{1,3}+e_{2,4}+e_{5,6}+e_{3,4,5}+e_{4,5,6}$\\
\hline
(ixx) & $E_6(a_1)$ &
\begin{tikzpicture}[scale=.3, baseline=(y)]
    \draw[thick] (1.5 cm,0) circle (.2cm);
    \draw[thick] (3 cm,0) circle (.2cm);
    \draw[thick] (4.5 cm,0) circle (.2cm);
    \draw[thick] (6 cm,0) circle (.2cm);
    \draw[thick] (7.5 cm,0) circle (.2cm);
    \draw[thick] (4.5 cm,-1.2) circle (.2cm);
    
    \draw[thick] (1.7 cm,0) -- +(1.1 cm,0);
    \draw[thick] (3.2 cm,0) -- +(1.1 cm,0);
    \draw[thick] (4.7 cm,0) -- +(1.1 cm,0);
    \draw[thick] (6.2 cm,0) -- +(1.1 cm,0);
    \draw[thick] (4.5 cm,-0.2) -- +(0,-0.8cm);
    
    \draw (1.5 cm,1) node  {\scriptsize{$2$}};
    \draw (4.5 cm,-2.2) node  {\scriptsize{$2$}};
    \draw (3 cm,1) node  {\scriptsize{$2$}};
    \draw (4.5 cm,1) node  {\scriptsize{$0$}};
    \draw (6 cm,1) node  {\scriptsize{$2$}};
    \draw (7.5 cm,1) node  {\scriptsize{$2$}};
    
    \draw (0,-1) node (y)  {};
    \end{tikzpicture}
    & $x_{19}$&$e_1+e_3+e_{2,4}+e_{4,5}+e_5+e_6$\\
\hline
(xx) & $E_6(a_3)$ &
\begin{tikzpicture}[scale=.3, baseline=(y)]
    \draw[thick] (1.5 cm,0) circle (.2cm);
    \draw[thick] (3 cm,0) circle (.2cm);
    \draw[thick] (4.5 cm,0) circle (.2cm);
    \draw[thick] (6 cm,0) circle (.2cm);
    \draw[thick] (7.5 cm,0) circle (.2cm);
    \draw[thick] (4.5 cm,-1.2) circle (.2cm);
    
    \draw[thick] (1.7 cm,0) -- +(1.1 cm,0);
    \draw[thick] (3.2 cm,0) -- +(1.1 cm,0);
    \draw[thick] (4.7 cm,0) -- +(1.1 cm,0);
    \draw[thick] (6.2 cm,0) -- +(1.1 cm,0);
    \draw[thick] (4.5 cm,-0.2) -- +(0,-0.8cm);
    
    \draw (1.5 cm,1) node  {\scriptsize{$2$}};
    \draw (4.5 cm,-2.2) node  {\scriptsize{$0$}};
    \draw (3 cm,1) node  {\scriptsize{$0$}};
    \draw (4.5 cm,1) node  {\scriptsize{$2$}};
    \draw (6 cm,1) node  {\scriptsize{$0$}};
    \draw (7.5 cm,1) node  {\scriptsize{$2$}};
    
    \draw (0,-1) node (y)  {};
    \end{tikzpicture}
    & $x_{20}$&$e_1+e_{3,4}+e_{2,4}+e_{2,4,5}+e_{2,3,4,5}+e_{5,6}$\\
\hline
(xxi) & $E_6$ &
\begin{tikzpicture}[scale=.3, baseline=(y)]
    \draw[thick] (1.5 cm,0) circle (.2cm);
    \draw[thick] (3 cm,0) circle (.2cm);
    \draw[thick] (4.5 cm,0) circle (.2cm);
    \draw[thick] (6 cm,0) circle (.2cm);
    \draw[thick] (7.5 cm,0) circle (.2cm);
    \draw[thick] (4.5 cm,-1.2) circle (.2cm);
    
    \draw[thick] (1.7 cm,0) -- +(1.1 cm,0);
    \draw[thick] (3.2 cm,0) -- +(1.1 cm,0);
    \draw[thick] (4.7 cm,0) -- +(1.1 cm,0);
    \draw[thick] (6.2 cm,0) -- +(1.1 cm,0);
    \draw[thick] (4.5 cm,-0.2) -- +(0,-0.8cm);
    
    \draw (1.5 cm,1) node  {\scriptsize{$2$}};
    \draw (4.5 cm,-2.2) node  {\scriptsize{$2$}};
    \draw (3 cm,1) node  {\scriptsize{$2$}};
    \draw (4.5 cm,1) node  {\scriptsize{$2$}};
    \draw (6 cm,1) node  {\scriptsize{$2$}};
    \draw (7.5 cm,1) node  {\scriptsize{$2$}};
    
    \draw (0,-1) node (y)  {};
    \end{tikzpicture}
    & $x_{21}$&$e_1+e_2+e_3+e_4+e_5+e_6$\\
\hline
\end{longtable}
\end{center}
Applying the programme results in the same pieces as in characteristic 0, i.e. we have $\cN_i=O_i$ for all $i=1,\ldots, 21$ with the same notation as above.
\begin{thm}[Nilpotent pieces for $E_6$ in characteristic $2$ and $3$]
We use the same notation as above. Then the pieces for $E_6$ in both characteristic $2$ and $3$ are given by 
$\cN_i=\cO_i$ for all $1\leq i\leq 21$. In particular, the nilpotent pieces form a partition of the nilpotent variety and therefore agree with the CP-pieces by Remark \ref{rem_CP-pieces}.
\end{thm}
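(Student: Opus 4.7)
The plan is to run the algorithm described in Section \ref{computation} for every pair $(x_i,\delta_j)$ with $1\leq i,j\leq 21$, and to read off which orbit $\cO_{x_i}$ is contained in which piece $\cN_{\fg}^{\btri_{\delta_j}}$. Since every nilpotent class in $E_6$ (in any characteristic under consideration) is non-exceptional and we may take the \emph{same} linear combinations of Chevalley basis elements as representatives in characteristic $0$, $2$, and $3$, the backbone of the argument is a ``diagonal'' input followed by a ``non-diagonal'' verification.

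First I would handle the diagonal cases $i=j$. For each representative $x_i$ listed in the table, the $T$-labelling attached to $\cO_{x_i}$ by Liebeck--Seitz coincides with the weighted Dynkin diagram $\delta_i$ appearing in the table entry for $x_i$. Because every class of $E_6$ is non-exceptional, Lemma \ref{diagcase} applies verbatim and yields $x_i\in\cN_{\fg}^{\btri_{\delta_i}}$, hence $\cO_{x_i}\subseteq \cN_{\fg}^{\btri_{\delta_i}}$ by the corollary following Lemma \ref{ad_on_gd}. The regular piece $\cN_{\fg}^{\btri_{\delta_{21}}}$ is moreover handled independently by Proposition \ref{regpiece}, giving $\cN_{\fg}^{\btri_{\delta_{21}}}=\cO_{x_{21}}$.

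Next I would address the off-diagonal cases $i\neq j$ by running the pseudocode for each such pair. Step~1 of the algorithm (detect a conjugate into $\fg_{\geq 2}^{\delta_j}$) is carried out via a Bruhat-decomposition sweep: by Lemma \ref{checkg2d} it suffices to consider Weyl elements $w$ in a fixed right transversal of $W_0^{\delta_j}$, and within each double coset only the $U_w^-$-part. If no such conjugate exists, $x_i\notin\cN_{\fg}^{\btri_{\delta_j}}$. Otherwise one reduces to the $\fg_2^{\delta_j}$-part $y'_w$, via Lemma \ref{nw.u.x}, and tests membership in $\fg_2^{\delta_j!}$ by searching for a centralizing element outside $G_{\geq 0}^{\delta_j}$, again parametrised by Bruhat coordinates and solved via Gröbner bases using the rules of Section \ref{Ad-action}. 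The outputs of the exhaustive search, compiled in {\sc Magma}, show that $\cO_{x_i}\not\subseteq\cN_{\fg}^{\btri_{\delta_j}}$ for every pair $i\neq j$.

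Combining these two halves gives $\cN_{\fg}^{\btri_{\delta_i}}=\cO_{x_i}$ for all $1\leq i\leq 21$, which forces the pieces to be pairwise disjoint and to exhaust $\cN_{\fg}$ since the nilpotent orbits in $\fg$ already partition $\cN_{\fg}$. Finally, Remark \ref{rem_CP-pieces} yields the coincidence with the CP-pieces. The main obstacle is entirely computational: the Gröbner-basis computations in Step~1 and Step~4 of the algorithm can become prohibitive when $w$ is long, $\fg_2^{\delta_j}$ is large, or $U_w^-$ has many factors, which occurs especially for the subregular and $D$-type diagrams. The symmetry reductions of Lemma \ref{checkg2d}, Lemma \ref{nw_u} and Lemma \ref{nw.u.x}, together with the transversal restriction to $W_0^{\delta_j}\backslash W$ and the auxiliary substitutions $c_i\in\{0,1\}$ described at the end of Section \ref{computation}, are essential to keep each individual Gröbner basis calculation within reach.
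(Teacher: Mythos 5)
Your proposal is correct and follows essentially the same route as the paper: the paper's proof of the $E_6$ theorem is precisely ``applying the programme results in the same pieces as in characteristic 0,'' where the programme is the algorithm of Section~\ref{computation} built on Lemmas~\ref{checkg2d}, \ref{nw_u}, \ref{nw.u.x}, Lemma~\ref{diagcase}, and Proposition~\ref{regpiece}. You merely make explicit what the paper leaves implicit (diagonal cases via Lemma~\ref{diagcase}, the regular piece via Proposition~\ref{regpiece}, off-diagonal cases via the Bruhat/Gröbner sweep, and the resulting partition), so no meaningful difference in approach.
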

\subsection*{Acknowledgemnts}
I would like to thank my PhD advisor Gunter Malle for his suggestions and comments on earlier versions.\\
This work was financially supported by the SFB-TRR 195 of the German Research Foundation (DFG) in the scope of project A2 (	Generalised Gelfand-Graev representations, unipotent classes and nilpotent orbits).


\begin{thebibliography}{131}
\bibitem{Borel}
{\sc A. Borel}, {\it Linear algebraic groups} (Second enlarged edition), Springer-Verlag, New York, 1991.
\bibitem{Magma}
{\sc Wieb Bosma, John Cannon, and Catherine Playoust}, The Magma algebra system. I. The user language, J. Symbolic Comput., 24 (1997), 235--265.
\bibitem{MagmaHandbook}
{\sc W. Bosma, J. J. Cannon, C. Fieker, A. Steel (eds.)}, Handbook of Magma functions, Version 2.26-4 (2021), 6347 pages.
\bibitem{Bourbaki}
{\sc N. Bourbaki},{\it Groupes et algèbres de Lie, Chapitres 4, 5 et 6}, Masson, Paris, 1981.
\bibitem{Carter72}
{\sc R. W. Carter}, {\it Simple groups of Lie type}, Wiley, New York, 1972; reprinted 1989 as Wiley Classics Library Edition.
\bibitem{Carter85} 
{\sc R. W.~Carter}, {\it Finite groups of Lie type: Conjugacy classes 
and complex characters}, Wiley, New York, 1985.

\bibitem{ClarkePremet}
{\sc M. C. Clarke and A. Premet}, The Hesselink stratification of nullcones
and base change, Invent. math. {\bf 191} (2013), 631--669.
\bibitem{Geck}
{\sc M. Geck}, On the construction of semisimple Lie algebras and Chevalley groups, Proc. Amer. Math. Soc. {\bf 145}:8 (2017), 3233–3247.
\bibitem{Geck20}
{\sc M. Geck}, Generalised Gelfand-Graev representations in bad characteristic? Transform. Groups {\bf 26} (2021), 305--326.
\bibitem{Hesselink}
{\sc W. Hesselink}, The nullcone of the Lie algebra of $G_2$, Indag. Math. {\bf 30}:4 (2019), 623–648.
\bibitem{Humphreys75}
{\sc J. E. Humphreys}, {\it Linear algebraic groups}, Springer-Verlag, New York, 1975.
\bibitem{Humphreys80}
{\sc J. E. Humphreys}, {\it Introduction to Lie algebras and representation theory}. Grad-
uate Texts in Mathematics, 9. Springer-Verlag, New York, Second printing, 1980.
\bibitem{Lang}
{\sc S. Lang}, {\it Algebra} (2nd edition), Addison-Wesley Publishing Company, 1984.
\bibitem{LiebeckSeitz}
{\sc M. W. Liebeck and G. M. Seitz}, {\it Unipotent and nilpotent classes in simple algebraic groups and Lie algebras}, American Mathematical Society, 2012.
\bibitem{Lusztig1}
{\sc G. Lusztig}, Unipotent elements in small characteristic, 
Transform. Groups {\bf 10} (2005), no. 3--4, 449--487.
\bibitem{Lusztig2}
{\sc G. Lusztig}, Unipotent elements in small characteristic II, 
Transform. Groups {\bf 13} (2008), no. 3--4, 773--797.
\bibitem{Lusztig3}
{\sc G. Lusztig}, Unipotent elements in small characteristic III, 
J. Algebra {\bf 329} (2011), 163--189.
\bibitem{Lusztig4}
{\sc G. Lusztig}, Unipotent elements in small characteristic IV, 
Transform. Groups {\bf 15} (2010), no. 4, 921--936.
\bibitem{MalleTesterman}
{\sc G. Malle and D. Testerman}, {\it Linear algebraic groups and finite groups of Lie type}, Cambridge University Press, 2011.
\bibitem{Premet}
{\sc  A. Premet}, Nilpotent orbits in good characteristic and the Kempf–Rousseau theory (Special issue celebrating the 80th birthday of Robert Steinberg). J. Algebra {\bf 260} (2003), 338–366.
\bibitem{Pommerening1}
{\sc K. Pommerening}, Über die unipotenten Klassen reduktiver Gruppen. J. Algebra {\bf 49} (1977), 525--536.
\bibitem{Pommerening2}
{\sc K. Pommerening}, Über die unipotenten Klassen reduktiver Gruppen II. J. Algebra {\bf 65} (1980), 373--398.
\bibitem{Spaltenstein}
{\sc N. Spaltenstein}, Nilpotent classes in Lie algebras of type $F_4$ over fields of characteristic 2, J. Fac. Sci. Univ. Tokyo. Sect. 1 A, Mathematics, {\bf 30} (1984), 517--524.
\bibitem{Springer}
{\sc T. A. Springer}, {\it Linear algebraic groups.} Second edition. Progress in Mathematics, 9. Birkhäuser, Boston, 1998.
\bibitem{Stuhler}
{\sc U. Stuhler}, Unipotente und nilpotente Klassen in einfachen Gruppen und Liealgebren vom Typ $G_2$,
Indag. Math. (Proceedings) {\bf 74} (1971), 365--378.
\bibitem{Xue14}
{\sc T. Xue}, Nilpotent coadjoint orbits in small characteristic.
J. Algebra {\bf 397} (2014), 111--140.
\bibitem{Xue17}
{\sc T. Xue}, Springer correspondence for exceptional Lie algebras and their duals in small characteristic.
J. Lie Theory {\bf 27} (2017), no.2, 357--375.
\end{thebibliography}
\end{document}